\theoremstyle{plain}
\newtheorem{theorem}{Theorem}[section]
\newtheorem{lemma}[theorem]{Lemma}
\newtheorem{corollary}[theorem]{Corollary}
\theoremstyle{definition}
\newtheorem{definition}[theorem]{Definition}
\newtheorem{remark}[theorem]{Remark}
\numberwithin{equation}{section}
\author{Ki-Ahm Lee}
\address{Department of Mathematical Sciences and Research Institute of Mathematics,
	Seoul National University, Seoul 08826, Korea.}
\email{kiahm@snu.ac.kr}
\author{Se-Chan Lee}
\address{Research Institute of Mathematics,
	Seoul National University, Seoul 08826, Korea.}
\email{dltpcks1@snu.ac.kr}
\thanks{ }
\begin{document}

\title{The Wiener criterion for fully nonlinear elliptic equations}

\keywords{Wiener criterion; Fully nonlinear operator; Capacity; Homogeneous solution}
\subjclass[2020]{31B25, 32U20, 35J25}

\begin{abstract}
We study the boundary continuity of solutions to fully nonlinear elliptic equations. We first define a capacity for operators in non-divergence form and derive several capacitary estimates. Secondly, we formulate the Wiener criterion, which characterizes a regular boundary point via potential theory. Our approach utilizes the asymptotic behavior of homogeneous solutions, together with Harnack inequality and the comparison principle.
\end{abstract}

\maketitle

\section{Introduction}
Let $\Omega$ be an open and bounded subset in $\mathbb{R}^n$, $f$ be a boundary data on $\partial \Omega$, and $\mathcal{M}$ be an elliptic operator. For the existence of a solution $u$ (in a suitable sense) to the Dirichlet problem
	\begin{align*}
	\left\{ \begin{array}{ll} 
		\mathcal{M}[u]=0 & \textrm{in $\Omega$}, \\
		u=f & \textrm{on $\partial \Omega$},
	\end{array} \right.
	\end{align*}
one may apply Perron's method. If the solvability of the Dirichlet problem on any balls is known and $\mathcal{M}$ allows a comparison principle, it is rather straightforward to prove that the upper Perron solution $\overline{H}_f$ satisfies $\mathcal{M}[\overline{H}_f]=0$ in $\Omega$. (See Section 2 for details.) Nevertheless, we cannot ensure that the boundary condition $u=f$ on $\partial \Omega$ is satisfied by the upper Perron solution, in general. Instead, we are forced to discover an additional condition for the boundary $\partial \Omega$, which enables us to capture the boundary behavior of $\overline{H}_f$. 

To be precise, we say a boundary point $x_0 \in \partial \Omega$ is \textit{regular} with respect to $\Omega$, if 
\begin{align*}
\lim_{\Omega \ni y \to x_0}\overline{H}_f(y)=f(x_0).
\end{align*}
whenever $f \in C(\partial \Omega)$. One simple characterization of a regular boundary point is to find a \textit{barrier function}; see Section 2 for the precise definition. As a consequence, by constructing proper barrier functions, geometric criteria on $\partial \Omega$ such as an exterior sphere condition or an exterior cone condition have been invoked to guarantee the boundary continuity at $x_0 \in \partial \Omega$ for a variety of elliptic operators. 

On the other hand, Wiener \cite{Wie24a} developed an alternative criterion for a regular boundary point, based on potential theory. Namely, for the Laplacian operator ($\mathcal{M}=\Delta$), $x_0 \in \partial \Omega$ is regular if and only if
the Wiener integral diverges, i.e.
\begin{align*}
\int_0^1 \frac{\mathrm{cap}_2(\overline{B_t(x_0)} \setminus \Omega, B_{2t}(x_0))}{\mathrm{cap}_2(\overline{B_t(x_0)}, B_{2t}(x_0))} \frac{\mathrm{d}t}{t}=\infty,
\end{align*}
where $\mathrm{cap}_2(K, \Omega)$ is defined by the variational capacity of the Laplacian operator. Surprisingly, the Wiener criterion becomes both a sufficient and necessary condition for the regularity of a boundary point. Here the notion of capacity is used to measure the `size' of sets in view of given differential equations. Roughly speaking, $x_0 \in \partial \Omega$ is regular if and only if 
$\Omega^c$ is `thick' enough at $x_0$ in the potential theoretic sense. 

Both linear and nonlinear potential theory have been extensively studied in literature; see \cite{Bre60, HKM93, Hel09, Lan72, MZ97, TW02} and references therein.
Since the main ingredient of potential theory comes from the integration by parts, the theory and corresponding Wiener criterion have been developed mostly for operators in divergence form. Littman, Stampacchia and Weinburger \cite{LSW63} demonstrated the coincidence between the regular points for uniformly elliptic operators $\mathcal{M}=-D_j(a_{ij}D_i)$, where $a_{ij}$ is bounded and measurable, and for the Laplacian operator. For the $p$-Laplacian operator ($\mathcal{M}=\Delta_p, p>1$), Maz'ya \cite{Maz70} verified the sufficiency of the $p$-Wiener criterion, i.e. $x_0 \in \partial \Omega$ is regular for $\Delta_p$ if 
\begin{align*}
\int_0^1 \Big(\frac{\mathrm{cap}_p(\overline{B_t(x_0)} \setminus \Omega, B_{2t}(x_0))}{\mathrm{cap}_p(\overline{B_t(x_0)}, B_{2t}(x_0))}\Big)^{1/(p-1)} \frac{\mathrm{d}t}{t}=\infty.
\end{align*} 
For the converse direction, Lindqvist and Martio \cite{LM85} proved the necessity of the Wiener criterion under the assumption $p>n-1$. Later, Kilpel\"ainen and Mal\'y \cite{KM94} extended this result to any $p>1$, via the Wolff potential estimate. For the other available results on the Wiener criterion, we refer to \cite{AK04} for $p(x)$-Laplacian operators and \cite{LL21} for operators with Orlicz growth. Note that all of these results consider elliptic operators in divergence form.

For elliptic operators in non-divergence form, relatively small amounts of results for the Wiener criterion are known. While the equivalence was obtained for $\mathcal{M}=D_j(a_{ij}D_i)$ with merely measurable coefficients in \cite{LSW63}, Miller \cite{Mil68, Mil70} discovered the non-equivalence with respect to $\mathcal{M}=a_{ij}D_{ij}u$, even if the coefficients $a_{ij}$ are continuous. More precisely, he presented examples of linear operators $\mathcal{M}$ in non-divergence form and domains $\Omega$ such that $x_0 \in \partial \Omega$ is regular for $\mathcal{M}$, but $x_0$ is irregular for $\Delta$, and vice-versa. We also refer \cite{KY17, Leb13}. On the other hand, Bauman \cite{Bau85} developed the Wiener test for $\mathcal{M}=a_{ij}D_{ij}u$ with continuous coefficients $a_{ij}$. He proved that $x_0 \in \partial \Omega$ is regular if and only if
\begin{enumerate}[(i)]
	\item $\mathrm{cap}_{\mathcal{M}}(\{x_0\})>0$, or
	\item $\sum_{j=1}^{\infty}\widetilde{g}(x_0, x_0+2^{-j}e) \cdot \mathrm{cap}_{\mathcal{M}}(\Omega^c \cap (\overline{B_{2^{-j}}(x_0)} \setminus B_{2^{-j-1}}(x_0)))=\infty$.
\end{enumerate} 
Here $\widetilde{g}$ is the normalized Green function and $e$ is a unit vector in $\mathbb{R}^n$.

The goal of this paper is to establish the Wiener criterion for fully nonlinear elliptic operators, by implementing potential theoretic tools.
To illustrate the issues, we consider an \textit{Issacs operator}, i.e. an operator $F$ with the following two properties:
\begin{enumerate}[]
	\item (F1) $F$ is \textit{uniformly elliptic}: there exist positive constants $0<\lambda \leq \Lambda$ such that for any $M \in \mathcal{S}^n$,
	\begin{align*}
	\lambda \|N\| \leq F(M+N)-F(M) \leq \Lambda \|N\|, \quad \forall N \geq 0.
	\end{align*}
	Here we write $N\geq0$ whenever $N$ is a non-negative definite symmetric matrix.
	\item  (F2) $F$ is \textit{positively homogeneous of degree one}:  $F(tM)=tF(M)$ for any $t>0$ and $M \in \mathcal{S}^n$.
\end{enumerate}

Throughout the present paper, we suppose that $F$ satisfies (F1) and (F2), unless otherwise stated. Typical examples of operators satisfying (F1) and (F2) are the Pucci extremal operators $\mathcal{P}_{\lambda, \Lambda}^+$ and $\mathcal{P}_{\lambda, \Lambda}^-$, defined by
\begin{align*}
\mathcal{P}_{\lambda, \Lambda}^+(M)=\Lambda \sum_{e_i>0}e_i+\lambda \sum_{e_i<0}e_i, \quad \mathcal{P}_{\lambda, \Lambda}^-(M)=\Lambda \sum_{e_i<0}e_i+\lambda \sum_{e_i>0}e_i,
\end{align*}
where $e_i=e_i(M)$ are the eigenvalues of $M$. For a fully nonlinear operator $F$ satisfying (F1) and (F2), we define a \textit{dual operator} 
\begin{align*}
\widetilde{F}(M):=-F(-M), \quad \text{for $M \in \mathcal{S}^n$}.
\end{align*}
Then it is obvious that $\widetilde{F}$ also satisfies (F1) and (F2). One important property that $F$ satisfying (F1) and (F2) possesses is the existence of a homogeneous solution $V$:
\begin{lemma}[A homogeneous solution; \cite{ASS11, CL08}] \label{fund}
	There exists a non-constant solution of $F(D^2u)=0$ in $\mathbb{R}^n\setminus\{0\}$ that is bounded below in $B_1$ and bounded above in $\mathbb{R}^n\setminus B_1$. Moreover, the set of all such solutions is of the form $\{aV+b \, | \, a>0, b\in \mathbb{R}\}$, where 
	$V \in C_{\mathrm{loc}}^{1, \gamma}(\mathbb{R}^n \setminus \{0\})$ can be chosen to satisfy one of the following homogeneity relations: for all $t>0$
	\begin{align*}
		V(x)=V(tx)+\log t \quad \textrm{in $\mathbb{R}^n \setminus\{0\}$} \quad \textrm{where $\alpha^{\ast}=0$,}
	\end{align*}
	or
	\begin{align*}
		V(x)=t^{\alpha^{\ast}}V(tx), \ \alpha^{\ast}V>0 \quad \textrm{in $\mathbb{R}^n \setminus\{0\}$},
	\end{align*}
	for some number $\alpha^{\ast} \in (-1, \infty) \setminus\{0\}$ that depends only on $F$ and $n$. We call the number $\alpha^{\ast}=\alpha^{\ast}(F)$ the \textit{scaling exponent} of $F$.
\end{lemma}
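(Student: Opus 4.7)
The plan is to construct $V$ as a limit of normalized Perron solutions on concentric annuli and then to derive the prescribed homogeneity from the scale invariance of $F$. For each integer $k\ge 1$, I would use Perron's method (recalled in Section 2) and the comparison principle to produce a solution $u_k$ of $F(D^2 u_k)=0$ in $B_{2^k}\setminus\overline{B_{2^{-k}}}$ with $u_k=1$ on $\partial B_{2^{-k}}$ and $u_k=0$ on $\partial B_{2^k}$; by (F2), any affine image $V_k=\alpha_k u_k+\beta_k$ with $\alpha_k>0$ is still an $F$-solution. A suitable normalization (for instance, $V_k(2e_1)=0$ together with $\operatorname{osc}_{\partial B_{1/2}\cup\partial B_2}V_k=1$) renders the sequence non-degenerate at unit scale, and Harnack's inequality applied along a chain of dyadic intermediate annuli yields a uniform bound $\|V_k\|_{L^{\infty}(B_R\setminus B_{1/R})}\le C(R)$ for every $R>1$. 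Interior $C^{1,\gamma}$ regularity for fully nonlinear uniformly elliptic equations then promotes this to uniform $C^{1,\gamma}_{\mathrm{loc}}(\mathbb{R}^n\setminus\{0\})$ bounds, and Arzel\`a--Ascoli extracts, along a subsequence, a non-constant limit $V\in C^{1,\gamma}_{\mathrm{loc}}(\mathbb{R}^n\setminus\{0\})$ solving $F(D^2V)=0$.

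Next, I would exploit the scale invariance to derive the homogeneity. For $t>0$, the function $x\mapsto V(tx)$ again solves the equation by (F2) and arises as the same limit after shifting the annulus index in the construction; this forces the existence of constants $a(t)>0$ and $b(t)\in\mathbb{R}$ with
\begin{equation*}
V(tx)=a(t)V(x)+b(t), \qquad x\in\mathbb{R}^n\setminus\{0\}.
\end{equation*}
Writing $V(tsx)$ in two ways yields the cocycle relations $a(ts)=a(t)a(s)$ and $b(ts)=a(t)b(s)+b(t)$, and continuity of $a$ gives $a(t)=t^{-\alpha^{\ast}}$ for a unique $\alpha^{\ast}\in\mathbb{R}$. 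If $\alpha^{\ast}\neq 0$, an additive shift of $V$ eliminates $b$ and produces the power law $V(tx)=t^{-\alpha^{\ast}}V(x)$; if $\alpha^{\ast}=0$, solving the cocycle for $b$ forces $b(t)=c\log t$ for some constant $c$, after which rescaling $V$ by $c$ normalizes $c=1$. The one-sided boundedness of $V$ selects the sign so that $\alpha^{\ast}V>0$, while the constraint $\alpha^{\ast}>-1$ follows by comparing $V$ with a suitable radial Pucci barrier on a large annulus to rule out too rapid decay.

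The main obstacle is the affine rigidity $V(tx)=a(t)V(x)+b(t)$: because $F$ is genuinely nonlinear, the difference of two $F$-solutions is not an $F$-solution, so a direct Liouville-type argument is unavailable. The technical heart of \cite{ASS11, CL08} is to show that the $F$-solutions on $\mathbb{R}^n\setminus\{0\}$ bounded below in $B_1$ and above outside $B_1$ form precisely a two-parameter family $\{aV+b : a>0,\, b\in\mathbb{R}\}$; this is done by matching two boundary levels of any candidate solution with an affine transform of $V$ on a large annulus and invoking the comparison principle to propagate the identity globally. The asserted uniqueness of the family $\{aV+b\}$, and the status of $\alpha^{\ast}$ as an intrinsic quantity depending only on $F$ and $n$, follow from the same comparison argument.
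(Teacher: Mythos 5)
The paper does not prove this lemma: it is quoted directly from \cite{ASS11, CL08}, so there is no internal argument to compare yours against. Your outline of the construction (Perron solutions on expanding annuli, normalization, Harnack chains, interior $C^{1,\gamma}$ estimates, Arzel\`a--Ascoli compactness) is the standard route and is consistent with how the fundamental solution is built in those references.

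As a proof, however, the proposal has a genuine gap which you partly acknowledge and then partly paper over. The affine rigidity $V(tx)=a(t)V(x)+b(t)$ does not follow from the remark that $V(t\cdot)$ ``arises as the same limit after shifting the annulus index'': the limit $V$ is extracted only along a subsequence, a general dilation $t>0$ does not map your dyadic annuli into themselves, and identifying $V(t\cdot)$ with an affine image of $V$ already presupposes that the admissible solutions form exactly the two-parameter family $\{aV+b\}$. That classification is the theorem, and it cannot be obtained by ``matching two boundary levels of any candidate solution with an affine transform of $V$ on a large annulus and invoking the comparison principle'': comparison yields only one-sided inequalities $u\le aV+b$ and $u\ge a'V+b'$ on the annulus, and these do not collapse to an exact affine identity for a nonlinear $F$, precisely because differences of solutions are not solutions. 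In \cite{ASS11} this step is the main Liouville-type theorem, proved by a sliding/deformation argument combined with the strong maximum principle and Harnack-type estimates, not by a single comparison; as written, your argument is circular (the homogeneity is deduced from the classification, not the other way around). A secondary inaccuracy: the restriction $\alpha^{\ast}>-1$ is not about ruling out ``too rapid decay'' on a large annulus. If $\alpha^{\ast}\le-1$, the putative solution is positively homogeneous of degree $-\alpha^{\ast}\ge 1$, hence vanishes at least linearly at the origin; the singularity there is then removable and the resulting entire solution is excluded by a Liouville argument, so the obstruction lives at the origin, not at infinity. The cocycle computation $a(ts)=a(t)a(s)$, $b(ts)=a(t)b(s)+b(t)$ and the normalization of $b$ are fine once the affine rigidity is granted.
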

Now we are ready to state our first main theorem, namely, the sufficiency of the Wiener criterion:
\begin{theorem}[The sufficiency of the Wiener crietrion]\label{wiener}
	If
	\begin{align*}
	\int_0^{1} {\mathrm{cap}_{{F}} (\overline{B_t(x_0)} \setminus \Omega, B_{2t}(x_0))}\frac{\mathrm{d}t}{t}=\infty
	\end{align*}
	and
	\begin{align*}
	\int_0^{1} {\mathrm{cap}_{\widetilde{F}} (\overline{B_t(x_0)} \setminus \Omega, B_{2t}(x_0))}\frac{\mathrm{d}t}{t}=\infty,
	\end{align*}
	then the boundary point $x_0 \in \partial \Omega$ is ($F$-)regular.
\end{theorem}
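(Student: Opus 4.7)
The plan is to construct both an upper and a lower $F$-barrier at $x_0$: an $F$-supersolution will give $\limsup_{y\to x_0}\overline{H}_f(y)\leq f(x_0)$, while an $F$-subsolution (equivalently, an $\widetilde{F}$-supersolution) will yield the matching liminf bound. Since the two hypotheses are symmetric under $F\leftrightarrow\widetilde{F}$ and the two constructions are parallel, I concentrate on the $F$-side and on the first integral hypothesis; the $\widetilde{F}$-side follows by reading the same argument with $\widetilde{F}$ in place of $F$. After subtracting $f(x_0)$ and fixing $\varepsilon>0$, the goal reduces to dominating the $F$-harmonic function with the relevant boundary data by $\varepsilon$ on a sufficiently small neighborhood of $x_0$.

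Set $r_j:=2^{-j}$ and $K_j:=\overline{B_{r_j}(x_0)}\setminus\Omega$. I would first establish the pointwise capacitary lower bound
\begin{align*}
\phi_j(y)\;\geq\; c_0\,\mathrm{cap}_F\!\bigl(K_j,\,B_{2r_j}(x_0)\bigr),\qquad y\in\partial B_{r_j/2}(x_0),
\end{align*}
for a constant $c_0=c_0(n,\lambda,\Lambda)>0$, where $\phi_j$ is the $F$-capacitary potential of $K_j$ in $B_{2r_j}(x_0)$ (i.e.\ the viscosity solution of $F[\phi_j]=0$ on $B_{2r_j}\setminus K_j$ with $\phi_j=1$ on $K_j$ and $\phi_j=0$ on $\partial B_{2r_j}$). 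This is the non-divergence analogue of the classical potential-theoretic identity $\phi\asymp g\cdot\mathrm{cap}$, and I would derive it by comparison with the homogeneous solution $V$ of \Cref{fund} and the interior Harnack inequality.

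The heart of the proof is then a one-step Maz'ya-type oscillation decay. For any non-negative $F$-harmonic $u$ in $\Omega\cap B_{r_j}(x_0)$ vanishing on $\partial\Omega\cap B_{r_j}$ and bounded by $M$ on $B_{r_j}$, the function $M(1-\phi_j)$ is an $F$-supersolution majorizing $u$ on $\partial(B_{r_j}\cap\Omega)$; the comparison principle together with the lower bound above gives
\begin{align*}
\sup_{\Omega\cap B_{r_{j+1}}(x_0)} u \;\leq\;\bigl(1 - c_0\,\mathrm{cap}_F(K_j,B_{2r_j}(x_0))\bigr)\sup_{\Omega\cap B_{r_j}(x_0)} u .
\end{align*}
Iterating across scales yields $\sup_{\Omega\cap B_{r_k}} u \leq \prod_{j=0}^{k-1}\bigl(1-c_0\,\mathrm{cap}_F(K_j,B_{2r_j})\bigr)$, and a routine dyadic estimate converts the integral hypothesis into $\sum_j \mathrm{cap}_F(K_j,B_{2r_j})=\infty$, whence the product tends to $0$. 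Applied to $(\overline{H}_f-\varepsilon)_+$, which is an $F$-subsolution by a standard Perron-family argument, this gives $\limsup_{y\to x_0}\overline{H}_f(y)\leq \varepsilon$; letting $\varepsilon\to 0$ produces the upper barrier, and the parallel $\widetilde{F}$-argument produces the lower one.

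The main obstacle is the non-divergence derivation of the capacitary lower bound on $\phi_j$: without an energy identity one is forced to use the fundamental solution $V$, whose scaling depends on the exponent $\alpha^{\ast}$ from \Cref{fund}, and the three sub-cases $\alpha^{\ast}>0$, $\alpha^{\ast}\in(-1,0)$, and the logarithmic case $\alpha^{\ast}=0$ each demand a slightly different normalization of $\mathrm{cap}_F$ and a slightly different comparison construction. A secondary technicality is ensuring that $\mathrm{cap}_F(K_j,B_{2r_j})$ is uniformly small, so that the factor $(1-c_0\,\mathrm{cap}_F)$ in the iteration lies in $(0,1)$ and the telescoping product is meaningful; this uniform smallness should follow from the definition of $\mathrm{cap}_F$ but has to be tracked carefully through the iteration.
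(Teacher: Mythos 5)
Your overall architecture is the same as the paper's: your pointwise bound $\phi_j\geq c_0\,\mathrm{cap}$ is Lemma \ref{est}, your one-step decay and dyadic product is Lemma \ref{potential} (the paper writes $\prod(1-ca_j)\leq\exp(-c\sum a_j)$ via $e^t\geq 1+t$), and the transfer to the Perron solution is Lemma \ref{keylem}. However, there is a genuine wiring error in the duality bookkeeping, which is precisely the point the paper flags as the new difficulty in the fully nonlinear setting. You take $\phi_j$ to be the \emph{$F$}-capacitary potential, i.e.\ $F(D^2\phi_j)=0$ off $K_j$, and claim $M(1-\phi_j)$ is an $F$-supersolution. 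But $F(D^2(M(1-\phi_j)))=MF(-D^2\phi_j)=-M\widetilde F(D^2\phi_j)$, which has no sign when $F\neq\widetilde F$: an $F$-solution is in general neither a $\widetilde F$-sub- nor supersolution (try $F=\mathcal P^+_{\lambda,\Lambda}$). The upper barrier dominating $\overline H_f$ must be built from the \emph{$\widetilde F$}-capacitary potential (as in Lemma \ref{keylem}, where $1+\varepsilon-\hat R^{1,\widetilde F}$ is $F$-superharmonic), and its decay at $x_0$ is controlled by $\mathrm{cap}_{\widetilde F}$, i.e.\ by the \emph{second} integral hypothesis; symmetrically the lower bound on $\underline H_f$ uses the $F$-potential and the first hypothesis. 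Since both integrals are assumed to diverge the theorem survives the relabeling, but as written your key comparison step is false for general non-self-dual $F$, and "the two constructions are parallel" hides exactly the cross-pairing that makes two Wiener integrals necessary.

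A second, smaller gap: you apply the one-step decay directly to $(\overline H_f-\varepsilon)_+$, which requires the boundary inequality $\limsup_{y\to x}(\overline H_f(y)-\varepsilon)_+\leq M(1-\phi_j)(x)=0$ for $x\in\partial\Omega\cap B_{r_j}$ before invoking the comparison principle --- but the boundary behavior of $\overline H_f$ on $\partial\Omega$ is exactly what is being proved, so a naive comparison is circular. The standard repair (and the paper's route) is not to compare with $\overline H_f$ at all, but to paste the supersolution with a constant outside $2B$ and verify it belongs to $\mathcal U_f(\Omega)$, so that $\overline H_f$ lies below it by the definition of the upper Perron solution; your phrase "a standard Perron-family argument" needs to be unpacked into exactly this. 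With these two repairs your proof becomes essentially the paper's proof, with the cosmetic difference that you iterate on the solution while the paper iterates on the balayages $\hat R^{\psi_i}(B_i)$ and only then invokes Lemma \ref{keylem}.
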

We remark that the Wiener integral is again defined in terms of a capacity, but the definition of a $F$-capacity is quite different from the variational capacity for the Laplacian case; see Section 3 for details. Furthermore, as a corollary of Theorem \ref{wiener}, we will derive the quantitative estimate for a modulus of continuity at a regular boundary point (Lemma \ref{modulus}), and suggest another geometric condition, called an exterior corkscrew condition (Corollary \ref{corkscrew}).

Our second main theorem is concerned with the necessity of the Wiener criterion. We propose a partial result on the necessary condition, i.e. exploiting the additional structure of $F$, we show that the Wiener integral at $x_0 \in \partial \Omega$ must diverge whenever $x_0$ is a regular boundary point.
\begin{theorem}[The necessity of the Wiener criterion]\label{wienernec}
	Suppose that $F$ is concave and $\alpha^{\ast}(F)<1$.
	If a boundary point $x_0 \in \partial \Omega$ is regular, then 
	\begin{align*}
	\int_0^{1} {\mathrm{cap}_{{F}} (\overline{B_t(x_0)} \setminus \Omega, B_{2t}(x_0))}\frac{\mathrm{d}t}{t}=\infty.
	\end{align*}
\end{theorem}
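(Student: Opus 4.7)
The plan is to argue by contraposition: assume the $F$-Wiener integral at $x_0$ converges and construct continuous boundary data $f$ for which the Perron upper solution fails to attain $f(x_0)$ at $x_0$. Without loss of generality, translate so that $x_0 = 0$, and write $r_j = 2^{-j}$, $A_j = B_{r_j} \setminus \overline{B_{r_{j+1}}}$, and $K_j = \overline{A_j} \cap (\overline{B_{r_j}} \setminus \Omega)$. For each $j$ let $w_j$ denote the capacitary $F$-function of $(K_j, B_{4r_j})$ from Section 3, so $0 \leq w_j \leq 1$, $w_j = 1$ on $K_j$, and $w_j = 0$ on $\partial B_{4r_j}$. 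The first quantitative input, proved by comparing $w_j$ with a rescaled translate of the homogeneous solution $V$ from Lemma \ref{fund} on the annular region surrounding $K_j$, is the pointwise estimate
\begin{equation*}
w_j(0) \leq C \, \mathrm{cap}_F(K_j, B_{4r_j}), \qquad C = C(F, n).
\end{equation*}
The assumption $\alpha^*(F) < 1$ enters here: it caps the singularity of $V$ at the origin so that $V$, normalized on $\partial B_{4r_j}$, remains an admissible majorant for $w_j$ at $0$ with a $j$-independent constant.

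The second step aggregates the $w_j$ into an $F$-supersolution $\Phi$ on $\Omega \cap B_\rho$ for some small $\rho$. Since $F$ is concave and positively $1$-homogeneous it is superadditive, so sums of $F$-subsolutions remain $F$-subsolutions in the viscosity sense, and dually the convex operator $\widetilde{F}$ is subadditive, so sums of $\widetilde{F}$-supersolutions remain $\widetilde{F}$-supersolutions. Passing through $\widetilde{F}$ and invoking the comparison principle against the constant $1$ on $\Omega^c$, I produce an $F$-supersolution $\Phi$ with $\Phi \equiv 1$ on $\Omega^c \cap B_\rho$ and
\begin{equation*}
\Phi(0) \leq C \sum_{j : r_j < \rho} \mathrm{cap}_F(K_j, B_{4r_j}) \leq C' \int_0^\rho \mathrm{cap}_F(\overline{B_t} \setminus \Omega, B_{2t}) \, \frac{\mathrm{d}t}{t} < 1,
\end{equation*}
the last inequality holding for $\rho$ sufficiently small by the assumed convergence of the Wiener integral, and the middle step being a dyadic-to-integral comparison using monotonicity of $\mathrm{cap}_F$ in its first argument.

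For the final step, take $f \in C(\partial\Omega)$ with $f(0) = 1$, $0 \leq f \leq 1$, and $f \equiv 0$ outside $B_{\rho/2}$. After extending $\Phi$ by a suitable cutoff so that it dominates $f$ on $\partial\Omega$ globally, $\Phi$ lies in the upper Perron class for $f$, so $\overline{H}_f \leq \Phi$ throughout $\Omega$. Consequently
\begin{equation*}
\limsup_{y \to 0,\, y \in \Omega} \overline{H}_f(y) \leq \Phi(0) < 1 = f(0),
\end{equation*}
contradicting the regularity of $0$ and completing the contrapositive.

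\textbf{Main obstacle.} The most delicate step is the aggregation: concavity of $F$ permits summation only at the $F$-subsolution level, while the barrier argument requires a genuine $F$-supersolution. I plan to effect this upgrade through the dual $\widetilde{F}$, which is convex under our assumption, combined with capacitary comparison and truncation; this mirrors, in reverse, the trick that forces the sufficiency direction (Theorem \ref{wiener}) to use both $F$ and $\widetilde{F}$ Wiener integrals. A second essential point is the scale-invariant comparison in Step 1, which only closes in the regime $\alpha^*(F) < 1$: for $\alpha^*(F) \geq 1$ the homogeneous solution $V$ becomes too singular at the origin for this comparison to retain $j$-independent constants, and a genuinely different approach — presumably a Wolff-potential-type construction not directly available in the non-divergence setting — would be required to remove that hypothesis.
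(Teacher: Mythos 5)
Your proposal follows the classical ``sum the annular capacitary potentials of $\Omega^c$'' route, which is genuinely different from the paper's argument; unfortunately it has two gaps, one of which is fatal under the stated hypotheses. The fatal one is the aggregation step. For a positively $1$-homogeneous \emph{concave} $F$ one has superadditivity, $F(M+N)\geq F(M)+F(N)$, so sums of $F$-\emph{subsolutions} are $F$-subsolutions, but sums of $F$-\emph{supersolutions} are not $F$-supersolutions --- the inequality points the wrong way. Your barrier $\Phi$ must be an $F$-supersolution (it is to sit in the upper class $\mathcal{U}_f$), and the $w_j$ are $F$-superharmonic, so they cannot be summed. The detour through $\widetilde F$ does not repair this: since $\widetilde F$ is convex, sums of $\widetilde F$-supersolutions are indeed $\widetilde F$-supersolutions, but an $\widetilde F$-supersolution is not an $F$-supersolution (its negative is an $F$-subsolution, i.e.\ a \emph{lower} barrier), and if you instead sum the $\widetilde F$-capacitary potentials you end up controlling $\mathrm{cap}_{\widetilde F}$, which is not what the hypothesis bounds. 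This is precisely why the concavity assumption in \Cref{wienernec} is used in the paper to build \emph{lower} barriers (capacity lower bounds for thin sets), never upper ones. The second gap is the estimate $w_j(0)\leq C\,\mathrm{cap}_F(K_j,B_{4r_j})$: comparison with a single rescaled translate of $V$ cannot dominate the potential of an arbitrary compact subset of the annulus, and in the non-divergence setting there is no Green representation to fall back on. (Such an estimate can in fact be extracted from the level-set argument of \Cref{linecap}--\Cref{symm}, but that machinery is exactly the paper's alternative route, and it is where $\alpha^{\ast}<1$ really enters --- not, as you suggest, through the singularity of $V$ at the origin.)

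For contrast, the paper argues as follows. Assuming the Wiener integral converges, it picks dyadic radii $r_i$, replaces $\overline{B_{r_i}}\setminus\Omega$ by slightly larger regular sets $E_i$ of capacity $b_i$ with $\sum_i b_i$ small, and uses the spherical symmetrization lemma (\Cref{symm}, where concavity and $\alpha^{\ast}<1$ guarantee that any compact set meeting every sphere $S(t)$, $t\in(ar,br)$, has capacity bounded below) in the contrapositive form of \Cref{linecap}: the sublevel set $\{u_i<c_1 b_i\}$ of the capacity potential $u_i$ of $E_i$ must contain an entire sphere $S(t_i)\subset\Omega$. These spheres serve as nested barriers: for boundary data $f$ equal to $1$ on $\partial\Omega\cap B_{t_2}$ and $0$ elsewhere, an iterated comparison across the annuli between $S(t_i)$ and $S(t_{i+1})$ shows that the normalized lower Perron solution gains at most $\gamma_i=c_1b_i$ per scale, so $\liminf_{x\to x_0}\underline{H}_f(x)\leq M+(1-M)\sum_i\gamma_i<1=f(x_0)$, contradicting regularity. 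If you want to salvage your approach you would essentially have to prove a Wolff-type upper bound on the capacity potential and a substitute for superadditivity of supersolutions, neither of which is available here; the sphere-barrier iteration is the mechanism that avoids both.
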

Note that the assumption $\alpha^{\ast}(F)<1$ in the fully nonlinear case corresponds to the assumption $p>n-1$ in the $p$-Laplacian case, \cite{LM85}. The underlying idea for both cases is to utilize the non-zero capacity of a line segment (or a set of Hausdorff dimension 1). Further comments on
this assumption can be found in Section 5.

In this paper, the main difficulty arises from the inherent lack of divergence structure; we cannot define a variational capacity by means of an energy minimizer, and moreover, we cannot employ integral estimates involving Sobolev inequality and Poincar\'e inequality. Instead, we will develop potential theory with non-divergence structure by the construction of appropriate barrier functions using the homogeneous solution, and by the application of the comparison principle and Harnack inequality. In short, our strategy is to capture the local boundary behavior of the upper Perron solution $\overline{H}_f$ in terms of newly defined capacity $\mathrm{cap}_F(K, B)$ and the capacity potential (or the balayage) $\hat{R}_K^1(B)$, using prescribed tools. Heuristically, the non-variational capacity measures the `height' of the $F$-solution with the boundary value $0$ on $\partial B$ and $1$ on $\partial K$, while the variational capacity measures the `energy' of such function. We emphasize that although our notion of capacity does not satisfy the subadditive property in general, it was still able to recover certain properties of the variational capacity.

Finally, we would like to point out that the dual operator $\widetilde{F}$ is different from $F$, for general $F$. Thus, even though $u$ is an $F$-supersolution, we cannot guarantee $-u$ is an $F$-subsolution. Moreover, a similar feature is found in the growth rate of the homogeneous solution for $F$; two growth rates of an upward-pointing homogeneous solution and a downward-pointing one can be different. This phenomenon naturally leads us 
\begin{enumerate}[(i)]
	\item to describe the local behavior of both the upper Perron solution $\overline{H}_f$ and the lower Perron solution $\underline{H}_f$ for regularity at $x_0 \in \partial \Omega$;
	\item to construct two (upper/lower) barrier functions when characterizing a regular boundary point;
	\item to display two different Wiener integrals in our main theorem,
\end{enumerate}
which differ from the previous results that appeared in \cite{Bau85, KM94, Wie24a}.

\vspace{0.5cm}
\textbf{Outline.} This paper is organized as follows. In Section 2, we summarize the terminology and preliminary results for our main theorems. In short, we introduce $F$-superharmonic functions and Poisson modification and then perform Perron's method. In Section 3, we first define a balayage and a capacity for uniformly elliptic operators in non-divergence form. Then we prove several capacitary estimates by constructing auxiliary functions and provide the characterization of a regular boundary point via balayage. Section 4 consists of potential theoretic estimates for the capacity potential. Then we prove the sufficiency of the Wiener criterion and several corollaries. Finally, Section 5 is devoted to the proof of the (partial) necessity of the Wiener criterion.

\section{Perron's method}
\subsection{$F$-supersolutions and $F$-superharmonic functions}
In this subsection, we only require the condition (F1) for an operator $F$. To illustrate Perron's method precisely, we start with two different notions of solutions for a uniformly elliptic operator $F$: $F$-solutions and $F$-harmonic functions. Indeed, we will prove that these two notions coincide.

\begin{definition}[$F$-supersolution]
	A lower semi-continuous [resp. upper semi-continuous] function $u$ in $\Omega$ is a \textit{(viscosity) $F$-supersolution} [resp. \textit{(viscosity) $F$-subsolution}] in $\Omega$, when the following condition holds:
	
	if $x_0 \in \Omega$, $\varphi \in C^2(\Omega)$ and $u-\varphi$ has a local minimum at $x_0$, then 
	\begin{align*}
	F(D^2\varphi(x_0)) \leq 0.
	\end{align*} 
	[resp. if $u-\varphi$ has a local maximum at $x_0$, then $F(D^2\varphi(x_0)) \geq 0$.]
	
	 We say that $u \in C({\Omega})$ a \textit{(viscosity) $F$-solution} if $u$ is both an $F$-subsolution and an $F$-supersolution.  
\end{definition}

\begin{lemma}\label{lsc}
	Suppose that a lower semi-continuous function $u$ is an $F$-supersolution in $\Omega$. Then 
	\begin{align*}
	u(x)=\liminf_{\Omega \ni y \to x}u(y) \quad \text{for any $x \in \Omega$}.
	\end{align*}
\end{lemma}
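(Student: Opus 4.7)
Since $u$ is lower semi-continuous, the inequality $u(x) \leq \liminf_{\Omega \ni y \to x} u(y)$ is automatic, so the real task is to rule out strict inequality at any interior point $x_0$. My plan is a standard viscosity argument by contradiction: I would manufacture a smooth test function touching $u$ from below at $x_0$ on whose Hessian $F$ is forced to be strictly positive, contradicting the $F$-supersolution property.

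Assume, for contradiction, that $u(x_0) < \liminf_{\Omega \ni y \to x_0} u(y)$ at some $x_0 \in \Omega$. Then I can pick $\epsilon, r > 0$ with $\overline{B_r(x_0)} \subset \Omega$ and
\begin{align*}
u(y) \geq u(x_0) + \epsilon \quad \text{for all } y \in B_r(x_0) \setminus \{x_0\}.
\end{align*}
The test function I would use is the upward paraboloid
\begin{align*}
\varphi(y) := u(x_0) + b\,|y - x_0|^2,
\end{align*}
with $b > 0$ still to be chosen. Applying (F1) with $M = 0$ and $N = 2bI \geq 0$ yields $F(2bI) \geq F(0) + 2\lambda b$, so fixing $b > \max\{0,\,-F(0)/(2\lambda)\}$ guarantees $F(D^2\varphi(x_0)) > 0$. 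Then, after shrinking the radius to $r_1 := \min\{r, \sqrt{\epsilon/b}\}$, the trivial estimate
\begin{align*}
(u - \varphi)(y) \geq \epsilon - b\,r_1^2 \geq 0 = (u - \varphi)(x_0) \quad \text{for } y \in B_{r_1}(x_0) \setminus \{x_0\}
\end{align*}
shows that $u - \varphi$ has a local minimum at $x_0$. The viscosity supersolution condition then forces $F(2bI) \leq 0$, which contradicts the previous display.

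The only slightly delicate point is that at this stage only the ellipticity hypothesis (F1) is in force; I cannot invoke (F2) or the normalization $F(0) = 0$ that it would produce. For this reason I keep $b$ free and choose it large enough that the one-sided ellipticity bound alone drives $F(2bI)$ strictly above zero. Beyond this bit of book-keeping the argument is entirely routine.
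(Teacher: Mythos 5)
Your proof is correct and follows essentially the same contradiction argument as the paper: the strict gap at $x_0$ forces every $C^2$ function to be a valid test function there, which is incompatible with the supersolution inequality. The paper leaves the final impossibility implicit, whereas you make it explicit by choosing the paraboloid $u(x_0)+b|y-x_0|^2$ with $b$ large enough that (F1) alone gives $F(2bI)>0$; this is a harmless (indeed helpful) elaboration, not a different route.
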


\begin{proof}
	We argue by contradiction: suppose that 
	\begin{align*}
	u(x_0) < \liminf_{\Omega \ni y \to x}u(y) \quad \text{for some $x_0 \in \Omega$.}
	\end{align*}
	Then for any $\varphi \in C^2(\Omega)$, it follows that $u-\varphi$ has a local minimum at $x_0$ and so we can test this function. Therefore, 
	\begin{align*}
	F(D^2\varphi(x_0)) \leq 0 \quad \text{for any $\varphi \in C^2(\Omega)$},
	\end{align*}
	which is impossible. 
\end{proof}

\begin{theorem}\label{stability}
	\begin{enumerate}[(i)]
		\item  (Stability) Let $\{u_k\}_{k \geq 1} \subset C(\Omega)$ be a sequence of $F$-solutions in $\Omega$. Assume that $u_k$ converges uniformly in every compact set of $\Omega$ to $u$. Then $u$ is an $F$-solution in $\Omega$.
		
		\item  (Compactness) Suppose that $\{u_k\}_{k \geq 1} \subset C(\Omega)$ is a locally uniformly bounded sequence of $F$-solutions in $\Omega$. Then it has a subsequence that converges locally uniformly in $\Omega$ to an $F$-solution.
	\end{enumerate}
\end{theorem}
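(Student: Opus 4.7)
The plan is to treat the two items separately, deriving (ii) from a compactness input together with (i). By the symmetry between sub- and supersolution conditions in the definition, for (i) it suffices to show that $u$ is an $F$-subsolution (the supersolution direction is identical up to signs).

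For (i), I would run the standard viscosity stability argument. Fix $\varphi\in C^2(\Omega)$ and suppose $u-\varphi$ attains a local maximum at $x_0\in\Omega$. First I reduce to a strict local maximum by replacing $\varphi$ with $\varphi_\epsilon(x):=\varphi(x)+\epsilon|x-x_0|^2$, so that on some closed ball $\overline{B_r(x_0)}\subset\Omega$ the function $u-\varphi_\epsilon$ attains its maximum only at $x_0$. Let $x_k$ be a maximizer of $u_k-\varphi_\epsilon$ over $\overline{B_r(x_0)}$. Using the uniform convergence $u_k\to u$ on $\overline{B_r(x_0)}$, continuity of $\varphi_\epsilon$, and strictness of the max at $x_0$, a standard elementary argument forces $x_k\to x_0$; in particular $x_k\in B_r(x_0)$ for all large $k$, so $x_k$ is an interior local maximum of $u_k-\varphi_\epsilon$. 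The subsolution property of $u_k$ gives $F(D^2\varphi_\epsilon(x_k))\geq 0$. Condition (F1) implies that $F$ is Lipschitz continuous on $\mathcal{S}^n$ (with constant controlled by $\Lambda$), so passing to the limit $k\to\infty$ and then $\epsilon\to 0$ yields $F(D^2\varphi(x_0))\geq 0$, as required.

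For (ii), the essential input is the interior Hölder estimate of Krylov-Safonov / Caffarelli type for viscosity solutions of $F(D^2u)=0$ under hypothesis (F1): for any subdomain $\Omega'\Subset\Omega$ there exist $\alpha\in(0,1)$ and $C>0$ depending only on $n,\lambda,\Lambda,\Omega',\Omega$ such that
\begin{align*}
\|u_k\|_{C^{0,\alpha}(\Omega')}\leq C\,\|u_k\|_{L^\infty(\Omega)}.
\end{align*}
Combining this with the local uniform boundedness of $\{u_k\}$ gives that $\{u_k\}$ is equicontinuous and uniformly bounded on each compact subset of $\Omega$. Exhaust $\Omega$ by an increasing sequence $\Omega_1\Subset\Omega_2\Subset\cdots$, apply Arzelà-Ascoli on each $\Omega_m$, and extract a diagonal subsequence converging locally uniformly on $\Omega$ to some $u\in C(\Omega)$. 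Applying part (i) to this subsequence shows that $u$ is an $F$-solution.

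The main obstacle is less conceptual than bibliographic: the compactness in (ii) relies crucially on the Krylov-Safonov-Caffarelli Hölder estimate for fully nonlinear uniformly elliptic equations, a nontrivial result which we treat as a black box. By contrast, (i) is a routine test-function calculation whose only subtlety is the strict-maximum perturbation trick and the passage of limits through $F$ justified by its Lipschitz continuity.
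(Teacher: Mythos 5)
Your proposal is correct and is the standard argument. Note that the paper itself gives no proof of this theorem at all --- it is stated as a known fact (it is essentially Propositions 4.10--4.13 of \cite{CC95}) --- so there is no in-paper argument to compare against. Your part (i) is the usual viscosity stability proof (strict-maximum perturbation, convergence of maximizers, Lipschitz continuity of $F$ from (F1)), and part (ii) correctly combines the Krylov--Safonov/Caffarelli interior H\"older estimate with Arzel\`a--Ascoli and part (i); both steps are sound as written.
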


\begin{theorem}[Harnack convergence theorem] \label{harnackct}
	Let $\{u_k\}_{k \geq 1} \subset C(\Omega)$ be an increasing sequence of $F$-solutions in $\Omega$. Then the function $u=\lim_{k \to \infty}u_k$ is either an $F$-solution or identically $+\infty$ in $\Omega$.
\end{theorem}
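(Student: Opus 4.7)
The plan is to mimic the classical argument for the Laplacian, with Harnack inequality and stability as the two main engines. Set $u(x) := \lim_{k \to \infty} u_k(x)$, which is well defined in $(-\infty, +\infty]$ by monotonicity. Fix a reference index and consider the non-negative differences $v_k := u_k - u_1 \geq 0$. A standard viscosity calculation based on the uniform ellipticity of $F$ (invoking the theorem on sums to handle the two test functions simultaneously) shows that the difference of any two $F$-solutions lies in the Pucci class $S^{\ast}(\lambda,\Lambda)$, i.e.\ $\mathcal{P}_{\lambda,\Lambda}^-(D^2 v_k) \leq 0 \leq \mathcal{P}_{\lambda,\Lambda}^+(D^2 v_k)$ in the viscosity sense. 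In particular, each $v_k$ is a non-negative function in $S^{\ast}(\lambda,\Lambda)$, so the Krylov--Safonov Harnack inequality applies on every ball $B_{2R}(x_0) \Subset \Omega$ with a constant $C=C(n,\lambda,\Lambda)$ independent of $k$.

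Next I perform an open/closed dichotomy. Let $E := \{x \in \Omega : u(x) < \infty\}$. If $x_0 \in E$ and $B_{2R}(x_0) \Subset \Omega$, Harnack gives $\sup_{B_R(x_0)} v_k \leq C\, v_k(x_0) \leq C\bigl(u(x_0) - u_1(x_0)\bigr)$ uniformly in $k$; letting $k\to\infty$ and using the local boundedness of the continuous function $u_1$, I conclude $u$ is bounded on $B_R(x_0)$, so $B_R(x_0)\subset E$. Conversely, if $x_0 \in \Omega\setminus E$, Harnack yields $v_k(y) \geq v_k(x_0)/C$ for every $y \in B_R(x_0)$; since $v_k(x_0) \to \infty$, the same holds for every $y$, and $B_R(x_0)\subset \Omega\setminus E$. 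Hence both $E$ and $\Omega\setminus E$ are open, and by connectedness of $\Omega$ (applied component by component if necessary) one of the two alternatives $u\equiv +\infty$ or $u<+\infty$ everywhere holds.

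It remains to treat the finite case. The Harnack estimate above shows that $\{u_k\}$ is locally uniformly bounded on $\Omega$, so by the compactness part of Theorem \ref{stability} there is a subsequence $u_{k_j}$ converging locally uniformly to some $F$-solution $\widetilde{u}$. Pointwise monotone convergence forces $\widetilde{u}=u$, and in particular $u$ is continuous. Because the sequence $\{u_k\}$ is monotone increasing with continuous limit, Dini's theorem upgrades the subsequential convergence to local uniform convergence of the full sequence $u_k \to u$. Finally, the stability part of Theorem \ref{stability} identifies $u$ as an $F$-solution, completing the proof.

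The main obstacle is the very first step: carefully establishing that the difference of two viscosity $F$-solutions belongs to $S^{\ast}(\lambda,\Lambda)$, which is not a pointwise consequence of ellipticity and genuinely requires the Jensen--Ishii machinery. Once this is in hand, Harnack inequality plus the routine topological and stability arguments close the proof cleanly.
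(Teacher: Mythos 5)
Your argument is correct and follows essentially the same route as the paper's proof: Harnack inequality gives the dichotomy between local boundedness and blow-up everywhere, and then compactness/stability (i.e.\ interior $C^{\alpha}$ estimates plus Arzel\`a--Ascoli, as packaged in Theorem \ref{stability}) identifies the limit as an $F$-solution. The only cosmetic differences are that you route the Harnack step through the differences $u_k-u_1$ and the theorem on sums (the paper applies Harnack to the nonnegative normalized solutions directly, which avoids that machinery since $F$-solutions are closed under adding constants), and your final Dini's-theorem step is superfluous once the subsequential uniform limit has been identified with $u$.
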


\begin{proof}
	If $u(x) < \infty$ for some $x \in \Omega$, it follows from Harnack inequality that $u$ is locally bounded in $\Omega$. The interior $C^{\alpha}$-estimate yields that the sequence $u_k$ is equicontinuous in every compact subset of $\Omega$. Thus, applying Arzela-Ascoli theorem and Theorem \ref{stability} (i), we finish the proof.
\end{proof}

We demonstrate two essential tools for Perron's method, namely, the comparison principle and the solvability of the Dirichlet problem in a ball.
\begin{theorem}[Comparison principle for $F$-super/subsolutions, \cite{KK00, KK07}] \label{cpsupersol}
	Let $\Omega$ be a bounded open subset of $\mathbb{R}^n$. Let $v \in \mathrm{USC}(\overline{\Omega})$ [resp.$u \in \mathrm{LSC}(\overline{\Omega})$] be an $F$-subsolution [resp. $F$-supersolution] in $\Omega$ and $v \leq u$ on $\partial \Omega$. Then $v \leq u$ in $\overline{\Omega}$.
\end{theorem}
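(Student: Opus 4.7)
The plan is to argue by contradiction via the classical doubling-of-variables technique, combined with a perturbation to produce a strict subsolution. Suppose toward contradiction that $M := \max_{\overline{\Omega}}(v - u) > 0$. Since $v \leq u$ on $\partial\Omega$, the maximum of $v - u$ is attained at some interior point. Our goal is to push the sub/supersolution inequalities against each other at nearby points and derive a strict numerical contradiction using uniform ellipticity.

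First I would regularize $v$ into a strict $F$-subsolution. Fix $R > 0$ large so that $\Omega \subset B_R$, and for small $\eta > 0$ set
\[
v_\eta(x) := v(x) + \eta\bigl(|x|^2 - R^2\bigr).
\]
On $\partial\Omega$ the perturbation is $\leq 0$, so $v_\eta \leq v \leq u$ there; choosing $\eta < M/R^2$ still leaves $\max_{\overline{\Omega}}(v_\eta - u) > 0$, necessarily attained at some interior point $\hat{x} \in \Omega$. The key property is that $v_\eta$ is a \emph{strict} subsolution: if $(p, X) \in \overline{J}^{2,+} v_\eta(x_0)$, then $(p - 2\eta x_0, X - 2\eta I) \in \overline{J}^{2,+} v(x_0)$, hence $F(X - 2\eta I) \geq 0$, and by (F1)
\[
F(X) \;\geq\; F(X - 2\eta I) + \lambda\,\|2\eta I\| \;\geq\; c_\eta > 0.
\]

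Next I would apply the doubling-of-variables trick. For $\beta > 0$ consider
\[
\Phi_\beta(x, y) := v_\eta(x) - u(y) - \frac{\beta}{2}\,|x - y|^2
\]
on $\overline{\Omega} \times \overline{\Omega}$. Standard semicontinuity arguments show a maximum is attained at some $(x_\beta, y_\beta)$; as $\beta \to \infty$, one has $x_\beta, y_\beta \to \hat{x}$ and $\beta|x_\beta - y_\beta|^2 \to 0$, so for $\beta$ large both points lie in $\Omega$. By the theorem on sums (Crandall--Ishii--Lions; this is where I invoke the machinery behind the citations \cite{KK00, KK07}), there exist symmetric matrices $X_\beta, Y_\beta \in \mathcal{S}^n$ with $(p_\beta, X_\beta) \in \overline{J}^{2,+} v_\eta(x_\beta)$, $(p_\beta, Y_\beta) \in \overline{J}^{2,-} u(y_\beta)$, where $p_\beta = \beta(x_\beta - y_\beta)$, and crucially
\[
X_\beta \;\leq\; Y_\beta
\]
as symmetric matrices (obtained by choosing the small parameter in Ishii's lemma appropriately, since the quadratic penalty has no mixed second-order terms beyond the standard ones).

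The conclusion is then immediate from (F1): since $Y_\beta - X_\beta \geq 0$, uniform ellipticity gives $F(Y_\beta) \geq F(X_\beta)$. But the strict subsolution property yields $F(X_\beta) \geq c_\eta > 0$, while the supersolution property for $u$ gives $F(Y_\beta) \leq 0$. This is the sought contradiction, so $v_\eta \leq u$ on $\overline{\Omega}$, and letting $\eta \to 0$ yields $v \leq u$.

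The main obstacle is the doubling-of-variables step: producing the paired jets with the matrix inequality $X_\beta \leq Y_\beta$ requires the full theorem on sums, which is nontrivial to prove from scratch, but is exactly the content of the cited references. The rest of the argument, namely the perturbation to a strict subsolution and the final contradiction via (F1), is a routine consequence of uniform ellipticity and the fact (from (F2)) that $F(0) = 0$, which ensures the perturbation $\eta(|x|^2 - R^2)$ does not interfere with the structure of the equation.
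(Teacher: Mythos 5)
Your argument is correct and is exactly the standard Crandall--Ishii--Lions comparison proof (strict-subsolution perturbation, doubling of variables, theorem on sums giving $X_\beta\leq Y_\beta$, contradiction via (F1)) that the paper does not reproduce but delegates to the citations \cite{KK00, KK07}. The only genuinely nontrivial ingredient, Ishii's lemma, is correctly identified and correctly applied (the penalty matrix $A=\beta\begin{pmatrix}I&-I\\-I&I\end{pmatrix}$ satisfies $A^2=2\beta A$, so testing with vectors of the form $(\xi,\xi)$ indeed yields $X_\beta\leq Y_\beta$), and the rest uses only (F1), matching the paper's standing assumptions in that subsection.
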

In the previous theorem, $\mathrm{USC}(\overline{\Omega})$ denotes the set of all upper semi-continuous functions from $\overline{\Omega}$ to $\mathbb{R}$.
Moreover, note that for a lower semi-continuous function $f$, there exists an increasing sequence of continuous functions $\{f_n\}$ such that $f_n \to f$ pointwise as $n \to \infty$.

\begin{theorem}[The solvability of the Dirichlet problem]\label{soldp}
	Let $\Omega$ satisfy a uniform exterior cone condition and $f \in C(\partial \Omega)$. Then there exists a unique $F$-solution $u \in C(\overline{\Omega})$ of the Dirichlet problem
	\begin{align*}
	\left\{ \begin{array}{ll} 
	F(D^2u)=0 & \textrm{in $\Omega$}, \\
	u=f & \textrm{on $\partial \Omega$}.
	\end{array} \right.
	\end{align*}
\end{theorem}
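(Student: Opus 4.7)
The plan is to run Perron's method in the viscosity setting and then attach boundary values via barriers generated by the exterior cone condition.

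First, I would define the upper Perron solution
\begin{align*}
\overline{H}_f(x) = \inf\bigl\{u(x) : u \in \mathrm{LSC}(\overline{\Omega}),\ u \text{ is an } F\text{-supersolution in } \Omega,\ u \geq f \text{ on } \partial\Omega\bigr\},
\end{align*}
and $\underline{H}_f$ dually. The constants $\pm \|f\|_\infty$ are admissible, so both are finite, and Theorem \ref{cpsupersol} gives $\underline{H}_f \leq \overline{H}_f$ in $\overline{\Omega}$. To show that $\overline{H}_f$ is an $F$-solution in $\Omega$, I would invoke the Poisson modification argument: on any ball $B \subset\subset \Omega$, solvability of the Dirichlet problem on $B$ is available from standard exterior-sphere barriers (a consequence of (F1), (F2) and explicit radial supersolutions), so replacing any admissible $u$ by the Dirichlet lift of $u|_{\partial B}$ on $B$ produces a smaller admissible function. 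Exhausting $\Omega$ through a countable basis of balls and applying the Harnack convergence theorem (Theorem \ref{harnackct}) together with the stability theorem (Theorem \ref{stability}) then shows that $\overline{H}_f$ is $F$-harmonic throughout $\Omega$; the same applies to $\underline{H}_f$.

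The core technical step is to construct, at every $x_0 \in \partial\Omega$, a barrier: a continuous positive $F$-supersolution $w_{x_0}$ on $\Omega \cap B_r(x_0)$ with $\lim_{y \to x_0} w_{x_0}(y) = 0$ and $w_{x_0} \geq \delta > 0$ on $\Omega \cap \partial B_r(x_0)$. By the uniform exterior cone condition there is a truncated closed cone $C \subset \Omega^c$ with vertex $x_0$, whose opening is controlled from below. I would build $w_{x_0}$ as a separated-variables Pucci supersolution of the form $w(x) = |x - x_0|^{\alpha}\, \Phi(\theta)$ on $\mathbb{R}^n \setminus C$, where $\theta$ is the angle from the cone axis; plugging into $\mathcal{P}_{\lambda,\Lambda}^+(D^2 w) = 0$ reduces to a nonlinear ODE eigenvalue problem for $(\alpha, \Phi)$ on the spherical cap exterior to the cone, whose solvability with $\alpha > 0$ and $\Phi > 0$ is classical. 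Such a $w$ is automatically an $F$-supersolution because $F(M) \leq \mathcal{P}_{\lambda,\Lambda}^+(M)$ by (F1), (F2); a suitable cutoff and normalization yield the desired barrier. This step is the main obstacle: in the divergence setting one could fall back on variational capacity-type barriers, whereas here we must rely on explicit spherical ODE analysis and Pucci comparison.

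Given the barrier, boundary continuity follows from a standard competition argument: for $\varepsilon > 0$, pick a neighborhood of $x_0$ on which $|f - f(x_0)| < \varepsilon$, and choose $K$ so large that $f(x_0) + \varepsilon + K w_{x_0}$ dominates $f$ on the remainder of $\partial\Omega$. Extended suitably, this is an admissible competitor for $\overline{H}_f$ near $x_0$, so $\limsup_{y \to x_0} \overline{H}_f(y) \leq f(x_0) + \varepsilon$. The dual construction on $\widetilde{F}$, using $\mathcal{P}_{\lambda,\Lambda}^-$ and the bound $F(M) \geq \mathcal{P}_{\lambda,\Lambda}^-(M)$, produces the matching lower barrier and hence $\liminf_{y \to x_0} \underline{H}_f(y) \geq f(x_0) - \varepsilon$. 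Combined with $\underline{H}_f \leq \overline{H}_f$ and the fact that both are $F$-solutions in $\Omega$ with the same continuous boundary limits, Theorem \ref{cpsupersol} forces $\underline{H}_f = \overline{H}_f =: u \in C(\overline{\Omega})$ with $u|_{\partial\Omega} = f$; uniqueness is immediate from the comparison principle.
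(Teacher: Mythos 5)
Your proposal follows essentially the same route as the paper's proof, which is given only at citation level: existence via global barriers built from the exterior cone condition together with the standard Perron method (the paper points to \cite{CKLS99, Mic81, CIL92, IL90} for exactly the Miller-type homogeneous Pucci barriers and the Perron construction you sketch), and uniqueness from the comparison principle, Theorem \ref{cpsupersol}. Your write-up is correct and in fact more detailed than the paper's; the only point worth tightening is that the interior solution property of the Perron envelope on a ball should be obtained from Ishii's bump construction (as in \cite{CIL92}) rather than from Poisson modification, which would presuppose solvability on sub-balls.
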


\begin{proof}
	The existence depends on the construction of global barriers achieving given boundary data and the standard Perron's method; see \cite{CKLS99, Mic81} and \cite{CIL92, IL90}. Then the uniqueness comes from the comparison principle, Theorem \ref{cpsupersol}.
\end{proof}

\begin{definition}[$F$-superharmonic function]
	A function $u :\Omega \to (-\infty, \infty]$ is called \textit{$F$-superharmonic} if 
	\begin{enumerate}[(i)]
		\item $u$ is lower semi-continuous;
		\item $u \not\equiv \infty$ in each component of $\Omega$;
		\item $u$ satisfies the comparison principle in each open $D \subset \subset \Omega$: If $h \in C(\overline{D})$ is an $F$-solution in $D$, and if $h \leq u$ on $\partial D$, then $h \leq u$ in $D$.
	\end{enumerate}
Analagously, a function $u :\Omega \to [-\infty, \infty)$ is called \textit{$F$-subharmonic} if 
\begin{enumerate}[(i)]
	\item $u$ is upper semi-continuous;
	\item $u \not\equiv -\infty$ in each component of $\Omega$;
	\item $u$ satisfies the comparsion principle in each open $D \subset \subset \Omega$: If $h \in C(\overline{D})$ is an $F$-solution in $D$, and if $h \geq u$ on $\partial D$, then $h \geq u$ in $D$.
\end{enumerate}
 We say that $u \in C({\Omega})$ is \textit{$F$-harmonic} if $u$ is both $F$-subharmonic and $F$-superharmonic.  
\end{definition}

\begin{lemma}\label{easy}
	\begin{enumerate}[(i)]
		\item If $u$ is $F$-superharmonic, then $au+b$ is $F$-superharmonic whenever $a$ and $b$ are real numbers and $a \geq 0$.
		
		\item If $u$ and $v$ are $F$-superhmaronic, then the function $\min\{u,v\}$ is $F$-superharmonic.
		
		\item Suppose that $u_i$, $i=1,2, \cdots$, are $F$-superharmonic in $\Omega$. If the sequence $u_i$ is increasing or converges uniformly on compact subsets of $\Omega$, then in each component of $\Omega$, the limit function $u=\lim_{i \to \infty} u_i$ is $F$-superharmonic unless $u \equiv \infty$.
	\end{enumerate}
\end{lemma}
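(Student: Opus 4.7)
The plan is to verify each of the three defining properties of $F$-superharmonicity (lower semi-continuity, not identically $+\infty$ on any component, and the comparison property against $F$-solutions) for each of the three constructions. Observe first that conditions (F1) and (F2) together force $F(0)=0$, so constants are $F$-solutions and, by (F2), positive rescalings and constant shifts of $F$-solutions are $F$-solutions; this fact will be used freely.

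For (i), lower semi-continuity and the finiteness condition are immediate: if $a=0$ then $au+b\equiv b$ is finite, and if $a>0$ then lsc and the non-degeneracy of $u$ transfer to $au+b$. For the comparison property on $D\subset\subset\Omega$, given $h\in C(\overline D)$ an $F$-solution with $h\le au+b$ on $\partial D$, the plan is to set $h':=(h-b)/a$ when $a>0$, note that $h'$ is again an $F$-solution and $h'\le u$ on $\partial D$, and invoke the comparison property of $u$ to conclude $h'\le u$ in $D$, i.e.\ $h\le au+b$; the case $a=0$ reduces to Theorem \ref{cpsupersol} applied to the constant $b$.

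For (ii), $\min\{u,v\}$ is lsc since the pointwise minimum of two lsc functions is lsc. The non-degeneracy condition holds componentwise: if $\min\{u,v\}\equiv+\infty$ on some component $\Omega_0$, then in particular $u\equiv v\equiv+\infty$ on $\Omega_0$, contradicting the hypotheses on $u$ and $v$. For comparison, an $F$-solution $h$ with $h\le \min\{u,v\}$ on $\partial D$ satisfies both $h\le u$ and $h\le v$ on $\partial D$, so applying superharmonicity of $u$ and of $v$ separately gives $h\le u$ and $h\le v$ in $D$, hence $h\le\min\{u,v\}$ in $D$.

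For (iii), first handle lower semi-continuity: in the increasing case $u=\sup_i u_i$ is a supremum of lsc functions hence lsc, while in the locally uniform case an lsc uniform limit on compacta is lsc. The non-degeneracy is assumed. The heart of the argument is the comparison property. Given an $F$-solution $h\in C(\overline D)$ with $h\le u$ on $\partial D$ and any $\varepsilon>0$, the goal is to show $h-\varepsilon\le u_i$ on $\partial D$ for all sufficiently large $i$. In the increasing case one sets
\begin{equation*}
A_i:=\{x\in\overline D:u_i(x)-h(x)>-\varepsilon\},
\end{equation*}
which are open by lsc of $u_i-h$, nested increasing, and cover the compact set $\partial D$ (because $u_i(x)\nearrow u(x)\ge h(x)>h(x)-\varepsilon$), so finitely many suffice and in fact a single $i_0$ works. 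In the locally uniform case the same conclusion is even easier. One then applies the superharmonicity of $u_i$ to obtain $h-\varepsilon\le u_i$ in $D$, lets $i\to\infty$, and finally $\varepsilon\to0$.

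The only step requiring genuine care is the boundary argument in (iii) in the increasing case, since $u$ itself is only lsc and Dini's theorem does not literally apply; the compactness-plus-openness argument above is the natural replacement. The remaining parts are essentially bookkeeping against the definition.
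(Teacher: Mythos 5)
Your proof is correct; the paper states this lemma without proof, and your argument is exactly the standard one that is being left to the reader (lsc and non-degeneracy are immediate, the comparison property transfers via affine rescaling of the test solution in (i), via splitting the boundary inequality in (ii), and via the $\varepsilon$-shift plus the compactness/openness covering of $\partial D$ in the increasing case of (iii)). One small point worth flagging: your reduction in (i) genuinely uses (F2) so that $(h-b)/a$ is again an $F$-solution, which is consistent with the paper's standing assumptions but slightly at odds with the subsection's claim that only (F1) is needed there; you correctly identified this dependence.
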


%

\begin{theorem}[Comparison principle for $F$-super/subharmonic functions] \label{cpsuperhar}
	Suppose that $u$ is $F$-superharmonic and that $v$ is $F$-subharmonic in $\Omega$. If 
	\begin{align*}
	\limsup_{y \to x}v(y) \leq \liminf_{y \to x}u(y)
	\end{align*}
	for all $x \in \partial \Omega$, then $v \leq u$ in $\Omega$.
\end{theorem}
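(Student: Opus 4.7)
The plan is to argue by contradiction and reduce the assertion to the viscosity comparison principle (Theorem \ref{cpsupersol}) by sandwiching $u$ and $v$ between $F$-solutions produced by the Perron--Dirichlet construction on a suitable regular domain.

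First, set $w = v - u$; since $v$ is upper semi-continuous and $u$ is lower semi-continuous, $w$ is upper semi-continuous in $\Omega$. If the conclusion were to fail, there would exist $\eta>0$ with $D_\eta := \{x \in \Omega : v(x) > u(x) + \eta\}$ non-empty, and upper semi-continuity of $w$ makes $D_\eta$ open. The boundary hypothesis, combined with $\limsup_{y\to x}(v-u)(y) \le \limsup_{y\to x} v(y) - \liminf_{y\to x} u(y) \le 0$ for every $x \in \partial\Omega$, shows that each point of $\partial\Omega$ admits a neighborhood disjoint from $D_\eta$. Hence $\overline{D_\eta}$ is a compact subset of $\Omega$, and I may choose a smooth bounded open set $D$ with $\overline{D_\eta} \subset D \subset \overline{D} \subset \Omega$; smoothness gives a uniform exterior cone condition, so Theorem \ref{soldp} is applicable on (each component of) $D$.

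Next I sandwich $u$ and $v$ by $F$-solutions on $D$. Using lower semi-continuity of $u$ and upper semi-continuity of $v$ on the compact set $\partial D$, fix continuous approximations $\phi_k \nearrow u|_{\partial D}$ and $\psi_k \searrow v|_{\partial D}$; by Theorem \ref{soldp} there exist $F$-solutions $H_k, G_k \in C(\overline{D})$ with $H_k = \phi_k$ and $G_k = \psi_k$ on $\partial D$. Theorem \ref{cpsupersol} applied to consecutive pairs makes $H_k$ non-decreasing and $G_k$ non-increasing, while the comparison property built into the definition of $F$-super/subharmonicity gives $H_k \le u$ and $G_k \ge v$ in $D$. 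Since $u \not\equiv \infty$ and $v \not\equiv -\infty$, Theorem \ref{harnackct}, together with its dual form obtained by viewing $-G_k$ as an increasing sequence of $\widetilde{F}$-solutions, yields $F$-harmonic limits $H := \lim_k H_k$ and $G := \lim_k G_k$ in $D$ with $H \le u$ and $G \ge v$. Moreover, the continuity of each $H_k$ and $G_k$ up to $\partial D$ forces
\[
\liminf_{D \ni y \to x} H(y) \ge u(x), \qquad \limsup_{D \ni y \to x} G(y) \le v(x), \quad x \in \partial D.
\]

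To close the argument, note that $\partial D \subset \Omega \setminus D_\eta$, so $v \le u + \eta$ on $\partial D$. The upper semi-continuous envelope $G^*$ and the lower semi-continuous envelope $H_*$ of $G,H$ on $\overline{D}$ therefore satisfy $G^* \le H_* + \eta$ on $\partial D$, while $G^*$ is a viscosity $F$-subsolution and $H_* + \eta$ a viscosity $F$-supersolution in $D$. Theorem \ref{cpsupersol} then yields $G \le H + \eta$ throughout $D$. But on $D_\eta \subset D$ one has $G - H \ge v - u > \eta$, a contradiction; hence no such $\eta$ exists, and $v \le u$ in $\Omega$.

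The principal obstacle is the monotone passage to the $F$-harmonic limits $H$ and $G$: one has to verify both that the increasing (respectively decreasing) sequences of continuous $F$-solutions converge to honest $F$-solutions (via Theorem \ref{harnackct} and its dual for $\widetilde{F}$) and that these limits retain the correct one-sided boundary values on $\partial D$ so that the viscosity comparison Theorem \ref{cpsupersol}, applied to the semi-continuous envelopes, delivers the desired inequality on all of $D$.
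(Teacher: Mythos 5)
Your argument is correct and shares the paper's overall reduction: the set where $v$ exceeds $u$ by a fixed margin is compactly contained in $\Omega$, you enclose it in a subdomain where the Dirichlet problem is solvable (Theorem \ref{soldp}), and you push the boundary inequality inward. Where you genuinely diverge is in how that last step is executed. The paper inserts a \emph{single} continuous function $\theta$ with $v \le \theta \le u+\varepsilon$ on the boundary of the subdomain, solves one Dirichlet problem for $h$, and applies the comparison clause built into the definitions of $F$-super- and $F$-subharmonicity twice to get $v \le h \le u+\varepsilon$; no limit passage and no appeal to Theorems \ref{cpsupersol} or \ref{harnackct} is needed. You instead run two monotone approximations, solve a sequence of Dirichlet problems, pass to Harnack-convergence limits $H$ and $G$, and compare their semicontinuous envelopes via Theorem \ref{cpsupersol}. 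Your route has the virtue of avoiding the insertion of a continuous function between a usc function and an lsc function (which the paper asserts without proof), but it costs two limit passages and leaves loose ends you should tighten: (a) Theorem \ref{harnackct} only says $H$ is an $F$-solution or $\equiv+\infty$ \emph{on each component} of $D$, and ``$u\not\equiv\infty$ in $\Omega$'' does not preclude $H\equiv+\infty$ on some component of $D$; restrict to the components meeting $D_\eta$, where $H\le u<\infty$ and $G\ge v>-\infty$ at points of $D_\eta$; (b) $G^{*}$ and $H_{*}$ must be real-valued on $\overline{D}$ to fit the hypotheses of Theorem \ref{cpsupersol}, which may require a truncation; (c) the limit passage is avoidable altogether by a Dini-type compactness argument on $\partial D$ producing one index $k$ with $\psi_k<\phi_k+\eta$ there, after which $G_k\le H_k+\eta$ follows at once. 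Finally, both your proof and the paper's tacitly assume that $\limsup_{y\to x}v(y)$ and $\liminf_{y\to x}u(y)$ are not simultaneously $+\infty$ at a boundary point; without that convention the compactness of $\overline{D_\eta}$ (respectively $K_\varepsilon$) can fail, and indeed the statement itself requires this standard caveat.
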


\begin{proof}
	Fix $\varepsilon>0$ and let 
	\begin{align*}
		K_{\varepsilon}:=\{x\in \Omega: v(x) \geq u(x)+\varepsilon\}.
	\end{align*}
	Then $K_{\varepsilon}$ is a compact subset of $\Omega$ and so there exists an open cover $D_{\varepsilon}$ such that $K_{\varepsilon} \subset D_{\varepsilon} \subset \Omega$ where $D_{\varepsilon}$ is a union of finitely many balls $B_i$, and $\partial D_{\varepsilon} \subset \Omega \setminus K_{\varepsilon}$. Since $u$ is lower semi-continuous, $v$ is upper semi-continuous and $\partial D_{\varepsilon}$ is compact, we can choose a continuous function $\theta$ on $\partial D_{\varepsilon}$ such that $v \leq \theta \leq u+\varepsilon$ on $\partial D_{\varepsilon}$. Moreover, since $D_{\varepsilon}$ satisfies a uniform exterior cone condition, there exists $h \in C(\overline{D})$ which is the unique $F$-solution in $D_{\varepsilon}$ that coincides with $\theta$ on $\partial D_{\varepsilon}$ by applying Theorem \ref{soldp}. Now the definition of $F$-super/subharmonic functions yields that 
	\begin{align*}
	v \leq h \leq  u+\varepsilon \quad \text{in $D_{\varepsilon}$}.
	\end{align*}
	Hence, $v \leq u+\varepsilon$ in $\Omega$ and the desired result follows by letting $\varepsilon \to 0$.
\end{proof}

Now we describe the equivalence of $F$-supersolution and $F$-superharmonic function; see also \cite{Hor94, KKL19, Lab02b}. 
\begin{theorem}\label{equiv}
	$u$ is an $F$-supersolution in $\Omega$ if and only if $u$ is $F$-superharmonic in $\Omega$.
\end{theorem}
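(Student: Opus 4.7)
The plan is to prove both directions separately. The forward direction ($F$-supersolution $\Rightarrow$ $F$-superharmonic) should follow almost at once from the viscosity comparison principle Theorem~\ref{cpsupersol}: given a compactly contained subdomain $D \subset\subset \Omega$ and an $F$-solution $h \in C(\overline{D})$ with $h \leq u$ on $\partial D$, I would simply apply Theorem~\ref{cpsupersol} to the pair $(h,u)$ (with $h$ upper semi-continuous as an $F$-subsolution and $u$ lower semi-continuous as an $F$-supersolution) to conclude $h \leq u$ in $\overline{D}$. This verifies condition (iii) in the definition of $F$-superharmonic; conditions (i) and (ii) are already built into the hypotheses (after ruling out the trivial case $u \equiv \infty$).

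The reverse direction ($F$-superharmonic $\Rightarrow$ $F$-supersolution) is the substantive one, and I would carry it out by contradiction. Assume $u$ is $F$-superharmonic, and suppose for contradiction that there exist $x_0 \in \Omega$ and $\varphi \in C^2(\Omega)$ such that $u - \varphi$ attains a local minimum at $x_0$ while $F(D^2\varphi(x_0)) > 0$. Subtracting a constant, I may assume $\varphi(x_0) = u(x_0)$, and by replacing $\varphi(x)$ by $\varphi(x) - |x - x_0|^4$ the minimum becomes strict without altering $D^2\varphi(x_0)$. By continuity of $F$ and of $D^2\varphi$, I then pick $r > 0$ small enough that $\overline{B_r(x_0)} \subset \Omega$, $F(D^2\varphi) > 0$ throughout $\overline{B_r(x_0)}$, and $u > \varphi$ on $\overline{B_r(x_0)} \setminus \{x_0\}$.

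Using lower semi-continuity of $u$ and compactness of $\partial B_r(x_0)$, I set $\delta := \min_{\partial B_r(x_0)} (u - \varphi) > 0$ and define the perturbation $\tilde\varphi := \varphi + \delta/2$. Then $\tilde\varphi$ remains a classical (hence viscosity) $F$-subsolution in $B_r(x_0)$, it satisfies $\tilde\varphi \leq u$ on $\partial B_r(x_0)$, yet $\tilde\varphi(x_0) = u(x_0) + \delta/2 > u(x_0)$. To close the argument I first promote $\tilde\varphi$ to an $F$-subharmonic function: for any $D' \subset\subset B_r(x_0)$ and any $F$-solution $h \in C(\overline{D'})$ with $h \geq \tilde\varphi$ on $\partial D'$, Theorem~\ref{cpsupersol} applied to the $F$-subsolution $\tilde\varphi$ and the $F$-supersolution $h$ yields $\tilde\varphi \leq h$ in $D'$. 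Then Theorem~\ref{cpsuperhar}, applied in $B_r(x_0)$ to the $F$-subharmonic $\tilde\varphi$ and the $F$-superharmonic $u$ (the required boundary inequality following from continuity of $\tilde\varphi$, lower semi-continuity of $u$, and $\tilde\varphi \leq u$ on $\partial B_r(x_0)$), forces $\tilde\varphi \leq u$ in $B_r(x_0)$, contradicting $\tilde\varphi(x_0) > u(x_0)$.

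The main obstacle, as I see it, is the logical routing rather than any analytic difficulty. One cannot short-circuit the argument by directly comparing $\tilde\varphi$ with $u$ via Theorem~\ref{cpsupersol}, because this would presuppose $u$ is an $F$-supersolution, which is exactly the conclusion we are trying to establish. This is precisely why the comparison must be routed through auxiliary $F$-solutions on smaller subdomains (promoting the classical subsolution $\tilde\varphi$ to an $F$-subharmonic function) and then closed through the $F$-superharmonic comparison principle Theorem~\ref{cpsuperhar}, which has already been proved independently of viscosity notions.
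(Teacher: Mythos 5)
Your proof is correct and follows essentially the same route as the paper: the forward direction via Theorem~\ref{cpsupersol}, and the reverse direction by contradiction, promoting a classical subsolution perturbation of $\varphi$ to an $F$-subharmonic function and closing with Theorem~\ref{cpsuperhar}. The only (cosmetic) difference is the perturbation: the paper adds a small bump $\varepsilon_0\eta$ using uniform ellipticity, whereas you penalize by $|x-x_0|^4$ to make the touching strict and then lift by a constant $\delta/2$; both yield the same contradiction at $x_0$.
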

\begin{proof}
	Assume first that $u$ is an $F$-supersolution in $\Omega$. To show that $u$ is $F$-superharmonic, we only need to verify the property (iii) in the definition of $F$-superharmonic functions. Let $D \subset \subset \Omega$ be an open set and take $h \in C(\overline{D})$ to be an $F$-solution in $D$ such that $h \leq u$ on $\partial D$. Thus, applying the comparison principle for $F$-super/subsolutions (Theorem \ref{cpsupersol}) for $u$ and $h$, we conclude that $h \leq u$ in $\overline{D}$.
	
	Assume now that $u$ is $F$-superharmonic in $\Omega$. For any $x_0 \in \Omega$, take $\varphi \in C^2(B_r(x_0))$ such that $u-\varphi$ has a local minimum $0$ at $x_0$. We need to prove that
	\begin{align} \label{testsuper}
	F(D^2\varphi(x_0)) \leq 0.
	\end{align}
	We argue by contradiction; suppose that \eqref{testsuper} fails. By the continuity of the operator $F$, there exist $\tau >0$ and $\rho \in (0,r)$ such that 
	\begin{align*}
	F(D^2\varphi(x)) > \tau \quad \text{in $B_{\rho}(x_0)$}.
	\end{align*}
	Consider a cut-off function $\eta \in C_0^2(B_{\rho}(x_0))$ with $\mathrm{supp}\,\eta \subset B_{\rho/2}(x_0)$ and $\eta(x_0)=1$. Since the uniform ellipticity gives
	\begin{align*}
	F(D^2(\varphi+\varepsilon\eta)) \geq F(D^2\varphi)+\varepsilon\, \mathcal{P}^-_{\lambda, \Lambda}(D^2\eta) \quad \text{for any $\varepsilon>0$},
	\end{align*}
	we can choose a sufficiently small $\varepsilon_0>0$ so that
	\begin{align*}
	F(D^2(\varphi+\varepsilon_0\eta)) \geq 0 \quad \text{in $B_{\rho}(x_0)$.}
	\end{align*} 
	In other words, since $\varphi+\varepsilon_0\eta \in C^2(B_{\rho}(x_0))$, $\varphi+\varepsilon_0 \eta$ is an $F$-subsolution in $B_{\rho}(x_0)$. Furthermore, by a similar argument as in the first part, we have $\varphi+\varepsilon_0 \eta$ is $F$-subharmonic in $B_{\rho}(x_0)$. On the other hand, on $\partial B_{\rho/2}(x_0)$, we have
	\begin{align*}
	\varphi(x)+\varepsilon_0\eta(x)=\varphi(x) \leq u(x).
	\end{align*}
	Thus, by the comparison principle for $F$-super/subharmonic functions (Theorem \ref{cpsuperhar}) for $u$ and $\varphi+\varepsilon_0 \eta$, we conclude that $\varphi+\varepsilon_0\eta \leq u$ in $B_{\rho/2}(x_0)$. In particular, letting $x=x_0$, we have $\varphi(x_0)+\varepsilon_0 \leq u(x_0)$, which contradicts to the fact that $u(x_0)=\varphi(x_0)$.
\end{proof}

The result for $F$-subsolution and $F$-subharmonic function can be derived in the same manner and consequently, a function $u$ is an $F$-solution if and only if it is $F$-harmonic.

\subsection{Perron's method}
\begin{lemma}[Pasting lemma] \label{pasting}
	Let $D \subset \Omega$ be open. Also let $u$ and $v$ be $F$-superharmonic in $\Omega$ and $D$, resepctively. If the function
	\begin{align*}
	s:=
	\left\{ \begin{array}{ll} 
	\min\{u, v\} & \textrm{in $D$}, \\
	u & \textrm{in $\Omega \setminus D$},
	\end{array} \right.
	\end{align*}
	is lower semi-continuous, then $s$ is $F$-superharmonic in $\Omega$.
\end{lemma}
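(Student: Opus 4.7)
The plan is to verify the three defining properties of $F$-superharmonicity for $s$ on $\Omega$. Properties (i) and (ii) are almost immediate: (i) is the standing hypothesis, and (ii) follows from $s \leq u$ in $\Omega$ together with $u \not\equiv \infty$ in each component. So the real content is the comparison property (iii): fix an open set $D' \subset\subset \Omega$ and $h \in C(\overline{D'})$ that is $F$-harmonic in $D'$ with $h \leq s$ on $\partial D'$; I must prove $h \leq s$ in $D'$.

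The argument splits $D'$ into the part outside $D$ and the part inside $D$. First, since $s \leq u$ globally, we have $h \leq s \leq u$ on $\partial D'$, and $u$ is $F$-superharmonic in $\Omega$, so applying property (iii) for $u$ on $D'$ gives $h \leq u$ in $D'$. On $D' \setminus D$, we have $s = u$, so this already yields $h \leq s$ there. It remains to show $h \leq v$ (and hence $h \leq \min(u,v) = s$) on the open set $D'' := D' \cap D$. For this I will invoke the comparison principle for $F$-super/subharmonic functions, Theorem~\ref{cpsuperhar}, with $h$ as the $F$-subharmonic function and $v$ as the $F$-superharmonic function on $D''$ (note $D'' \subset D$, so $v$ is $F$-superharmonic there).

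The delicate step — the one I expect to be the main obstacle — is verifying the boundary inequality $\limsup_{y\to x,\,y\in D''} h(y) \leq \liminf_{y\to x,\,y\in D''} v(y)$ for every $x \in \partial D''$, since $v$ is only a priori defined on $D$ and is not assumed continuous up to $\partial D$. I split $\partial D''$ into three cases. If $x \in \partial D' \cap D$, then $h(x) \leq s(x) \leq v(x)$ and lower semi-continuity of $v$ gives the inequality. If $x \in \partial D' \cap \partial D$, then $x \in \Omega \setminus D$ (as $D$ is open), so $s(x) = u(x)$; the hypothesis yields $h(x) \leq s(x) = u(x)$, while the crucial observation $v(y) \geq s(y)$ for $y \in D$ together with the assumed lower semi-continuity of $s$ at $x$ gives
\begin{equation*}
\liminf_{y\to x,\,y\in D''} v(y) \;\geq\; \liminf_{y\to x,\,y\in D''} s(y) \;\geq\; s(x) \;=\; u(x) \;\geq\; h(x).
\end{equation*}
If $x \in D' \cap \partial D$, the same bound $v \geq s$ and lower semi-continuity of $s$ give $\liminf v \geq s(x) = u(x)$, while $h(x) \leq u(x)$ is available from the first step ($h \leq u$ in $D'$). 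In every case the hypothesis of Theorem~\ref{cpsuperhar} holds.

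Applying Theorem~\ref{cpsuperhar} on $D''$ then yields $h \leq v$ in $D''$, and combined with $h \leq u$ in $D'$ (from Step~1) we obtain $h \leq \min(u,v) = s$ in $D''$ and $h \leq u = s$ in $D' \setminus D$, i.e. $h \leq s$ throughout $D'$. This establishes property (iii) and completes the proof. The key conceptual point is that the lower semi-continuity assumption on $s$ at points of $\partial D$ is exactly what is needed to transfer the control of $u$ into a boundary bound on $v$ from inside $D$, bridging the fact that $v$ is not specified outside $D$.
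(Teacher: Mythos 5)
Your proof is correct and follows essentially the same route as the paper's: reduce to the comparison property, use $s\leq u$ to get $h\leq u$ on all of the test domain, and then apply Theorem~\ref{cpsuperhar} to $h$ and $v$ on $D'\cap D$, with the lower semi-continuity of $s$ together with $v\geq s$ in $D$ supplying the boundary inequality on $\partial D$. Your three-case analysis of $\partial(D'\cap D)$ simply makes explicit what the paper's proof leaves implicit.
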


\begin{proof}
	Let $G \subset \subset \Omega$ be open and $h \in C(\overline{G})$ be $F$-harmonic such that $h \leq s$ on $\partial G$. Then $h \leq u$ in $\overline{G}$. In particular, since $s$ is lower semi-continuous,
	\begin{align*}
	\lim_{D\cap G \ni y \to x} h(y) \leq u(x)=s(x) \leq \liminf_{D\cap G \ni y \to x} v(y)
	\end{align*}
	for all $x \in \partial D \cap G$. Thus,
	\begin{align*}
	\lim_{D\cap G \ni y \to x} h(y) \leq s(x) \leq \liminf_{D\cap G \ni y \to x}v(y)
	\end{align*}
	for all $x \in \partial (D \cap G)$, and Theorem \ref{cpsuperhar} implies $h \leq v$ in $D \cap G$. Therefore, $h \leq s$ in $G$ and the lemma is proved.
\end{proof}

Suppose that $u$ is $F$-superharmonic in $\Omega$ and that $B \subset\subset \Omega$ is an open ball. Let 
\begin{align*}
u_B(z):=\inf\{v(z) :\text{$v$ is $F$-superharmonic in $B$, $\liminf_{y \to x}v(y) \geq u(x)$ for each $x\in \partial B$}\}.
\end{align*}
Then define the \textit{Poisson modification} $P(u, B)$ of $u$ in $B$ to be the function
\begin{align*}
P(u, B):=
\left\{ \begin{array}{ll} 
u_B & \textrm{in $B$}, \\
u & \textrm{in $\Omega \setminus B$}.
\end{array} \right.
\end{align*}

\begin{lemma}[Poisson modification] \label{poisson}
	The Poisson modification $P(u, B)$ is $F$-superharmonic in $\Omega$, $F$-harmonic in $B$, and $P(u, B) \leq u$ in $\Omega$.
\end{lemma}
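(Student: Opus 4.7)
The proof splits into three claims: (a) $P(u,B)\le u$ in $\Omega$, (b) $u_B$ is $F$-harmonic in $B$, and (c) $P(u,B)$ is $F$-superharmonic in $\Omega$. My plan is to dispose of (a) immediately, perform the bulk of the work on (b) via a local Perron-type argument, and reduce (c) to the pasting lemma.

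For (a), the restriction $u|_B$ itself belongs to the defining class of $u_B$: it is $F$-superharmonic in $B$ by restriction, and the lower semi-continuity of $u$ on $\Omega$ yields $\liminf_{B\ni y\to x}u(y)\ge u(x)$ for every $x\in\partial B$. Taking the infimum gives $u_B\le u$ in $B$, and outside $B$ equality holds by definition of the Poisson modification.

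The core of the argument is (b). I would first show $u_B$ is itself $F$-superharmonic in $B$: its LSC envelope $(u_B)_*$ is a viscosity $F$-supersolution (by the standard stability of LSC envelopes of infima of viscosity supersolutions) and still satisfies the boundary constraint, so $(u_B)_*$ belongs to the defining class; minimality gives $u_B\le(u_B)_*\le u_B$, hence $u_B$ is LSC and, via Theorem~\ref{equiv}, $F$-superharmonic. Next, fix $z_0\in B$ and a ball $\overline{B_r(z_0)}\subset B$. Choose a sequence $v_k$ from the defining class with $v_k(z_0)\to u_B(z_0)$; by Lemma~\ref{easy}(ii), after capping by $u$ and taking successive minima, I may assume $v_k$ is decreasing. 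I then Poisson-modify inside $B_r(z_0)$: approximate $v_k$ on $\partial B_r(z_0)$ from below by an increasing sequence of continuous functions, solve the resulting Dirichlet problems using Theorem~\ref{soldp}, and let $\tilde v_k$ be the supremum of the solutions. Theorem~\ref{harnackct} makes $\tilde v_k$ $F$-harmonic in $B_r(z_0)$, and Theorem~\ref{cpsupersol} gives $\tilde v_k\le v_k$. Pasting $\tilde v_k$ back into $v_k$ produces, via Lemma~\ref{pasting}, a new element of the defining class, so $u_B\le\tilde v_k$ in $B_r(z_0)$; sending $k\to\infty$ the monotone limit $w$ satisfies $u_B\le w$ in $B_r(z_0)$ with $w(z_0)=u_B(z_0)$, and applying Theorem~\ref{harnackct} to the increasing sequence of $\widetilde F$-solutions $-\tilde v_k$ identifies $w$ as $F$-harmonic. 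To upgrade the point identity $w(z_0)=u_B(z_0)$ to $w\equiv u_B$ on $B_r(z_0)$, I would iterate the construction at an arbitrary second point $z_1\in B_r(z_0)$, replacing $v_k$ by $\min(v_k,v_k')$ with $v_k'(z_1)\to u_B(z_1)$. This forces $u_B$ to be $F$-harmonic in $B_r(z_0)$ and, since $z_0$ was arbitrary, in $B$.

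For (c), the remaining ingredient is the lower semi-continuity of $P(u,B)$ at points of $\partial B$. This follows because the boundary constraint $\liminf_{B\ni y\to x}v(y)\ge u(x)$ passes from every member of the defining class to their infimum, yielding $\liminf_{B\ni y\to x}u_B(y)\ge u(x)=P(u,B)(x)$; LSC is inherited from $u$ elsewhere. Lemma~\ref{pasting} with $D=B$ then delivers the $F$-superharmonicity of $P(u,B)$ on $\Omega$. The main obstacle throughout is the Perron step in (b): elements of the defining class are merely LSC, so Theorem~\ref{soldp} cannot be applied directly with $v_k$ as boundary data on $\partial B_r(z_0)$ and a monotone continuous approximation is required; and promoting the one-point equality $w(z_0)=u_B(z_0)$ to full coincidence on $B_r(z_0)$ needs a second, more delicate round of Perron-type minimization combined through Lemma~\ref{easy}(ii).
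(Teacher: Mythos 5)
There is a genuine gap, and it sits at the crux of the lemma. In part (c) you justify the lower semi-continuity of $P(u,B)$ at $x\in\partial B$ by asserting that ``the boundary constraint $\liminf_{B\ni y\to x}v(y)\ge u(x)$ passes from every member of the defining class to their infimum.'' As a general principle about infima this is false: each $v_\alpha$ may dip below $u(x)$ on a small set whose location migrates toward $x$ as $\alpha$ varies, so that every individual $\liminf$ is large while the $\liminf$ of the infimum is small. What you get for free from the comparison principle is only $v\ge\min_{\partial B}u$ for every admissible $v$, hence $\liminf_{B\ni y\to x}u_B(y)\ge\min_{\partial B}u$, which is strictly weaker than $\ge u(x)$ unless $u$ is constant on $\partial B$. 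The correct argument --- and the one the paper uses --- is to manufacture $F$-harmonic minorants on all of $B$: take continuous $\theta_j\nearrow u$ on $\partial B$, solve $F(D^2h_j)=0$ in $B$ with $h_j=\theta_j$ on $\partial B$ by Theorem \ref{soldp}, observe via Theorem \ref{cpsuperhar} that $h_j\le v$ for every admissible $v$ and hence $h_j\le u_B$, and conclude $\liminf_{B\ni y\to x}u_B(y)\ge\liminf_{B\ni y\to x}h_j(y)=\theta_j(x)\to u(x)$; this is exactly display \eqref{check}. Note that the same unproved claim also underlies the very first step of your part (b), where you need $(u_B)_*$ to satisfy the boundary constraint in order to place it in the defining class. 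Your local constructions in $B_r(z_0)\subset\subset B$ cannot repair this, because they give no information near $\partial B$.

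Once those global minorants $h_j$ are in hand, the rest of your part (b) becomes unnecessary: $h=\lim_j h_j$ is $F$-harmonic by Theorem \ref{harnackct}, it is admissible by \eqref{check} so $h\ge u_B$, and $h_j\le v$ for every admissible $v$ so $h\le u_B$; thus $u_B=h$ is $F$-harmonic in $B$ outright. This is the paper's route. Your local Perron machinery, besides being far longer, has its own soft spots: the monotonicity of $\tilde v_k$ in $k$ needs a Dini-type argument since the continuous approximations of $v_{k+1}$ need not lie below those of $v_k$; and the final upgrade from $w(z_0)=u_B(z_0)$ to $w\equiv u_B$ on $B_r(z_0)$ does not follow from running the construction at one additional point $z_1$ --- a harmonic majorant of $u_B$ touching it at two points does not force $u_B$ to be harmonic. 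To close that step you would need a countable dense set together with the diagonal and continuity argument the paper uses to show that Perron solutions are $F$-harmonic.
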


\begin{proof}
	By definition, it is clear that $P(u, B) \leq u$ in $\Omega$. To show $P(u, B)$ is $F$-harmonic in $B$, choose an increasing sequence of continuous functions $\{\theta_j\}_{j \geq 1}$ on $\partial B$ such that $u=\lim_{j \to \infty} \theta_j$. (recall that this is possible since $u$ is lower semi-continuous.) Then let $h_j \in C(\overline{B})$ be the $F$-solution of the Dirichlet problem $F(D^2h_j)=0$ in $B$ and $h_j=\theta_j$ on $\partial B$ by Theorem \ref{soldp}. The comparison principle yields that $h_j$ is also an increasing sequence. Thus, applying Harnack convergence theorem (Theorem \ref{harnackct}), we have the limit function $h=\lim_{j \to \infty}h_j$ is an $F$-solution in $B$. Since
	\begin{align} \label{check}
	\liminf_{y \to x}h(y) \geq \lim_{j \to \infty}\liminf_{y \to x}h_j(y)=\lim_{j \to \infty}h_j(x)=\lim_{j \to \infty}\theta_j(x)=u(x),
	\end{align}
	for any $x \in \partial B$, we have $h \geq P(u, B)$ in $B$ by the definition of $u_B$. On the other hand, since $h_j(x) \leq \liminf_{y \to x}v(y)$ where $x \in \partial B$ and $v$ is an admissible function for $u_B$, we have $h \leq P(u, B)$ in $B$ by applying the comparison principle, letting $j \to \infty$ and taking the infimum over $v$. Therefore, $P(u, B)=h$ is $F$-harmonic in $B$.
	
	Finally, if we show that $P(u, B)$ is lower semi-continuous, then it immediately follows from the pasting lemma that $P(u, B)$ is $F$-superharmonic in $\Omega$. Indeed, it is enough to show that $P(u, B)$ is lower semi-continuous at each point $x \in \partial B$; recall \eqref{check}.
\end{proof}

	\begin{remark}[Perron's method]
	Let $\Omega$ be an open, bounded subset of $\mathbb{R}^{n}$ and $f$ be a bounded function on $\partial \Omega$. The \textit{upper class} $\mathcal{U}_f=\mathcal{U}_f(\Omega)$ consists of all functions $u$ in $\Omega$ such that
	\begin{enumerate}[(i)]
		\item $u$ is $F$-superharmonic in $\Omega$;
		\item $u$ is bounded below;
		\item $
		\liminf_{\Omega \ni y \to x}u(y) \geq f(x) \ \text{for each $x \in \partial \Omega.$}
		$
	\end{enumerate}
	Then, we define the \textit{upper Perron solution} of $f$ by 
	\begin{align*}
	\overline{H}_f=\overline{H}_f(\Omega):=\inf_{u \in \mathcal{U}_f} u.
	\end{align*}
	Similarly, let the \textit{lower class} $\mathcal{L}_f=\mathcal{L}_f(\Omega)$ be the set of all $F$-subharmonic functions $v$ in $\Omega$ which are bounded above and such that 
	\begin{align*}
	\limsup_{\Omega \ni y \to x}v(y) \leq f(x) \quad \text{for each $x \in \partial \Omega,$}
	\end{align*}
	and define the \textit{lower Perron solution} of $f$ by  
	\begin{align*}
	\underline{H}_f=\underline{H}_f(\Omega):=\sup_{v \in \mathcal{L}_f} v.
	\end{align*}
	Then, the comparison principle yields that $\underline{H}_f \leq \overline{H}_f$.
\end{remark}

\begin{lemma}
	The Perron solutions $\overline{H}_f$ and $\underline{H}_f$ are $F$-solutions in $\Omega$.
\end{lemma}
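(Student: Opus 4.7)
The plan is to establish $F$-harmonicity of $\overline{H}_f$ on every ball $B \subset\subset \Omega$, which by Theorem~\ref{equiv} is equivalent to being an $F$-solution; the argument for $\underline{H}_f$ is symmetric (increasing sequences of $F$-subharmonic functions in $\mathcal{L}_f$, together with the obvious analog of the Poisson modification). Both Perron solutions are automatically bounded, since the constants $\inf_{\partial\Omega}f$ and $\sup_{\partial\Omega}f$ lie in $\mathcal{L}_f$ and $\mathcal{U}_f$ respectively.

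Fix $x_0 \in \Omega$ and a ball $B \subset\subset \Omega$ containing $x_0$. First I would choose $u_k \in \mathcal{U}_f$ with $u_k(x_0) \to \overline{H}_f(x_0)$ and replace them by the monotone decreasing sequence $v_k := \min\{u_1,\dots,u_k\}$, still in $\mathcal{U}_f$ by Lemma~\ref{easy}(ii). Next I would apply the Poisson modification of Lemma~\ref{poisson} to obtain $w_k := P(v_k, B) \in \mathcal{U}_f$, $F$-harmonic in $B$, with $w_k \leq v_k$. Monotonicity of Poisson modification in its first argument (immediate from the infimum defining $u_B$) makes $\{w_k\}$ decreasing as well, and uniform boundedness combined with the stability and compactness parts of Theorem~\ref{stability} yields locally uniform convergence $w_k \searrow w$ on $B$ to an $F$-solution $w$ satisfying $w \geq \overline{H}_f$ and $w(x_0) = \overline{H}_f(x_0)$.

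The crux, and the main obstacle, is upgrading pointwise equality at $x_0$ to the identity $w \equiv \overline{H}_f$ on the whole of $B$. I would argue by contradiction: if $w(y_0) > \overline{H}_f(y_0)$ at some $y_0 \in B$, pick $u \in \mathcal{U}_f$ with $u(y_0) < w(y_0)$ and rerun the previous construction with $\tilde v_k := \min\{v_k, u\}$ in place of $v_k$, producing an $F$-harmonic limit $\tilde w \leq w$ on $B$ satisfying $\tilde w(x_0) = w(x_0)$ and $\tilde w(y_0) \leq u(y_0) < w(y_0)$. Uniform ellipticity then yields
\begin{align*}
\mathcal{P}^-_{\lambda,\Lambda}(D^2(w-\tilde w)) \leq F(D^2 w) - F(D^2 \tilde w) = 0
\end{align*}
in the viscosity sense, so $w - \tilde w \geq 0$ is a $\mathcal{P}^-$-supersolution attaining its minimum at the interior point $x_0$. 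The strong minimum principle for Pucci operators, a standard consequence of Harnack inequality, then forces $w \equiv \tilde w$ on $B$, contradicting $\tilde w(y_0) < w(y_0)$. The subtlety here is precisely that the difference of two $F$-harmonic functions is not itself $F$-harmonic, so one must descend to the Pucci setting before invoking the strong minimum principle; this is the only nontrivial analytic ingredient beyond what has already been established in this section. Once $w = \overline{H}_f$ on every such ball, $\overline{H}_f$ is $F$-harmonic throughout $\Omega$, and Theorem~\ref{equiv} concludes.
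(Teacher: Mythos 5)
Your proof is correct, but it takes a genuinely different route from the paper's. The paper also starts from minimizing sequences and Poisson modifications, but it selects a countable dense subset $X\subset B$, arranges (by a diagonal/minimum trick) that the limiting $F$-harmonic function $h$ equals $\overline{H}_f$ on all of $X$, and then closes the argument purely by continuity: for any $u\in\mathcal{U}_f$ the modification $U=P(u,B)$ satisfies $U\ge \overline{H}_f=h$ on the dense set $X$, hence $U\ge h$ everywhere by continuity of $U$ and $h$, giving $\overline{H}_f\ge h$. You instead pin down equality at a single point and propagate it by the strong minimum principle applied to $w-\tilde w$. That step is the one place where you are too casual: for merely continuous viscosity solutions, the assertion that the difference of two $F$-harmonic functions satisfies $\mathcal{P}^-_{\lambda/n,\Lambda}(D^2(w-\tilde w))\le 0$ in the viscosity sense is \emph{not} a direct consequence of the pointwise uniform ellipticity inequality; it is Theorem 5.3 of \cite{CC95} (note also the constant $\lambda/n$ rather than $\lambda$), proved via sup/inf-convolutions and Jensen's lemma. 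Since that theorem does hold for general uniformly elliptic $F$ without convexity, your argument is valid once you cite it, and the rest of your construction ($P(v,B)\in\mathcal{U}_f$, monotonicity of the Poisson modification, $\tilde w\le u$, $\tilde w(x_0)=w(x_0)$) checks out. The trade-off is clear: the paper's dense-set argument stays entirely within the elementary comparison/compactness toolkit of Section 2, while yours is structurally leaner but imports a nontrivial piece of the fully nonlinear regularity theory.
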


\begin{proof}
	This proof is based on the argument used in \cite{KL96}. Fix an open ball $B$ with $B \subset\subset \Omega$. Next, choose a countable, dense subset 
	$X=\{x_1, x_2, ...\}$ of $B$ and then for each $j=1,2,...$, choose $u_{i,j} \in \mathcal{U}_f$ such that 
	\begin{align*}
	\lim_{i \to \infty}u_{i,j}(x_j)=\overline{H}_f(x_j).
	\end{align*} 
	Moreover, replacing $u_{i,j+1}$ by $\min\{u_{i,j}, u_{i, j+1}\}$ if necessary, we have
	\begin{align*}
	\lim_{i \to \infty} u_{i,j}(x_k)=\overline{H}_f(x_k),
	\end{align*}
	for each $k=1,2...,j$ and each $j$. Now, let $U_{i,j}:=P(u_{i,j}, B)$ be the Poisson modification of $u_{i, j}$ in $B$. Then we observe that $\overline{H}_f \leq U_{i,j} \leq u_{i,j}$ and $U_{i,j}$ is $F$-harmonic in $B$. By compactness (Theorem \ref{stability} (ii)), $U_{i,j}$ converges locally uniformly to $F$-harmonic $v_j$ in $B$ (passing to a subsequence, if necessary). Again by compactness, $v_j$ converges locally uniformly to $F$-harmonic $h$ in $B$.
	
	By the construction of $h$, it follows immediately that 
	\begin{align*}
	\overline{H}_f \leq h
	\end{align*}
	in $B$ and $\overline{H}_f=h$ on $X$. For any $u \in \mathcal{U}_f$ and its Poisson modification $U=P(u, B)$, we have $u \geq U \geq \overline{H}_f$. Since $U \geq h$ on $X$ (which is dense in $B$) and $U, h$ are continuous in $B$, it follows that $U \geq h$ in $B$. Thus, $u \geq h$ in $B$ which implies that 
	\begin{align*}
	\overline{H}_f \geq h
	\end{align*}
	in $B$. Hence, $\overline{H}_f=h$ is $F$-harmonic in $\Omega$ and a similar argument for $\underline{H}_f$ completes the proof.
\end{proof}

We emphasize that although we proved that $F(D^2 \overline{H}_f)=0$ in $\Omega$, we cannot guarantee that $\overline{H}_f$ enjoys the boundary condition of the Dirichlet problem, $\overline{H}_f=f$ on $\partial \Omega$. To investigate the boundary behavior of the Perron solutions and ensure the solvability of the Dirichlet problem, we need to introduce further concepts, namely, a \textit{regular point} and a \textit{barrier function}.

\begin{definition}[A regular point]
	A boundary point $x_0 \in \partial \Omega$ is ($F$-)\textit{regular} with respect to $\Omega$, if 
	\begin{align*}
	\lim_{\Omega \ni y \to x_0} \overline{H}_f(y)=f(x_0) \quad \textrm{and} \quad \lim_{\Omega \ni y \to x_0} \underline{H}_f(y)=f(x_0)
	\end{align*}
	whenever $f \in C(\partial \Omega)$. An open and bounded set $\Omega$ is called \textit{regular} if each $x_0 \in \partial \Omega$ is a regular boundary point. 
\end{definition}

\begin{remark}
	Suppose that an operator $\mathcal{M}$ satisfies $\mathcal{M}[-u]=-\mathcal{M}[u]$; for example, any linear operator $L$ and $p$-Laplcian operators $\Delta_p$ possess this property. Then we have 
	\begin{align*}
	\overline{H}_f=-\underline{H}_{-f},
	\end{align*}
	and so in this case, we can equivalently call $x_0 \in \partial \Omega$ is regular if
	\begin{align*}
	\lim_{\Omega \ni y \to x_0} \overline{H}_f(y)=f(x_0)
	\end{align*}
	whenever $f \in C(\partial \Omega)$. Nevertheless, for the general fully nonlinear operator $F$, we do not have this property. Therefore, it seems that we have to require both conditions simultaneously, when we define a regular point for $F$. To the best of our knowledge, it is unknown whether the two conditions in the definition are redundant. One possible approach to show that only one condition is essential is to prove that $f$ is resolutive whenever $f$ is continuous on $\partial \Omega$; see Definition \ref{resolutive} for the definition of resolutivity.
\end{remark}

Before we define a barrier function, which characterizes a regular boundary point, we shortly deal with the resolutivity of boundary data:
\begin{definition}[Resolutivity] \label{resolutive}
	We say that a bounded function $f$ on $\partial \Omega$ is ($F$-)\textit{resolutive} if the upper and the lower Perron solutions $\overline{H}_f$ and $\underline{H}_f$ coincide in $\Omega$. When $f$ is resolutive, we write $H_f:=\overline{H}_f=\underline{H}_f$.
\end{definition}

\begin{lemma}
	Let $\Omega$ be a bounded open set of $\mathbb{R}^n$, let $f$ and $g$ be bounded functions on $\partial \Omega$, and let $c$ be any real number.
	\begin{enumerate}[(i)]
		\item If $f=c$ on $\partial \Omega$, then $f$ is resolutive and $H_f=c$ in $\Omega$.
		
		\item $\overline{H}_{f+c}=\overline{H}_f+c$ and $\underline{H}_{f+c}=\underline{H}_f+c$. If $f$ is resolutive, then $f+c$ is resolutive and $H_{f+c}=H_f+c$.
		
		\item If $c>0$, then $\overline{H}_{cf}=c\overline{H}_f$ and $\underline{H}_{cf}=c\underline{H}_f$. If $f$ is resolutive, then $cf$ is resolutive and $H_{cf}=cH_f$ for $c \geq 0$.
		
		\item If $f \leq g$, then $\overline{H}_f \leq \overline{H}_g$ and $\underline{H}_f \leq \underline{H}_g$.
	\end{enumerate}
\end{lemma}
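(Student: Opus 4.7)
The plan is to verify each property by comparing the upper and lower classes of suitable boundary data, leveraging Lemma \ref{easy} (which tells us that nonnegative affine combinations of $F$-superharmonic functions stay $F$-superharmonic) and the comparison principle $\underline{H}_f \leq \overline{H}_f$. The only slightly non-trivial preliminary observation is that constant functions are $F$-harmonic: from (F2), $F(0)=F(t\cdot 0)=tF(0)$ for every $t>0$, so $F(0)=0$, whence any constant is a classical (hence viscosity) $F$-solution, and by Theorem \ref{equiv} it is $F$-harmonic.

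For (i), the constant function $c$ belongs to both $\mathcal{U}_f$ and $\mathcal{L}_f$ when $f\equiv c$, because it is $F$-harmonic, bounded, and matches $f$ continuously on $\partial\Omega$. This gives $\overline{H}_f\leq c\leq \underline{H}_f$, and combined with $\underline{H}_f\leq \overline{H}_f$ from the comparison principle, we conclude $H_f=c$.

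For (ii) and (iii), the plan is to show the inclusions $\mathcal{U}_{f+c}=\mathcal{U}_f+c$ and (for $c>0$) $\mathcal{U}_{cf}=c\,\mathcal{U}_f$, and similarly for the lower class. If $u\in\mathcal{U}_f$, then by Lemma \ref{easy}(i) the functions $u+c$ and (for $c\geq 0$) $cu$ are $F$-superharmonic, remain bounded below, and satisfy
\begin{align*}
\liminf_{\Omega\ni y\to x}(u(y)+c)\geq f(x)+c,\qquad \liminf_{\Omega\ni y\to x}(cu(y))\geq cf(x)
\end{align*}
for every $x\in\partial\Omega$, placing them in $\mathcal{U}_{f+c}$ and $\mathcal{U}_{cf}$ respectively. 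The reverse inclusions follow by applying the same observation to the perturbations $-c$ and $c^{-1}$. Taking infima yields $\overline{H}_{f+c}=\overline{H}_f+c$ and $\overline{H}_{cf}=c\overline{H}_f$, and the analogous argument with suprema gives the lower-Perron identities. When $f$ is resolutive, combining the upper and lower identities immediately gives resolutivity of $f+c$ and $cf$ with the claimed identification. The case $c=0$ in (iii) is handled directly by part (i), since $cf\equiv 0$.

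For (iv), the point is monotonicity of the classes under the boundary data: if $f\leq g$, then every $u\in\mathcal{U}_g$ satisfies $\liminf u\geq g\geq f$ on $\partial\Omega$, so $\mathcal{U}_g\subset\mathcal{U}_f$, and hence $\overline{H}_f=\inf\mathcal{U}_f\leq\inf\mathcal{U}_g=\overline{H}_g$; symmetrically $\mathcal{L}_f\subset\mathcal{L}_g$ gives $\underline{H}_f\leq\underline{H}_g$. No real obstacle is anticipated: the content of the lemma is essentially the compatibility of Perron's construction with affine transformations of the data, and Lemma \ref{easy}, together with $F(0)=0$, provides exactly what is needed at each step.
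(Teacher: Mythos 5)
Your proof is correct: the paper states this lemma without proof, and your argument — constants are $F$-harmonic since $F(0)=0$ by (F2), the upper and lower classes transform as $\mathcal{U}_{f+c}=\mathcal{U}_f+c$ and $\mathcal{U}_{cf}=c\,\mathcal{U}_f$ via Lemma \ref{easy}(i), and monotonicity of the classes gives (iv) — is exactly the standard route the authors evidently intend, with the $c=0$ case of (iii) correctly reduced to (i). No gaps.
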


Note that the resolutivity of $f$ does not imply
\begin{align*}
\lim_{y \to x}{H}_f(y)=f(x)
\end{align*}
for $x \in \partial \Omega$. However, the converse is true in some sense:

\begin{lemma} \label{resosuff}
	Let $\Omega$ be an open and bounded subset of $\mathbb{R}^n$ and $f$ be a bounded function on $\partial \Omega$. Suppose that there exists $F$-harmonic $h$ in $\Omega$ such that
	\begin{align*}
	\lim_{\Omega \ni y \to x}h(y)=f(x)
	\end{align*}
	for any $x \in \partial \Omega$. Then $\overline{H}_f=h=\underline{H}_f$. In particular, $f$ is resolutive.
\end{lemma}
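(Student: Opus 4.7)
The plan is to show that $h$ itself belongs simultaneously to the upper class $\mathcal{U}_f$ and the lower class $\mathcal{L}_f$, which together with the inequality $\underline{H}_f\le\overline{H}_f$ recorded in the Perron's method remark forces $\overline{H}_f=h=\underline{H}_f$.

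First I would verify that $h$ is bounded on $\Omega$. Since $f$ is bounded on $\partial\Omega$, write $|f|\le M$. Using (F2) with $t\to 0^+$, constants satisfy $F(0)=0$, so the constant functions $\pm M$ are $F$-harmonic (hence both $F$-superharmonic and $F$-subharmonic). The hypothesis $\lim_{\Omega\ni y\to x}h(y)=f(x)$ gives
\begin{align*}
\limsup_{\Omega\ni y\to x} h(y)=f(x)\le M\quad\text{and}\quad \liminf_{\Omega\ni y\to x} h(y)=f(x)\ge -M
\end{align*}
for every $x\in\partial\Omega$. Applying the comparison principle for $F$-super/subharmonic functions (Theorem \ref{cpsuperhar}) to the pairs $(M,h)$ and $(h,-M)$ yields $-M\le h\le M$ in $\Omega$.

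Next I would check membership in the two classes. Since $h$ is $F$-harmonic, it is both $F$-superharmonic and $F$-subharmonic by definition. Together with the bound $|h|\le M$ and the boundary limit hypothesis (which gives equality in both the $\liminf\ge f$ and $\limsup\le f$ conditions), this shows $h\in\mathcal{U}_f$ and $h\in\mathcal{L}_f$. Consequently,
\begin{align*}
\overline{H}_f=\inf_{u\in\mathcal{U}_f}u\;\le\;h\;\le\;\sup_{v\in\mathcal{L}_f}v=\underline{H}_f.
\end{align*}
Combined with $\underline{H}_f\le\overline{H}_f$, we conclude $\overline{H}_f=h=\underline{H}_f$, so $f$ is resolutive and $H_f=h$.

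There is no real obstacle here beyond unpacking definitions; the only point that requires a moment of thought is the boundedness of $h$, which is why I would handle it first as a short preliminary step rather than taking it for granted. Everything else follows mechanically from the fact that an $F$-harmonic function is simultaneously $F$-super- and $F$-subharmonic, so it is automatically an admissible competitor in both Perron classes once the boundary values match $f$ in the limit.
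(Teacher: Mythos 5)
Your proof is correct and takes exactly the same route as the paper, whose entire argument is the one line ``Since $h \in \mathcal{U}_f \cap \mathcal{L}_f$, we have $\overline{H}_f \leq h \leq \underline{H}_f$.'' Your preliminary step verifying $|h|\le M$ by comparison with the constant functions $\pm M$ simply makes explicit the boundedness requirement in the definitions of $\mathcal{U}_f$ and $\mathcal{L}_f$ that the paper leaves implicit.
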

 
\begin{proof}
	Since $h \in \mathcal{U}_f \cap \mathcal{L}_f$, we have $\overline{H}_f \leq h \leq \underline{H}_f.$
\end{proof}

\begin{lemma}\label{reslem2}
	If $u$ is a bounded $F$-superharmonic (or $F$-subharmonic) function on the bounded open set $\Omega$ such that $f(x)=\lim_{\Omega \ni y \to x}u(y)$ exists for all $x \in \partial \Omega$, then $f$ is a resolutive boundary function.
\end{lemma}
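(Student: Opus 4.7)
The plan is to show that the upper Perron solution $\overline{H}_f$ itself belongs to the lower class $\mathcal{L}_f$; combined with the standard inequality $\underline{H}_f \le \overline{H}_f$, this forces equality and hence resolutivity. As a preliminary, note that since $u$ is bounded and $\lim_{\Omega \ni y \to x} u(y) = f(x)$ exists at every $x \in \partial\Omega$, the boundary function $f$ is bounded, so both $\overline{H}_f$ and $\underline{H}_f$ are defined and, by the previous lemma, are $F$-harmonic in $\Omega$.

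In the $F$-superharmonic case, I would first verify that $u \in \mathcal{U}_f$: $u$ is $F$-superharmonic by hypothesis, bounded below by boundedness, and satisfies $\liminf_{y \to x} u(y) = \lim_{y \to x} u(y) = f(x)$ for all $x \in \partial\Omega$. Hence $\overline{H}_f \le u$ in $\Omega$, and this single inequality is the crucial input: it shows simultaneously that $\overline{H}_f$ is bounded above (by $\sup u$) and that $\limsup_{y \to x}\overline{H}_f(y) \le \lim_{y \to x} u(y) = f(x)$ for every $x \in \partial\Omega$. Being $F$-harmonic, $\overline{H}_f$ is in particular $F$-subharmonic, so these three properties place $\overline{H}_f$ inside $\mathcal{L}_f$. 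This gives $\overline{H}_f \le \underline{H}_f$, which together with $\underline{H}_f \le \overline{H}_f$ yields $\overline{H}_f = \underline{H}_f$, i.e., $f$ is resolutive.

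The $F$-subharmonic case is parallel: $u \in \mathcal{L}_f$ gives $u \le \underline{H}_f$, so $\underline{H}_f$ is bounded below and $\liminf_{y \to x}\underline{H}_f(y) \ge f(x)$ on $\partial\Omega$; as $F$-harmonic it is also $F$-superharmonic, so $\underline{H}_f \in \mathcal{U}_f$, which forces $\overline{H}_f \le \underline{H}_f$ and again equality. The only conceptual obstacle worth flagging is the absence of antisymmetry $\widetilde{F} = F$ for general fully nonlinear $F$, which blocks the Laplacian/$p$-Laplacian trick of converting between the two cases via $u \mapsto -u$; however, I expect no real difficulty, because $F$-harmonicity of the Perron solution already supplies both one-sided structures at once, and the one-sided domination between $u$ and the relevant Perron solution provides exactly the boundary $\limsup$ (respectively $\liminf$) control needed to certify membership in the opposite Perron class.
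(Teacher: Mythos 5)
Your proof is correct and follows essentially the same route as the paper: place $u$ in $\mathcal{U}_f$ (resp. $\mathcal{L}_f$), use $\overline{H}_f \le u$ together with the $F$-harmonicity of $\overline{H}_f$ to conclude $\overline{H}_f \in \mathcal{L}_f$, and combine with $\underline{H}_f \le \overline{H}_f$. The only difference is that you spell out the subharmonic case explicitly, which the paper leaves as an analogous argument.
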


\begin{proof}
	Obviously, we have $u \in \mathcal{U}_f$ and so $\overline{H}_f \leq u$ in $\Omega$. Then since $\overline{H}_f$ is $F$-harmonic in $\Omega$ and 
	\begin{align*}
	 \limsup_{\Omega \ni y \to x}\overline{H}_f(y) \leq \lim_{\Omega \ni y \to x}u(y)=f(x),
	\end{align*}
	we have $\overline{H}_f \in \mathcal{L}_f$, which implies that $\overline{H}_f \leq \underline{H}_f$. Because $\underline{H}_f \leq \overline{H}_f$ always holds, we conclude that $f$ is resolutive. An analogous argument works for the $F$-subharmonic case.
\end{proof}

\subsection{Characterization of a regular point}
\begin{definition}[Barrier]
	Let $x_0 \in \partial \Omega$. A function $w^+ : \Omega \to \mathbb{R}$ [resp. $w^-$] is an \textit{upper barrier} [resp. \textit{lower barrier}] in $\Omega$ at the point $x_0$ if  
	\begin{enumerate}[(i)]
		\item $w^+$ [resp. $w^{-}$] is $F$-superharmonic [resp. $F$-subharmonic] in $\Omega$;
		\item $\liminf_{\Omega \ni y \to x}w^+(y)>0$ [resp. $\limsup_{\Omega \ni y \to x}w^-(y)<0$] for each $x \in \partial \Omega \setminus \{x_0\}$;
		\item $\lim_{\Omega \ni y \to x_0}w^+(y)=0$. [resp. $\lim_{\Omega \ni y \to x_0}w^-(y)=0$.]
	\end{enumerate}
\end{definition}

Observe that the maximum principle indicates that an upper barrier $w^+$ is positive in $\Omega$ and a lower barrier $w^-$ is negative in $\Omega$.  Moreover, under the condition (F2), $cw^+$ is still an upper barrier for any constant $c>0$ and an upper barrier $w^+$. See also \cite{RR94}.

Now we can deduce that a regular boundary point is characterized by the existence of upper and lower barriers.

\begin{theorem}\label{barchar}
	Let $x_0 \in \partial \Omega$. Then the following are equivalent:
	\begin{enumerate}[(i)]
		\item $x_0$ is regular;
		\item there exist an upper barrier and a lower barrier at $x_0$.
	\end{enumerate}
\end{theorem}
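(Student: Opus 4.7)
The proof splits into the two implications (i)$\Leftrightarrow$(ii), and the candidates for the two barriers (as well as for the sub/supersolutions used to squeeze the Perron solutions in the other direction) are all built out of the distance $|y-x_0|$.

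For (ii)$\Rightarrow$(i), given barriers $w^\pm$ and $f\in C(\partial\Omega)$, the plan is to squeeze the two Perron solutions around $f(x_0)$. Fix $\varepsilon>0$; continuity of $f$ at $x_0$ gives $\delta>0$ with $|f-f(x_0)|<\varepsilon$ on $\partial\Omega\cap B_\delta(x_0)$. The compact set $\partial\Omega\setminus B_\delta(x_0)$ is covered by finitely many balls on which the pointwise positive $\liminf$ of $w^+$ yields a uniform lower bound $w^+\geq\eta>0$ on some neighbourhood in $\Omega$. Then
\begin{equation*}
u_+(y):=f(x_0)+\varepsilon+Mw^+(y)
\end{equation*}
is $F$-superharmonic by Lemma~\ref{easy}(i), and for $M$ large enough $u_+\in\mathcal{U}_f$: the three boundary cases $x=x_0$, $x\in\partial\Omega\cap B_\delta(x_0)\setminus\{x_0\}$, and $x\in\partial\Omega\setminus B_\delta(x_0)$ are checked one by one using the three defining properties of $w^+$. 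Hence $\overline{H}_f\leq u_+$ and $\limsup_{y\to x_0}\overline{H}_f\leq f(x_0)+\varepsilon$. The symmetric construction $u_-:=f(x_0)-\varepsilon+Mw^-\in\mathcal{L}_f$ gives $\liminf_{y\to x_0}\underline{H}_f\geq f(x_0)-\varepsilon$, and letting $\varepsilon\to 0$ together with $\underline{H}_f\leq\overline{H}_f$ delivers the regularity of $x_0$.

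For (i)$\Rightarrow$(ii), the natural candidates are the Perron solutions with the boundary data $\pm|x-x_0|$. Set $f(x):=|x-x_0|$ and $w^+:=\overline{H}_f$; this is $F$-harmonic in $\Omega$, and $\lim_{y\to x_0}w^+=0$ by the regularity of $x_0$. The crucial observation is that $v(y):=|y-x_0|$ is itself an $F$-subsolution in $\Omega$ (recall $x_0\notin\Omega$, so $v$ is smooth there): its Hessian
\begin{equation*}
D^2v(y)=\frac{1}{|y-x_0|}\bigl(I-\hat n\otimes\hat n\bigr),\qquad \hat n:=\frac{y-x_0}{|y-x_0|},
\end{equation*}
is positive semidefinite and nonzero, so (F1) together with $F(0)=0$ forces $F(D^2v)\geq\lambda\|D^2v\|>0$. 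Since $v$ is continuous, bounded, and its boundary trace equals $f$, one has $v\in\mathcal{L}_f$, and therefore $w^+\geq\underline{H}_f\geq v=|y-x_0|$ in $\Omega$. This immediately yields $\liminf_{y\to x}w^+(y)\geq|x-x_0|>0$ for every $x\in\partial\Omega\setminus\{x_0\}$. The lower barrier is built dually: with $g(x):=-|x-x_0|$ and $w^-:=\underline{H}_g$, the function $\tilde v(y):=-|y-x_0|$ is an $F$-supersolution by the mirror Hessian calculation and lies in $\mathcal{U}_g$, so $w^-\leq\overline{H}_g\leq\tilde v$ and $\limsup_{y\to x}w^-(y)\leq-|x-x_0|<0$ for $x\neq x_0$, while $\lim_{y\to x_0}w^-=0$ comes again from regularity.

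The only genuine technicality is the uniform positive lower bound on $w^+$ used in (ii)$\Rightarrow$(i): the barrier definition only supplies a pointwise positive $\liminf$ at each boundary point different from $x_0$, and promoting this to a uniform bound on a neighbourhood of the compact set $\partial\Omega\setminus B_\delta(x_0)$ requires a finite-covering argument. This is precisely where the paper's \emph{global} (rather than merely local) form of the barrier condition is essential; once this bound is in hand, the rest of (ii)$\Rightarrow$(i) is just careful bookkeeping with $\mathcal{U}_f$ and $\mathcal{L}_f$, and (i)$\Rightarrow$(ii) reduces to the explicit Hessian computation above.
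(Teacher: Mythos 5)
Your proof is correct and takes essentially the same route as the paper's: for (ii) $\Rightarrow$ (i) you squeeze the Perron solutions by $f(x_0)\pm\varepsilon\pm M w^{\pm}$ exactly as the paper does (the paper obtains the uniform positive lower bound on $\partial\Omega\setminus B_{\delta}(x_0)$ from the lower semicontinuity of $x\mapsto\liminf_{\Omega\ni y\to x}w^{+}(y)$, which is the same content as your finite-cover argument), and for (i) $\Rightarrow$ (ii) the paper takes the globally smooth subsolution $d(y)=|y-x_0|^{2}$ with Hessian $2I$ where you take $|y-x_0|$, the rest of the construction via Perron solutions of $\pm d$ being identical.
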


\begin{proof}
	\begin{enumerate}[]
		\item (ii) $\implies$ (i) For $f \in C(\partial \Omega)$ and $\varepsilon>0$, there is $\delta>0$ such that $|x-x_0| \leq \delta$ with $x \in \partial \Omega$ implies $|f(x)-f(x_0)| < \varepsilon$. Moreover, for $M:=\sup_{\partial \Omega} |f|$, there exists a large number $K>0$ such that 
		\begin{align*}
		K\cdot\liminf_{\Omega \ni y \to x} w^+(y) \geq 2M \ \textrm{for all $x \in \partial \Omega$ with $|x-x_0| \geq \delta$.}
		\end{align*}
		Here we used that $x \mapsto \liminf_{\Omega \ni y \to x}w^+(y)$ is lower semi-continuous on $\partial \Omega$.
		Then since $Kw^++f(x_0)+\varepsilon \in \mathcal{U}_f$, we have
		\begin{align*}
		\overline{H}_f(y) \leq Kw^+(y)+f(x_0)+\varepsilon,
		\end{align*}
		which implies that
		\begin{align*}
		\limsup_{\Omega \ni y \to x_0}\overline{H}_f(y) \leq f(x_0).
		\end{align*}
		An analogous argument leads to 
		\begin{align*}
		\liminf_{\Omega \ni y \to x_0}\underline{H}_f(y) \geq f(x_0).
		\end{align*}
		Since $\underline{H}_f \leq \overline{H}_f$, we conclude that
		\begin{align*}
		\lim_{\Omega \ni y \to x_0}\overline{H}_f(y)=f(x_0)=\lim_{\Omega \ni y \to x_0}\underline{H}_f(y),
		\end{align*}
		i.e. $x_0$ is a regular boundary point.

		\item (i) $\implies$ (ii) Define a distance function $d$ by 
		\begin{align*}
		d(y):=|y-x_0|^2
		\end{align*}
		so that $d$ is continuous, non-negative and $d(y)=0$ if and only if $y=x_0$. Moreover, since $D^2d=2I$, we have $F(D^2d)=2F(I)>0$, i.e. $d$ is $F$-subharmonic.
		
		 Then letting $w^+:=\underline{H}_d$, we have $w^+$ is $F$-harmonic  in $\Omega$ and it follows from $d \in \mathcal{L}_d$ that $w^+ \geq d$ in $\Omega$. Thus, for any $x \in \partial \Omega \setminus \{x_0\}$,
		 \begin{align*}
		 \liminf_{\Omega \ni y \to x}w^+(y)\geq d(x)=|x-x_0|^2>0.
		 \end{align*}
		 Furthermore, since $x_0$ is regular, we have
		\begin{align*}
		\lim_{\Omega \ni y \to x_0} w^+(y)=d(x_0)=0,
		\end{align*}
		and so $w^+$ is a desired upper barrier. The existence of a lower barrier is guaranteed by considering $\widetilde{d}(y):=-d(y)=-|y-x_0|^2$ and $w^-:=\overline{H}_{\widetilde{d}}$.
		
	\end{enumerate}
	
\end{proof}

Indeed, the barrier characterization is a \textit{local} property:
\begin{lemma} \label{local0}
	Let $x_0 \in \partial \Omega$ and $G \subset \Omega$ be open with $x_0 \in \partial G$. If $x_0$ is regular with respect to $\Omega$, then $x_0$ is regular with respect to $G$.
\end{lemma}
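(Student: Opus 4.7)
\medskip

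The plan is to localize the upper and lower barriers provided by Theorem \ref{barchar}. Since $x_0$ is regular with respect to $\Omega$, that theorem supplies an upper barrier $w^+$ and a lower barrier $w^-$ for $\Omega$ at $x_0$. I will check that the restrictions $w^+|_G$ and $w^-|_G$ serve as an upper, respectively lower, barrier for $G$ at $x_0$, and then invoke the implication (ii)$\Rightarrow$(i) of Theorem \ref{barchar} to conclude that $x_0$ is regular with respect to $G$.

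For the upper barrier, the three defining properties transfer as follows. Property (i), namely $F$-superharmonicity of $w^+$ in $G$, is immediate because $G\subset\Omega$ and $F$-superharmonicity is a local property. Property (iii) at $x_0$ holds because $G\subset\Omega$, so $\lim_{G\ni y\to x_0}w^+(y)=\lim_{\Omega\ni y\to x_0}w^+(y)=0$. Property (ii) is the only one requiring care: for $x\in\partial G\setminus\{x_0\}$ I split into two cases. If $x\in\partial G\cap\Omega$, then by Lemma \ref{lsc} (lower semicontinuity of $F$-superharmonic functions at interior points) and the remark following the definition of a barrier (which asserts that the maximum principle forces $w^+>0$ throughout $\Omega$), one has
\[
\liminf_{G\ni y\to x}w^+(y)\ \geq\ w^+(x)\ >\ 0.
\]
If instead $x\in\partial G\cap\partial\Omega$, then the original barrier property of $w^+$ for $\Omega$ gives $\liminf_{\Omega\ni y\to x}w^+(y)>0$, and since $G\subset\Omega$,
\[
\liminf_{G\ni y\to x}w^+(y)\ \geq\ \liminf_{\Omega\ni y\to x}w^+(y)\ >\ 0.
\]

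The verification for the lower barrier $w^-$ is entirely symmetric, using upper semicontinuity of $F$-subharmonic functions and the fact that $w^-<0$ in $\Omega$ by the maximum principle. Having produced both an upper and a lower barrier for $G$ at $x_0$, Theorem \ref{barchar} yields regularity of $x_0$ with respect to $G$.

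I do not anticipate a serious obstacle: the statement is essentially a sanity check that the barrier characterization is stable under passing to a subdomain sharing the boundary point $x_0$. The only mildly delicate point is handling $x\in\partial G\cap\Omega$ (i.e.\ boundary points of $G$ that lie in the interior of $\Omega$), which is dispatched by combining lower semicontinuity with the strict positivity of $w^+$ in $\Omega$; no further machinery is needed.
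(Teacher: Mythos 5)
Your proposal is correct and follows exactly the paper's argument: restrict the upper and lower barriers furnished by Theorem \ref{barchar} to $G$ and verify they remain barriers there. You simply spell out the verification (in particular the case $x\in\partial G\cap\Omega$, via lower semicontinuity and strict positivity of $w^+$) that the paper leaves implicit.
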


\begin{proof}
	By Theorem \ref{barchar}, there exist an upper barrier $w^+$ and a lower barrier $w^-$ with respect to $\Omega$ at $x_0$. Then $w^+|_G$ and $w^-|_G$ become the desired barriers with respect to $G$ at $x_0$. Again by Theorem \ref{barchar}, $x_0$ is regular with respect to $G$.
\end{proof}

\begin{lemma}\label{local}
	Let $x_0 \in \partial \Omega$ and $B$ be a ball containing $x_0$. Then $x_0$ is regular with respect to $\Omega$ if and only if $x_0$ is regular with respect to $B \cap \Omega$.
\end{lemma}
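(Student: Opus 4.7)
The forward implication (``$\Omega$ regular $\Rightarrow$ $B \cap \Omega$ regular'') is immediate from Lemma \ref{local0}, applied with the open subset $G := B \cap \Omega \subset \Omega$, for which $x_0 \in \partial G$. So the content of the lemma lies in the reverse direction, which I plan to prove via the barrier characterization of Theorem \ref{barchar}: given upper and lower barriers at $x_0$ with respect to $B \cap \Omega$, I will extend them to barriers with respect to all of $\Omega$ by a pasting argument and conclude.

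Concretely, let $w^+$ be an upper barrier at $x_0$ in $B \cap \Omega$ (with a parallel story for $w^-$). I would first choose a smaller ball $B'$ with $x_0 \in B' \subset\subset B$, so that the set $K := \partial B' \cap \overline{\Omega}$ is compact and separated from $x_0$. Using the positivity of $w^+$ on $B \cap \Omega$, its lower semi-continuity there (Lemma \ref{lsc}), and the defining condition $\liminf w^+ > 0$ at every point of $\partial(B \cap \Omega) \setminus \{x_0\}$, I would extract a uniform lower bound $m > 0$ for the boundary-$\liminf$ of $w^+$ along $K$. Then set
\begin{align*}
\tilde{w}^+ := \begin{cases} \min\{w^+, m\} & \text{in } B' \cap \Omega, \\ m & \text{in } \Omega \setminus B'. \end{cases}
\end{align*}
Since the constant $m$ is $F$-superharmonic on $\Omega$ and $w^+$ is $F$-superharmonic on $B \cap \Omega$ (hence on $B' \cap \Omega$), the pasting lemma (Lemma \ref{pasting}) yields that $\tilde{w}^+$ is $F$-superharmonic on $\Omega$, provided $\tilde{w}^+$ is lower semi-continuous. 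The barrier conditions at $x_0$ with respect to $\Omega$ then follow routinely: near $x_0$ we have $\tilde{w}^+ \leq w^+ \to 0$; at any other $x \in \partial \Omega$, either $x \notin \overline{B'}$, in which case $\tilde{w}^+ \equiv m$ locally, or $x \in \overline{B'} \cap \partial\Omega \subset \partial(B \cap \Omega)\setminus\{x_0\}$, in which case positivity of $\liminf \tilde{w}^+$ is inherited from $w^+$. An analogous construction with $\max\{\cdot, -m\}$ produces a lower barrier $\tilde{w}^-$ from $w^-$, and Theorem \ref{barchar} closes the argument.

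The one delicate step—really the only obstacle—is verifying lower semi-continuity of $\tilde{w}^+$ at points $y \in \partial B' \cap \Omega$, where the definition jumps from $\min\{w^+, m\}$ inside $B'$ to the constant $m$ outside. This is precisely what the choice of $m$ as a lower bound for $w^+$ on $K$ is designed to ensure: since $y$ is an interior point of $B \cap \Omega$ (because $\overline{B'} \subset B$), lower semi-continuity of $w^+$ gives $\liminf_{z \to y} w^+(z) \geq w^+(y) \geq m$, so $\min\{w^+(z), m\} \to m = \tilde{w}^+(y)$. Once this is in hand, pasting and barrier verification are direct.
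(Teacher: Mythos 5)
Your proposal is correct and follows essentially the same route as the paper: one direction by Lemma \ref{local0}, the other by pasting $\min\{w^+,m\}$ with the constant $m$ via Lemma \ref{pasting} and invoking Theorem \ref{barchar}. Your only deviation is pasting along a slightly smaller sphere $\partial B'$ with $\overline{B'}\subset B$ rather than along $\partial B$ itself, which makes the lower semi-continuity check at the gluing interface a bit cleaner (there $w^+$ is genuinely defined and l.s.c.) but does not change the argument.
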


\begin{proof}
	By Lemma \ref{local0}, one direction is immediate. For the opposite direction, suppose that $x_0$ is regular with respect to $B \cap \Omega$. Then there exist an upper barrier $w^+$ and a lower barrier $w^-$ with respect to $B \cap \Omega$. If we let $m:=\min_{\partial B \cap \Omega} w^+>0$ (the minimum exists because $w^+$ is lower semi-continuous), then the pasting lemma, Lemma \ref{pasting}, shows that
	\begin{align*}
	s^+:=
	\left\{ \begin{array}{ll} 
	\min\{w^+, m\} & \textrm{in $B \cap \Omega$},\\
	m & \textrm{in $\Omega \setminus B$},
	\end{array} \right.
	\end{align*}
	is $F$-superharmonic in $\Omega$. One can easily verify that $s^+$ is an upper barrier with respect to $\Omega$ at $x_0$. Similarly, a lower barrier $s^-$ can be constructed.
\end{proof}

The barrier characterization leads to another useful corollary, which enables us to write $x_0$ is regular instead of $F$-regular, without ambiguity.
\begin{corollary}\label{ftildef}
	A boundary point $x_0 \in \partial \Omega$ is $F$-regular if and only if $x_0$ is $\widetilde{F}$-regular.
\end{corollary}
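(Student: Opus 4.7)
The plan is to exploit the barrier characterization of regularity (Theorem \ref{barchar}) together with a simple duality between $F$ and $\widetilde{F}$ at the level of super/subsolutions. The key observation is that the assignment $u \mapsto -u$ exchanges $F$-supersolutions with $\widetilde{F}$-subsolutions (and vice versa), and hence, by Theorem \ref{equiv}, exchanges $F$-superharmonic functions with $\widetilde{F}$-subharmonic functions.

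First I would verify the viscosity-level statement: if $u$ is an $F$-supersolution in $\Omega$, then $v:=-u$ is an $\widetilde{F}$-subsolution in $\Omega$. Indeed, if $v-\psi$ attains a local maximum at $x_0$, then $u-(-\psi)$ attains a local minimum at $x_0$, so $F(D^2(-\psi)(x_0))\leq 0$, which by the definition of $\widetilde F$ rearranges to
\begin{align*}
\widetilde{F}(D^2\psi(x_0))=-F(-D^2\psi(x_0))\geq 0.
\end{align*}
The same computation run backwards (and with the roles of sub/super interchanged) gives the converse and the subsolution version. Combining with Theorem \ref{equiv} (applied to both $F$ and $\widetilde F$, which is legitimate since $\widetilde F$ satisfies (F1)-(F2) whenever $F$ does), we obtain: $u$ is $F$-superharmonic if and only if $-u$ is $\widetilde{F}$-subharmonic, and $u$ is $F$-subharmonic if and only if $-u$ is $\widetilde{F}$-superharmonic.

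Next I would translate this to barriers. Let $w^+$ be an upper $F$-barrier at $x_0$, so $w^+$ is $F$-superharmonic, $\liminf_{\Omega\ni y\to x}w^+(y)>0$ for $x\in\partial\Omega\setminus\{x_0\}$, and $\lim_{\Omega\ni y\to x_0}w^+(y)=0$. Setting $w:=-w^+$, the duality above makes $w$ an $\widetilde{F}$-subharmonic function, and the boundary conditions become $\limsup_{\Omega\ni y\to x}w(y)<0$ for $x\in\partial\Omega\setminus\{x_0\}$ and $\lim_{\Omega\ni y\to x_0}w(y)=0$, which is exactly a lower $\widetilde{F}$-barrier at $x_0$. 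By the same construction, an $F$-lower barrier $w^-$ yields the $\widetilde{F}$-upper barrier $-w^-$ at $x_0$, and both correspondences are reversible.

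Finally, applying Theorem \ref{barchar} to both operators closes the argument: the existence of an $F$-upper barrier and an $F$-lower barrier at $x_0$ is equivalent to the existence of an $\widetilde{F}$-lower barrier and an $\widetilde{F}$-upper barrier at $x_0$, which in turn is equivalent to $\widetilde{F}$-regularity of $x_0$. Hence $F$-regularity and $\widetilde{F}$-regularity are equivalent. There is essentially no hard step here; the only point that deserves care is confirming that Theorem \ref{equiv} (the equivalence of $F$-supersolutions and $F$-superharmonic functions) is available for $\widetilde F$ as well, which follows directly from the fact that $\widetilde F$ inherits (F1)-(F2) from $F$.
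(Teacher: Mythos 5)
Your argument is correct and is essentially the paper's own proof: both rely on Theorem \ref{barchar} and the observation that negation turns an upper (resp.\ lower) $F$-barrier into a lower (resp.\ upper) $\widetilde{F}$-barrier, so that $w^{+}_{\widetilde F}:=-w^{-}_{F}$ and $w^{-}_{\widetilde F}:=-w^{+}_{F}$ do the job. The only difference is that you spell out the viscosity-level duality $u$ is an $F$-supersolution $\iff -u$ is an $\widetilde F$-subsolution, which the paper leaves implicit.
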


\begin{proof}
	Suppose that $x_0$ is $F$-regular. By Theorem \ref{barchar}, there exists an upper barrier $w^+_F$ and a lower barrier $w^-_F$. If we let $w^+_{\widetilde{F}}:=-w^-_F$ and $w^-_{\widetilde{F}}:=-w^+_F$, then $w^+_{\widetilde{F}}$ and $w^-_{\widetilde{F}}$ become an upper barrier and a lower barrier for $\widetilde{F}$, respectively. Therefore, again by Theorem \ref{barchar}, $x_0$ is $\widetilde{F}$-regular.
\end{proof}

Now we present one sufficient condition that guarantees a regular boundary point, namely the exterior cone condition. In Section 4, we suggest another sufficient condition, namely the Wiener criterion, which contains this exterior cone condition as a special case. 

\begin{theorem}[Exterior cone condition] \label{cone}
	Suppose that $\Omega$ satisfies an exterior cone condition at $x_0 \in \partial \Omega$. Then $x_0$ is a regular boundary point.
\end{theorem}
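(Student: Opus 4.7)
The plan is to combine the barrier characterization of regular points (Theorem \ref{barchar}) with the solvability result for domains with uniform exterior cone (Theorem \ref{soldp}), applied to an auxiliary cone-shaped domain. By Lemma \ref{local}, it suffices to produce upper and lower barriers with respect to $D := B_R(x_0) \cap \Omega$ for some small $R > 0$, rather than with respect to $\Omega$ itself.

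Let $C$ denote the exterior cone with vertex $x_0$, chosen small enough that $C \cap \overline{B_R}(x_0) \subset \Omega^c$. Consider the auxiliary domain
\begin{equation*}
\widetilde D := B_R(x_0) \setminus C,
\end{equation*}
which contains $D$ since $C \subset \Omega^c$. Geometrically, $\widetilde D$ satisfies a uniform exterior cone condition at every boundary point: at points of $\partial C \cap B_R$ the removed cone itself serves as an exterior cone, at points of $\partial B_R$ the complement of the ball does, and at the vertex $x_0$ the exterior cone is $C$ itself. I would then apply Theorem \ref{soldp} to $\widetilde D$ with the continuous boundary data $g^\pm(x) := \pm|x - x_0|$, obtaining $F$-solutions $u^\pm \in C(\overline{\widetilde D})$. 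Comparison with the constant $0$ (Theorem \ref{cpsupersol}) forces $u^+ \geq 0$ and $u^- \leq 0$ in $\widetilde D$, and since neither function is identically zero on $\partial \widetilde D$, a further comparison argument upgrades these to strict inequalities in $\widetilde D$.

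The claim is that $w^\pm := u^\pm|_D$ serve as upper and lower barriers at $x_0$ with respect to $D$. Conditions (i) and (iii) of the barrier definition are immediate from the $F$-harmonicity and continuity of $u^\pm$ on $\overline{\widetilde D}$, which gives $\lim_{D \ni y \to x_0} w^\pm(y) = u^\pm(x_0) = g^\pm(x_0) = 0$. For condition (ii), each $x \in \partial D \setminus \{x_0\}$ belongs either to $\partial \widetilde D$ (so $u^\pm(x) = \pm|x-x_0|$ has the correct strict sign) or to $\widetilde D$ itself (so $u^\pm(x) \ne 0$ by the strict inequality above), and in either case continuity of $u^\pm$ yields the required $\liminf$/$\limsup$ condition. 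Theorem \ref{barchar} then declares $x_0$ regular with respect to $D$, and Lemma \ref{local} promotes this to regularity with respect to $\Omega$.

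The only delicate points are (a) the verification that $\widetilde D$ satisfies the uniform exterior cone condition, in particular at the ``edge'' $\partial B_R \cap \partial C$ and at the vertex $x_0$, and (b) the strict sign $u^+ > 0$, $u^- < 0$ in the interior, which needs either a strong maximum principle for $F$-harmonic functions or a direct comparison with a small multiple of a positive $F$-solution. Both of these are standard, so the bulk of the proof is straightforward bookkeeping. I do not expect to need the homogeneous solution of Lemma \ref{fund} here, since the construction of cone-point barriers is already absorbed into Theorem \ref{soldp} via the references given there.
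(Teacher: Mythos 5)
Your argument is correct as a derivation within the paper's framework, but it takes a different formal route from the paper, whose entire proof of Theorem \ref{cone} is a citation to the explicit construction of a local barrier at a cone vertex (Miller-type homogeneous barriers $r^{\alpha}\phi(\theta)$ on the cone exterior, per \cite{CKLS99, Mil67, Mil71}). What you do instead is reduce everything to previously stated results: pass to the auxiliary domain $\widetilde D = B_R(x_0)\setminus C$, which upgrades the pointwise cone condition at $x_0$ to a \emph{uniform} exterior cone condition, invoke Theorem \ref{soldp} there with data $\pm|x-x_0|$, and feed the resulting solutions into Theorem \ref{barchar} via Lemma \ref{local}. The details check out: $F(0)=0$ follows from (F2), so the comparison principle gives $u^{+}\ge 0$; the strong minimum principle (available from the Harnack inequality the paper already uses) gives strict positivity in the connected set $\widetilde D$ since $u^{+}$ is not identically zero on $\partial\widetilde D$; and continuity of $u^{\pm}$ on $\overline{\widetilde D}\supset \partial D$ yields conditions (ii) and (iii) of the barrier definition. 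What your approach buys is a clean, self-contained bookkeeping argument relative to the paper's stated theorems, and the localization trick $\widetilde D$ is a genuinely useful observation.

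One caveat worth stating explicitly: the reduction is only as non-circular as Theorem \ref{soldp} is. The paper's proof of Theorem \ref{soldp} itself ``depends on the construction of global barriers achieving given boundary data'' on domains with a uniform exterior cone condition --- that is, it already contains the barrier construction at cone points that Theorem \ref{cone} is about. So you have not avoided the explicit cone barrier; you have relocated it inside the black box of Theorem \ref{soldp}. This is logically acceptable given how the paper axiomatizes Theorem \ref{soldp} with external references, but if the goal were a genuinely independent proof of regularity at cone points, one would still have to produce the Miller barrier at some stage.
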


\begin{proof}
	The proof relies on the construction of a local barrier at $x_0$. See \cite{CKLS99, Mil67, Mil71} for details.
\end{proof}

\begin{corollary} \label{exhaust}
	All polyhedra and all balls are regular. Furthermore, every open set can be exhausted by regular open sets. Here a bounded open set $\Omega$ is called a \textit{polyhedron} if $\partial \Omega=\partial \overline{\Omega}$ and if $\partial \Omega$ is contained in a finite union of $(n-1)$-hyperplanes.
	
\end{corollary}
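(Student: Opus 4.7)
The plan is to build the corollary directly on Theorem \ref{cone}: first verify that balls and polyhedra meet an exterior cone condition at every boundary point, and then polyhedrally exhaust an arbitrary open set.

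For a ball $B_R(y)$ and $x_0 \in \partial B_R(y)$, the exterior tangent ball $B_R(x_0 + R\nu)$ with $\nu = (x_0-y)/R$ satisfies $B_R(x_0+R\nu) \subset \overline{B_R(y)}^c$, and near $x_0$ it contains a solid cone of some fixed opening in the direction $\nu$. Hence the exterior cone condition holds uniformly at every boundary point, and Theorem \ref{cone} gives regularity. For a polyhedron $\Omega$ with $\partial \Omega \subset \bigcup_{i=1}^m H_i$, fix $x_0 \in \partial \Omega$ and (after translation) let $H_{i_1},\ldots,H_{i_\ell}$ be those hyperplanes passing through $x_0$. The complement $\mathbb{R}^n \setminus \bigcup_j H_{i_j}$ is a finite disjoint union of open polyhedral cones with vertex $x_0$. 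For sufficiently small $r>0$, the set $B_r(x_0)\cap \bigcup_i H_i$ coincides with $B_r(x_0)\cap \bigcup_j H_{i_j}$, so $B_r(x_0)\setminus \bigcup_j H_{i_j}$ is disjoint from $\partial \Omega$; since each such cone-slice is connected, it lies entirely in either $\Omega$ or $\mathbb{R}^n\setminus\overline{\Omega}$. The hypothesis $\partial\Omega=\partial\overline{\Omega}$ forces $x_0 \in \partial\overline{\Omega}$, so every neighborhood of $x_0$ meets $\mathbb{R}^n\setminus\overline{\Omega}$; therefore at least one of these cones lies locally in $\mathbb{R}^n \setminus \overline{\Omega}$, which supplies the required exterior cone. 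Theorem \ref{cone} again yields regularity.

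For the exhaustion assertion, for each $k \in \mathbb{N}$ consider the standard dyadic decomposition of $\mathbb{R}^n$ into closed cubes of side $2^{-k}$, and let
\begin{align*}
\Omega_k := \mathrm{int}\Bigl(\bigcup\{\overline{Q} : Q \text{ is a dyadic cube of side } 2^{-k},\ \overline{Q}\subset \Omega,\ \overline{Q}\subset B_k(0)\}\Bigr).
\end{align*}
Each $\Omega_k$ is bounded, its boundary lies in a finite union of $(n-1)$-hyperplanes (the faces of finitely many cubes), and the interior-of-closure construction guarantees $\partial\Omega_k=\partial\overline{\Omega_k}$, so $\Omega_k$ is a polyhedron in the sense of the statement and hence regular by the previous step. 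Dyadic nesting gives $\Omega_k \subset \Omega_{k+1}$, and any $x\in\Omega$ eventually satisfies $\mathrm{dist}(x,\partial\Omega)>\sqrt{n}\,2^{-k}$ and $|x|<k$, so $x\in\Omega_k$ for large $k$; thus $\bigcup_k \Omega_k=\Omega$. A mild shrink (replace $\Omega$ by $\{x\in\Omega:\mathrm{dist}(x,\partial\Omega)>2^{-k+1}\}$ when selecting cubes at level $k+1$) produces $\overline{\Omega_k}\subset \Omega_{k+1}$ if a compact exhaustion is desired.

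The only nontrivial step is the polyhedron case, where one must carefully combine the finite stratification of $\partial \Omega$ by hyperplanes with the topological hypothesis $\partial\Omega=\partial\overline{\Omega}$ to produce a genuine exterior cone rather than merely an exterior half-space direction; the balls case is immediate, and the exhaustion is routine once polyhedra are known to be regular.
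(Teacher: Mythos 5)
Your proposal is correct and follows the same route as the paper: both reduce the first assertion to the exterior cone condition and Theorem \ref{cone}, and both build the exhaustion from finite unions of cubes forming polyhedra. You simply supply details the paper leaves implicit (the hyperplane stratification argument for polyhedra and the regularity of the dyadic-cube construction), and these details check out.
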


\begin{proof}
	Since polyhedra and balls satisfy the uniform exterior cone condition, the first assertion follows from \Cref{cone}. For the second assertion, exhaust $\Omega$ by domains $D_1 \subset\subset D_2 \subset\subset \cdots \subset\subset \Omega$. Then, since $\overline{D_j}$ is compact, there exists a finite union of open cubes $Q_{j_i} (\subset D_{j+1})$ that covers $\overline{D_j}$. Letting $P_j:=\bigcup_i \mathrm{int}\, \overline{Q_{j_i}}$ which is a polyhedron by the construction, we obtain the desired exhaustion.
\end{proof}

\section{Balayage and capacity}
\subsection{Balayage and capacity potential}
We define the \textit{lower semi-continuous regularization} $\hat{u}$ of any function $u : E \to [-\infty, \infty]$ by 
\begin{align*}
\hat{u}(x):=\lim_{r \to 0}\inf_{E \cap B_r(x)}u.
\end{align*}

\begin{lemma}\label{balayagesuper}
	Suppose that $\mathcal{F}$ is a family of $F$-superharmonic functions in $\Omega$, locally uniformly bounded below. Then the lower semi-continuous regularization $s$ of $\inf \mathcal{F}$, 
	\begin{align*}
	s(x)=\lim_{r \to 0} \inf_{B_r(x)} (\inf \mathcal{F}),
	\end{align*}
	is $F$-superharmonic in $\Omega$.
\end{lemma}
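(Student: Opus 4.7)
The plan is to verify, in order, the three defining conditions of an $F$-superharmonic function for the regularization $s$. Lower semi-continuity of $s$ is immediate from its construction as the lower semi-continuous envelope, and $s > -\infty$ everywhere because $\mathcal{F}$ is assumed locally uniformly bounded below. Moreover, fixing any $u \in \mathcal{F}$, one has the pointwise bound $s \leq u$ on $\Omega$: indeed,
\begin{align*}
s(x) \;=\; \lim_{r\to 0}\inf_{B_r(x)}(\inf\mathcal{F}) \;\leq\; \lim_{r\to 0}\inf_{B_r(x)} u \;=\; \liminf_{y\to x} u(y) \;=\; u(x),
\end{align*}
where the last equality uses the lower semi-continuity of $u$ (Lemma \ref{lsc}). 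In particular $s \not\equiv +\infty$ on any component of $\Omega$, since $u \not\equiv +\infty$ there.

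The main step is the comparison property (iii) of the definition. Let $D \subset\subset \Omega$ be open and let $h \in C(\overline{D})$ be an $F$-solution in $D$ with $h \leq s$ on $\partial D$. For every $u \in \mathcal{F}$ we then have $h \leq s \leq u$ on $\partial D$, so the $F$-superharmonicity of $u$ already yields $h \leq u$ in $D$. Taking the pointwise infimum over $u \in \mathcal{F}$ gives $h \leq \inf \mathcal{F}$ in $D$. To upgrade this to $h \leq s$ I exploit the continuity of $h$:
\begin{align*}
h(x) \;=\; \lim_{r \to 0} \inf_{B_r(x)} h \;\leq\; \lim_{r \to 0} \inf_{B_r(x)} (\inf \mathcal{F}) \;=\; s(x)
\end{align*}
for every $x \in D$, which is precisely the comparison inequality required.

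The argument is essentially bookkeeping once the pointwise observation $s \leq u$ is in hand, so I do not anticipate a genuine obstacle. The only mildly delicate point is the final passage from $h \leq \inf \mathcal{F}$ to $h \leq s$, where the continuity of $h$ interacts with the definition of the lower semi-continuous regularization; this is exactly why one regularizes $\inf \mathcal{F}$ in the first place, since without the regularization the resulting function need not satisfy the comparison principle against continuous sub/superharmonic test functions.
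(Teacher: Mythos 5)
Your proof is correct and follows essentially the same route as the paper's: establish $s \leq u$ pointwise for each $u \in \mathcal{F}$, use the $F$-superharmonicity of each $u$ to get $h \leq u$ in $D$ for any admissible comparison function $h$, and then pass from $h \leq \inf\mathcal{F}$ back to $h \leq s$ via the continuity of $h$. The extra bookkeeping you include (the verification of conditions (i) and (ii) of the definition) is left implicit in the paper but is accurate.
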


\begin{proof}
	Since $\mathcal{F}$ is locally uniformly bounded below, $s$ is lower semi-continuous. Fix an open $D \subset\subset \Omega$ and let $h \in C(\overline{D})$ be an $F$-harmonic function satisfying $h \leq s$ on $\partial D$. Then $h \leq u$ in $D$ whenever $u \in \mathcal{F}$. It follows from the continuity of $h$ that $h \leq s$ in $D$.
\end{proof}

\begin{definition}[Balayage and capacity potential]
	\hspace{0.1mm}
	\begin{enumerate}[(i)]
		\item 	For $\psi: \Omega \to (-\infty, \infty]$ which is locally bounded below, let \begin{align*}
		\Phi^{\psi}=\Phi^{\psi}(\Omega):=\{u : \text{$u$ is $F$-superharmonic in $\Omega$ and $u \geq \psi$ in $\Omega$}\}.
		\end{align*}
		Then the function 
		\begin{align*}
		R^{\psi}=R^{\psi}(\Omega):=\inf \Phi^{\psi}
		\end{align*}
		is called the \textit{reduced function} and its lower semi-continuous regularization
		\begin{align*}
		\hat{R}^{\psi}=\hat{R}^{\psi}(\Omega)
		\end{align*}
		is called the \textit{balayage} of $\psi$ in $\Omega$. By Lemma \ref{balayagesuper}, $\hat{R}^{\psi}$ is $F$-superharmonic in $\Omega$.
		
		\item If $u$ is a non-negative function on a set $E \subset \Omega$, we write 
		\begin{align*}
		\Phi_E^{u}=\Phi^{\psi}, \quad R_E^{u}=R^{\psi}, \quad \hat{R}_E^{u}=\hat{R}^{\psi},
		\end{align*}
		where 
		\begin{align*}
		\psi=	
		\left\{ \begin{array}{ll} 
		u & \textrm{in $E$},\\
		0 & \textrm{in $\Omega \setminus E$}.
		\end{array} \right.
		\end{align*}
		The function $\hat{R}_E^{u}$ is called the \textit{balayage of $u$ relative to $E$}. 
		
		\item In particular, we call the function $\hat{R}_E^1$ the  ($F$-)\textit{capacity potential} of $E$ in $\Omega$.
	\end{enumerate}
\end{definition}

\begin{remark}
	For an operator in divergence form, there exists an alternative method to define the capacity potential. For simplicity, suppose that the operator is given by the $p$-Laplacian. Let $\Omega$ be bounded and $K \subset \Omega$ be a compact set. For $\psi \in C_0^{\infty}(\Omega)$ with $\psi \equiv 1$ on $K$, the $p$-harmonic function $u$ in $\Omega \setminus K$ with $u-\psi \in W_0^{1, p}(\Omega \setminus K)$ is called the \textit{capacity potential} of $K$ in $\Omega$ and denoted by $\mathcal{R}(K, \Omega)$. Here note that $\mathcal{R}(K, \Omega)$ is independent of the particular choice of $\psi$ and the existence of the capacity potential is guaranteed by the variational method. Indeed, both definitions of capacity potentials coincide; see \cite[Chapter 9]{HKM93} for details.
\end{remark}

\begin{lemma} \label{balayage}
	The balayage $\hat{R}_E^u$ is $F$-harmonic in $\Omega \setminus \overline{E}$ and coincides with $R_E^u$ there. If, in addition, $u$ is $F$-superharmonic in $\Omega$, then $\hat{R}_E^u=u$ in the interior of $E$.
\end{lemma}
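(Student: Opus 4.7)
The plan is to split the argument along the two assertions. For the first part, I fix an open ball $B \subset\subset \Omega \setminus \overline{E}$ and run a Perron-type construction analogous to the proof that $\overline{H}_f$ is $F$-harmonic. Pick a countable dense subset $\{x_j\} \subset B$ and $v_{i,j} \in \Phi_E^u$ with $v_{i,j}(x_j) \to R_E^u(x_j)$; by replacing $v_{i,j+1}$ with $\min(v_{i,j}, v_{i,j+1})$, which stays in $\Phi_E^u$ by Lemma \ref{easy}(ii), arrange $v_{i,j}(x_k) \to R_E^u(x_k)$ for all $k \leq j$. Let $V_{i,j} := P(v_{i,j}, B)$. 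Since $\psi \equiv 0$ on $B$ and $v_{i,j} \geq \psi \geq 0$ in $\Omega$, the minimum principle applied to the $F$-harmonic function $V_{i,j}$ in $B$ (with non-negative boundary values inherited from $v_{i,j}$) forces $V_{i,j} \geq 0 = \psi|_B$, so $V_{i,j} \in \Phi_E^u$ and $R_E^u \leq V_{i,j} \leq v_{i,j}$. By compactness (Theorem \ref{stability}(ii)) and a diagonal extraction, one obtains an $F$-harmonic limit $h$ on $B$ satisfying $R_E^u \leq h$ in $B$ and $R_E^u = h$ on $\{x_j\}$.

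For the reverse inequality, any $v \in \Phi_E^u$ has Poisson modification $V := P(v, B) \in \Phi_E^u$ by the same reasoning, and $V$ is continuous in $B$. Continuity of $h$ together with density of $\{x_j\}$ forces $V \geq h$ throughout $B$, so $v \geq V \geq h$ in $B$; the infimum over $v$ gives $R_E^u \geq h$ in $B$. Hence $R_E^u = h$ is $F$-harmonic, and in particular continuous, on $B$. Since $B$ is arbitrary, $R_E^u$ is $F$-harmonic throughout $\Omega \setminus \overline{E}$, and continuity immediately yields $\hat{R}_E^u = R_E^u$ there.

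For the second assertion, assume additionally that $u$ is $F$-superharmonic and non-negative on $\Omega$. Then $u = \psi$ on $E$ and $u \geq 0 = \psi$ on $\Omega \setminus E$, so $u \in \Phi_E^u$ and $R_E^u \leq u$. Conversely, every $v \in \Phi_E^u$ satisfies $v \geq \psi = u$ on $E$, so $R_E^u \geq u$ on $E$; in particular $R_E^u = u$ on $\mathrm{int}(E)$. For $x \in \mathrm{int}(E)$, choose $r_0 > 0$ with $B_{r_0}(x) \subset \mathrm{int}(E)$, so that $\inf_{B_r(x)} R_E^u = \inf_{B_r(x)} u$ for every $r \leq r_0$. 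Letting $r \to 0$ and using the lower semi-continuity of $u$ together with Lemma \ref{lsc} yields $\hat{R}_E^u(x) = u(x)$.

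The main obstacle is verifying that Poisson modifications stay in the admissible class $\Phi_E^u$: this is where the non-negativity of $\psi$ on $\Omega \setminus E$ and the minimum principle for $F$-harmonic functions in $B$ are essential. Once this stability of $\Phi_E^u$ under $P(\cdot, B)$ is in place, the Perron-type machinery from Section 2 transfers essentially verbatim, and the second assertion reduces to a routine manipulation of the definitions of $\Phi_E^u$ and $\hat{R}_E^u$ on the interior of $E$.
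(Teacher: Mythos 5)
Your proof is correct, but it takes a different route from the paper's. The paper first invokes Choquet's topological lemma on the downward directed family $\Phi_E^u$ to produce a single decreasing sequence $v_i \in \Phi_E^u$ whose limit $v$ satisfies $\hat{v}=\hat{R}_E^u$ everywhere; it then Poisson-modifies this one sequence on a ball $B\subset\subset\Omega\setminus\overline{E}$ and applies the Harnack convergence theorem to the monotone sequence $s_i=P(v_i,B)$ to get an $F$-harmonic limit squeezed between $R_E^u$ and $v$. You instead rerun the countable-dense-subset and diagonal-compactness argument used to show that the Perron solutions are $F$-harmonic, which avoids Choquet's lemma entirely and stays within the Section 2 machinery; a bonus of your version is that it identifies $R_E^u$ itself (not merely its regularization) with the continuous $F$-harmonic limit $h$ on $B$, so the identity $\hat{R}_E^u=R_E^u$ on $\Omega\setminus\overline{E}$ falls out immediately, whereas the paper extracts it from the chain $R_E^u\le s\le v$ and $\hat{s}=s$. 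Both arguments hinge on the same key observation, which you correctly isolate: the Poisson modification of an element of $\Phi_E^u$ on a ball disjoint from $\overline{E}$ remains in $\Phi_E^u$ because $\psi\equiv 0$ there and the comparison principle keeps the modification non-negative. Two minor points to tighten: the compactness step needs the $V_{i,j}$ to be locally uniformly bounded in $B$, which follows from non-negativity, finiteness of $R_E^u(x_1)$, and the Harnack inequality (the paper elides the same point in the Perron lemma); and your explicit non-negativity hypothesis on $u$ over all of $\Omega$ in the second assertion is exactly the implicit assumption the paper makes when it asserts $u\in\Phi_E^u$, while your use of Lemma \ref{lsc} to pass from $R_E^u=u$ on $\mathrm{int}(E)$ to $\hat{R}_E^u=u$ there is a welcome elaboration of the paper's ``rather immediate.''
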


\begin{proof}
	Observe first that if $v_1$ and $v_2$ are in $\Phi_E^u$, then so is $\min\{v_1, v_2\}$. Hence, the family $\Phi_E^u$ is downward directed and we may invoke Choquet's topological lemma (see Lemma 8.3. in \cite{HKM93}): there is a decreasing sequence of functions $v_i \in \Phi_E^u$ with the limit $v$ such that
	\begin{align*}
	\hat{v}(x)=\hat{R}_E^u(x)
	\end{align*}
	for all $x \in \Omega$. 
	
	Next, we choose a ball $B \subset \subset \Omega \setminus \overline{E}$ and consider a Poisson modification $s_i=P(v_i, B)$. Then it follows that $s_i \in \Phi_E^u$ and $s_{i+1} \leq s_i \leq v_i$. Thus, we have
	\begin{align*}
	R_E^u \leq s:=\lim_{i \to \infty}s_i \leq v,
	\end{align*}
	which implies that $\hat{R}_E^u=\hat{v}=\hat{s}$. Moreover, since $s$ is $F$-harmonic in $B$ (Harnack convergence theorem, Theorem \ref{harnackct}), we know that $\hat{s}=s$. Therefore, we conclude that the balayage $\hat{R}_E^u$ is $F$-harmonic in $\Omega \setminus \overline{E}$. The second assertion of the lemma is rather immediate since $u \in \Phi_E^u$ if $u$ is $F$-superharmonic in $\Omega$.
\end{proof}

\begin{lemma}\label{brelot1}
	Let $K$ be a compact subset of $\Omega$ and consider $R_K^1=R_K^1(\Omega)$ and $\hat{R}_K^1=\hat{R}_K^1(\Omega)$.
	\begin{enumerate}[(i)]
		\item $0 \leq \hat{R}_K^1 \leq {R}_K^1 \leq 1$ in $\Omega$.
		\item $R_K^1=1$ in $K$.
		\item $R_K^1=\hat{R}_K^1$ in $(\partial K)^c$.
		\item $\hat{R}_K^1$ is $F$-superharmonic in $\Omega$ and $F$-harmonic in $\Omega \setminus K$.
	\end{enumerate}
\end{lemma}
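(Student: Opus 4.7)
The plan is to deduce each of the four assertions directly from the two preceding lemmas (Lemma \ref{balayagesuper} and Lemma \ref{balayage}), together with the single preliminary observation that $F(0)=0$. This identity follows from (F2), since $F(0)=F(t\cdot 0)=tF(0)$ for every $t>0$; it makes every constant function an $F$-solution, hence in particular both $F$-superharmonic and $F$-subharmonic. With this in hand, the admissibility of the constants $0$ and $1$ as competitors in the relevant infima becomes routine.

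For (i), I observe that the constant $1$ lies in $\Phi_K^1$, because it is $F$-superharmonic and dominates the defining data $\psi:=\chi_K$; this gives $R_K^1\leq 1$. Conversely, every $u\in\Phi_K^1$ satisfies $u\geq\psi\geq 0$, so $R_K^1\geq 0$. The bound $\hat{R}_K^1\leq R_K^1$ is built into the definition of the lower semi-continuous regularization, since $\inf_{B_r(x)}R_K^1\leq R_K^1(x)$, and non-negativity passes through that same operation. For (ii), at any $x\in K$ every $u\in\Phi_K^1$ satisfies $u(x)\geq\psi(x)=1$, forcing $R_K^1\geq 1$ on $K$; combined with (i) this gives $R_K^1=1$ on $K$.

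For (iv), Lemma \ref{balayagesuper} applies verbatim to the family $\Phi_K^1$, which is uniformly bounded below by $0$, yielding that $\hat{R}_K^1$ is $F$-superharmonic in $\Omega$; the first half of Lemma \ref{balayage} then gives $F$-harmonicity on $\Omega\setminus\overline{K}$, which equals $\Omega\setminus K$ since $K$ is compact, hence closed. Finally (iii) splits across the decomposition $(\partial K)^c\cap\Omega=K^{\circ}\cup(\Omega\setminus\overline{K})$: on the exterior piece Lemma \ref{balayage} already asserts $\hat{R}_K^1=R_K^1$, while on the interior $K^{\circ}$ the concluding sentence of Lemma \ref{balayage} applied to the $F$-superharmonic constant $u\equiv 1$ gives $\hat{R}_K^1=1$, matching $R_K^1=1$ from (ii).

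I do not anticipate a substantive obstacle here, as the entire lemma is a compendium of properties packaged together for later use. The only subtlety worth flagging is the admissibility of the constant $1$ as an $F$-superharmonic competitor — both as an upper envelope in (i) and as the superharmonic function invoked inside $K^{\circ}$ in (iii) — which is precisely what the preliminary observation $F(0)=0$ secures; without (F2) this step would require separate justification.
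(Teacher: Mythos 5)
Your proof is correct and follows essentially the same route as the paper's (which simply cites the definition of $R_K^1$, the admissibility of the constant $1$, and Lemma \ref{balayage} together with part (ii)); your write-up merely supplies the details the paper leaves implicit, including the useful observation that (F2) forces $F(0)=0$ so that constants are admissible competitors.
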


\begin{proof}
	\begin{enumerate}[]
		\item (i) It immediately follows from the definition of $R_K^1$ and the comparison principle.
		
		\item (ii) Since $1 \in \Phi^{\psi}(\Omega)$, we have $R_K^1 \leq 1$ in $\Omega$. On the other hand, for any $u \in \Phi^{\psi}(\Omega)$, we have $u \geq \psi=1$ in $K$ and so $R_K^1 \geq 1$ in $K$.
		
		\item (iii), (iv) It immediately follows from Lemma \ref{balayage} and part (ii).
		
	\end{enumerate}
\end{proof}

The following theorem shows that the capacity potential can be understood as the upper Perron solution:
\begin{theorem} \label{cappoten}
	Suppose that $K$ is a compact subset of a bounded, open set $\Omega$ and that $u=\hat{R}_K^1(\Omega)$ is the capacity potential of $K$ in $\Omega$. Moreover, let $f$ be a function such that
	\begin{align*}
	f=	
	\left\{ \begin{array}{ll} 
	1 & \text{on $\partial K$},\\
	0 & \text{in $\partial \Omega$}.
	\end{array} \right.
	\end{align*}
	Then 
	\begin{align*}
	\hat{R}_{K}^{1}(\Omega)=\overline{H}_f(\Omega \setminus K)
	\end{align*}
	in $\Omega \setminus K$.
\end{theorem}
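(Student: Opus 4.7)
The plan is to prove the identity by establishing the two inequalities $\hat{R}_K^1(\Omega) \leq \overline{H}_f(\Omega \setminus K)$ and $\overline{H}_f(\Omega \setminus K) \leq \hat{R}_K^1(\Omega)$ on $\Omega \setminus K$, by comparing the defining families $\Phi_K^1(\Omega)$ and $\mathcal{U}_f(\Omega \setminus K)$ using the pasting lemma (Lemma \ref{pasting}) and the identification $\hat{R}_K^1 = R_K^1$ on $(\partial K)^c$ from Lemma \ref{brelot1}(iii).

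For $\hat{R}_K^1 \leq \overline{H}_f$, fix $v \in \mathcal{U}_f(\Omega \setminus K)$. The comparison principle (Theorem \ref{cpsuperhar}) applied to $v$ and the zero function yields $v \geq 0$ in $\Omega \setminus K$, since $f \geq 0$ on $\partial(\Omega \setminus K)$. Define
\begin{align*}
s :=
\left\{ \begin{array}{ll}
\min\{v, 1\} & \textrm{in $\Omega \setminus K$}, \\
1 & \textrm{in $K$},
\end{array} \right.
\end{align*}
and observe that $s$ is lower semi-continuous on $\Omega$: the only delicate case is at $x_0 \in \partial K$, where $s(x_0) = 1$ and the Perron boundary condition gives $\liminf_{(\Omega \setminus K) \ni y \to x_0} v(y) \geq f(x_0) = 1$, hence $\liminf_{y \to x_0} s(y) \geq 1$. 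The pasting lemma, applied with the constant $F$-superharmonic function $1$ on $\Omega$ and $\min\{v,1\}$ on $\Omega \setminus K$, then implies that $s$ is $F$-superharmonic in $\Omega$. Since $s \geq \psi$ (where $\psi = 1$ on $K$ and $0$ elsewhere), we get $s \in \Phi_K^1(\Omega)$; hence $R_K^1 \leq s \leq v$ in $\Omega \setminus K$. Infimizing over $v$ and invoking $\hat{R}_K^1 = R_K^1$ on $\Omega \setminus K \subset (\partial K)^c$ completes this direction.

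For the reverse inequality, fix $u \in \Phi_K^1(\Omega)$ and check that its restriction to $\Omega \setminus K$ belongs to $\mathcal{U}_f(\Omega \setminus K)$. It is $F$-superharmonic there and bounded below by $0$ since $u \geq \psi \geq 0$; at $x_0 \in \partial \Omega$ the condition $\liminf u \geq 0 = f(x_0)$ is automatic, while at $x_0 \in \partial K \subset K$ the lower semi-continuity of $u$ combined with $u(x_0) \geq \psi(x_0) = 1$ gives
\begin{align*}
\liminf_{(\Omega \setminus K) \ni y \to x_0} u(y) \geq \liminf_{\Omega \ni y \to x_0} u(y) \geq u(x_0) \geq 1 = f(x_0).
\end{align*}
Therefore $\overline{H}_f \leq u$ in $\Omega \setminus K$, and infimizing over $u \in \Phi_K^1(\Omega)$ yields $\overline{H}_f \leq R_K^1 = \hat{R}_K^1$ in $\Omega \setminus K$.

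The principal obstacle, as I see it, is the lower semi-continuity of the pasted function $s$ along $\partial K$: this is precisely the step where the Perron upper-class boundary requirement on $v$ is used essentially, and it relies on $K$ being closed so that $\partial K \subset K$ and the pasted value is $1$. Everything else amounts to clean bookkeeping between the two minimization families, the transition being seamless because $\Omega \setminus K \subset (\partial K)^c$, so Lemma \ref{brelot1}(iii) equates $R_K^1$ with $\hat{R}_K^1$ there without any regularity hypothesis on $K$ or $\Omega$.
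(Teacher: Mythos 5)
Your proposal is correct and follows essentially the same route as the paper: one direction by showing that every $v\in\Phi_K^1(\Omega)$ restricts to a member of $\mathcal{U}_f(\Omega\setminus K)$ via lower semi-continuity, and the other by truncating $v\in\mathcal{U}_f$ at $1$ and pasting with the constant $1$ on $K$ to land in $\Phi_K^1(\Omega)$. Your explicit verification of the lower semi-continuity of the pasted function along $\partial K$ is a detail the paper leaves implicit, but it is the same argument.
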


\begin{proof}
	Lemma \ref{balayage} shows that $\hat{R}_K^1=R_K^1$ in $\Omega \setminus K$. Then recall that
	\begin{align*}
	R_K^1(\Omega)=\inf\Phi_K^1=\inf\{v: \text{$v$ is $F$-superharmonic in $\Omega$ and $v \geq \psi$ in $\Omega$}  \} ,
	\end{align*} 
	where 
	\begin{align*}
	\psi=
	\left\{ \begin{array}{ll} 
	1 & \textrm{in $K$},\\
	0 & \textrm{in $\Omega \setminus K$},
	\end{array} \right.
	\end{align*}
	and
	\begin{align*}
	&\overline{H}_f(\Omega \setminus K)=\inf \mathcal{U}_f\\
	&=\inf\{v: \text{$v$ is $F$-superharmonic in $\Omega \setminus K$, $\liminf_{\Omega \setminus K \ni y \to x} v(y) \geq f(x)$ for each $x \in \partial (\Omega \setminus K)$}  \},
	\end{align*} 
	where 
	\begin{align*}
	f=	
	\left\{ \begin{array}{ll} 
	1 & \textrm{on $\partial K$},\\
	0 & \textrm{in $\partial \Omega$}.
	\end{array} \right.
	\end{align*}
	\begin{enumerate}[(i)]
		\item Suppose that $v \in \Phi_K^1$. Since $v \geq 0$ in $\Omega$, we have $\liminf_{\Omega \setminus K \ni y \to x}v(y) \geq 0=f(x)$ for $x \in \partial \Omega$. Moreover, since $v$ is lower semi-continuous, we have
		\begin{align*}
		\liminf_{\Omega \setminus K \ni y \to x}v(y) \geq \liminf_{\Omega \ni y \to x}v(y) \geq v(x) \geq 1=f(x),
		\end{align*}
		for $x \in \partial K$. Therefore, we conclude $v \in \mathcal{U}_f$, which implies that $\overline{H}_f(\Omega \setminus K) \leq R_K^1(\Omega)$ in $\Omega \setminus K$.
		
		\item Suppose that $v \in \mathcal{U}_f$. We consider $\overline{v}:=\min\{1, v\} \in \mathcal{U}_f$ so that $0 \leq \overline{v} \leq 1$ in $\Omega \setminus K$. Then, since $u \equiv 1$ is $F$-superharmonic in $\Omega$, the function
		\begin{align*}
	s=
	\left\{ \begin{array}{ll} 
	\min\{1,\overline{v}\}=\overline{v} & \text{in $\Omega \setminus K$},\\
	1 & \text{in $K$}
	\end{array} \right.
	\end{align*}
	is $F$-superharmonic in $\Omega$ by pasting lemma, \Cref{pasting}. Obviously, $s \in \Phi_K^1$ and so $R_K^1(\Omega) \leq \overline{v} \leq v$ in $\Omega \setminus K$.
	\end{enumerate}
	
\end{proof}

\subsection{Capacity}
In general, for an operator in divergence form, we consider a \textit{variational capacity}, which comes from minimizing the energy among admissible functions. On the other hand, for an operator in non-divergence form, we cannot consider the corresponding energy, and so we require an alternative approach to attain a proper notion of capacity. Our definition of a capacity is in the same context with Bauman \cite{Bau85} (for linear operators in non-divergence form) and Labutin \cite{Lab02b} (for the Pucci extremal operators). 

\begin{definition}[Non-variational capacity]
	For a ball $B=B_{2r}(x_0)$, we fix a ball $B'=B_{7/5r}(x_0) \subset B$ and a point $y_0=x_0+\frac{3}{2}r e_1$. Then we define a \textit{capacity} for fully nonlinear operator $F$ by
	\begin{align*}
	\mathrm{cap} (K,B)=\mathrm{cap}_F(K,B):=\inf\{u(y_0) : \text{$u$ is $F$-superharmonic in $B$, $u \geq 0$ in $B$, and $u \geq 1$ in $K$}\}
	\end{align*}
	whenever $K$ is a compact subset of $B'$.
\end{definition}

Comparing the definitions of capacity and capacity potential, we immediately notice that
\begin{align*}
\mathrm{cap} (K,B)=\hat{R}_K^1(B)(y_0).
\end{align*} 
Moreover, appealing to Theorem \ref{cappoten}, we further have
\begin{align*}
\mathrm{cap} (K,B)=\overline{H}_f(B \setminus K)(y_0),
\end{align*}
where the boundary data $f$ on $\partial (B \setminus K)$ is given by
\begin{align*}
f=	
\left\{ \begin{array}{ll} 
1 & \text{on $\partial K$},\\
0 & \text{in $\partial B$}.
\end{array} \right.
\end{align*}
Finally, considering Harnack inequality for $\hat{R}_K^1(B)$ on the sphere $\partial B_{3r/2}(x_0)$, we notice that capacities defined for different choices of $y_0 \in \partial B_{3r/2}(x_0)$ are comparable.

\begin{lemma}[Properties of capacity]\label{brelot2}
	Fix a ball $B=B_{2r}(x_0)$. Then the set function $K \mapsto \mathrm{cap} (K,B)$, where $K$ is a compact subset of $B'=B_{7/5r}(x_0)$, enjoys the following properties:
	\begin{enumerate}[(i)]
		\item $0 \leq \mathrm{cap} (K,B) \leq 1$.
		\item If $K_1 \subset K_2 \subset B'$, then
		\begin{align*}
		\mathrm{cap} (K_1 ,B) \leq \mathrm{cap} (K_2, B).
		\end{align*}
		\item If a monotone sequence of compact sets $\{K_j\}_{j=1}^{\infty}$ satisfies $B' \supset K_1 \supset K_2 \supset \cdots$, then
		\begin{align*}
		\mathrm{cap} (K, B)=\lim_{j \to \infty}\mathrm{cap} (K_j, B), \quad \text{for $K:=\bigcap_{j=1}^{\infty}K_j$.}
		\end{align*}
		\item (Subadditivity) We further suppose that $F$ is convex. If $K_1$ and $K_2$ are compact subsets of $B'$, then
		\begin{align*}
		\mathrm{cap}(K_1 \cup K_2, B)  \leq \mathrm{cap} (K_1, B)+\mathrm{cap} (K_2,B).
		\end{align*}
	\end{enumerate}
\end{lemma}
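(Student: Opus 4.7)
Parts (i) and (ii) are essentially tautologies from the definition. For (i), the constant function $u\equiv 1$ is $F$-superharmonic, non-negative, and $\ge 1$ on $K$, so $\mathrm{cap}(K,B)\le 1$; and non-negativity of every admissible $u$ at $y_0$ gives $\mathrm{cap}(K,B)\ge 0$. For (ii), any function admissible for $K_2$ is automatically admissible for $K_1\subset K_2$, so the admissible class only grows as the set shrinks and the infimum can only decrease.

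For (iii), the sequence $\{\mathrm{cap}(K_j,B)\}$ is decreasing by (ii) and bounded below by $\mathrm{cap}(K,B)$, again by (ii). For the opposite inequality I would fix $\varepsilon>0$, pick $u$ admissible for $K$ with $u(y_0)\le\mathrm{cap}(K,B)+\varepsilon$, and exploit lower semi-continuity: the set $U_\varepsilon:=\{u>1-\varepsilon\}$ is open and contains $K$, and a routine compactness argument (using that $K_1$ is compact, $\bigcap_j K_j=K$, and $K_1\setminus U_\varepsilon$ is closed) forces $K_j\subset U_\varepsilon$ for all $j$ sufficiently large. The scaled function $(1-\varepsilon)^{-1}u$ is then admissible for such $K_j$---positive scaling preserves $F$-superharmonicity thanks to (F2)---and evaluating at $y_0$ gives
\[
\mathrm{cap}(K_j,B)\le \frac{u(y_0)}{1-\varepsilon}\le \frac{\mathrm{cap}(K,B)+\varepsilon}{1-\varepsilon}.
\]
Sending $j\to\infty$ and then $\varepsilon\to 0$ closes the argument.

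For (iv), the extra hypothesis that $F$ is convex together with (F2) yields subadditivity on $\mathcal{S}^n$: convexity gives $F((M+N)/2)\le\tfrac12(F(M)+F(N))$, and (F2) upgrades this to $F(M+N)\le F(M)+F(N)$. The key step is to transfer this to viscosity supersolutions, i.e.\ to show that whenever $u_1,u_2$ are $F$-superharmonic in $B$, so is $u_1+u_2$. I would obtain this by inf-convolving each $u_i$ to produce semi-concave $F$-supersolutions on which Alexandrov's theorem grants twice differentiability almost everywhere, deducing the supersolution inequality pointwise a.e.\ via the matrix subadditivity of $F$, and then passing to the uniform limit using the stability result of Theorem \ref{stability}(i). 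Granting this, if $u_i$ is admissible for $K_i$ with $u_i(y_0)\le\mathrm{cap}(K_i,B)+\varepsilon$, then $u_1+u_2$ is $F$-superharmonic, non-negative, and $\ge 1$ on $K_1\cup K_2$ (on $K_1$, $u_1\ge 1$ and $u_2\ge 0$, and symmetrically on $K_2$), so
\[
\mathrm{cap}(K_1\cup K_2,B)\le u_1(y_0)+u_2(y_0)\le\mathrm{cap}(K_1,B)+\mathrm{cap}(K_2,B)+2\varepsilon,
\]
and $\varepsilon\to 0$ concludes. The main obstacle I anticipate is precisely this viscosity-level closure of $F$-superharmonicity under addition when $F$ is convex; (i) and (ii) are one-liners, and (iii) is a clean outer-regularity argument from lower semi-continuity and (F2).
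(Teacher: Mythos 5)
Your proof is correct and follows essentially the same route as the paper's: (i)--(ii) directly from the definition of the admissible class, (iii) via the open set $\{u>1-\varepsilon\}$, compactness of the $K_j$, and rescaling by $(1-\varepsilon)^{-1}$ using (F2), and (iv) via the closure of $F$-superharmonic functions under addition when $F$ is convex and positively homogeneous. The only difference is that the paper simply cites \cite{CC95} (Theorem 5.8) for that last closure property, whereas you sketch its standard inf-convolution/Alexandrov proof.
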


\begin{proof}
	\begin{enumerate}[(i)]
		\item Recalling Lemma \ref{brelot1}, we have $0 \leq \mathrm{cap} (K,B) \leq 1$. 
		
		\item If $K_1 \subset K_2$, then $\Phi_{K_2}^1 \subset \Phi_{K_1}^1$ and so $\mathrm{cap} (K_1,B) \leq \mathrm{cap} (K_2,B)$.
		
		\item Since $\mathrm{cap}(K_j, B) \geq \mathrm{cap}(K, B)$ by (ii), it is immediate that 
		\begin{align*}
		\mathrm{cap}(K, B) \leq \lim_{j \to \infty} \mathrm{cap}(K_j, B).
		\end{align*}
		 For the reversed inequality, fix small $\varepsilon>0$ and $u \in \Phi_K^1(B)$. If $j$ is large enough, then $K_j \subset \{u \geq 1-\varepsilon\}$ and so 
		\begin{align*}
		\lim_{j \to \infty}\mathrm{cap}(K_j, B) \leq \mathrm{cap}(\{u \geq 1-\varepsilon\}, B) \leq \frac{1}{1-\varepsilon}u(y_0).
		\end{align*}
		Letting $\varepsilon \to 0^+$ and taking infimum for $u \in \Phi_K^1(B)$, we conclude that 
		\begin{align*}
		\lim_{j \to \infty} \mathrm{cap}(K_j, B) \leq \mathrm{cap}(K, B).
		\end{align*}
		
		\item Let $v_1 \in \Phi_{K_1}^1(B)$ and $v_2 \in \Phi_{K_2}^1(B)$. Since $F$ is convex, we can apply \cite[Theorem 5.8]{CC95} to obtain $\frac{1}{2}(v_1+v_2)$ is $F$-superharmonic in $B$. Moreover, it follows from the assumption (F2) that $v_1+v_2 \in \Phi_{K_1 \cup K_2}^1(B)$ and so $R_{K_1 \cup K_2}^1(B) \leq v_1+v_2$. Putting the infimum on this inequality and evaluating at $y_0$, we conclude that 
		\begin{align*}
		\mathrm{cap}(K_1 \cup K_2, B)  \leq \mathrm{cap} (K_1, B)+\mathrm{cap} (K_2,B).
		\end{align*}
	\end{enumerate}
\end{proof}

We would like to remove the restriction of compact sets when defining a capacity. For this purpose, when $U \subset B'$ is open, we set the \textit{inner capacity} 
\begin{align*}
\mathrm{cap}_{\ast}(U,B):=\sup_{K \subset U,\, K \, \text{compact}} \mathrm{cap} (K,B).	
\end{align*}
Then for an arbitrary set $E \subset B'$, we set the \textit{outer capacity}
\begin{align*}
\mathrm{cap}^{\ast}(E,B):=\inf_{E \subset U \subset B',\, U \, \text{open}} \mathrm{cap}_{\ast}(U,B).
\end{align*}

\begin{lemma}\label{capacitable}
	Fix a ball $B=B_{2r}(x_0)$. For a compact subset $K$ of $B'=B_{7r/5}(x_0)$, we have 
	\begin{align*}
	\mathrm{cap}(K, B)=\mathrm{cap}^{\ast}(K, B).
	\end{align*}
	In other words, there is no ambiguity in having two different definitions for the capacity of compact sets.
\end{lemma}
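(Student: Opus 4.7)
The plan is to prove the two inequalities $\mathrm{cap}(K,B) \le \mathrm{cap}^{\ast}(K,B)$ and $\mathrm{cap}^{\ast}(K,B) \le \mathrm{cap}(K,B)$ separately. Both should follow directly from the monotonicity and monotone continuity properties already proved in Lemma \ref{brelot2}, together with the definitions of inner and outer capacity; no new viscosity machinery is needed.

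For the easy direction, I would observe that whenever $U$ is an open set with $K \subset U \subset B'$, the compact set $K$ itself is admissible in the supremum defining $\mathrm{cap}_{\ast}(U,B)$, so $\mathrm{cap}_{\ast}(U,B) \ge \mathrm{cap}(K,B)$. Taking the infimum over all such $U$ yields $\mathrm{cap}^{\ast}(K,B) \ge \mathrm{cap}(K,B)$ immediately.

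For the nontrivial direction, I would construct a sequence of shrinking open neighborhoods of $K$ whose closures remain inside $B'$. Since $K$ is a compact subset of the open set $B' = B_{7r/5}(x_0)$, there exists $\delta_0 > 0$ such that the $\delta_0$-neighborhood of $K$ is still contained in $B'$. For each large $j$ set
\begin{align*}
U_j := \{x \in \mathbb{R}^n : \mathrm{dist}(x, K) < 1/j\}, \qquad K_j := \overline{U_j}.
\end{align*}
Then $K_j$ is compact, $K_j \subset B'$, $K_1 \supset K_2 \supset \cdots$, and $\bigcap_{j} K_j = K$. For any compact $K' \subset U_j$, Lemma \ref{brelot2}(ii) gives $\mathrm{cap}(K', B) \le \mathrm{cap}(K_j, B)$, whence $\mathrm{cap}_{\ast}(U_j, B) \le \mathrm{cap}(K_j, B)$. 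Applying the monotone continuity property, Lemma \ref{brelot2}(iii), to $\{K_j\}$ gives
\begin{align*}
\mathrm{cap}^{\ast}(K, B) \le \lim_{j \to \infty} \mathrm{cap}_{\ast}(U_j, B) \le \lim_{j \to \infty} \mathrm{cap}(K_j, B) = \mathrm{cap}(K, B),
\end{align*}
completing the proof.

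There is really no serious obstacle: the result is essentially an outer-regularity statement and the only care required is the choice of the neighborhoods $U_j$ so that their closures remain strictly inside $B'$, which is automatic from compactness of $K$ inside the open ball $B'$. The substantive ingredient, the monotone continuity of $\mathrm{cap}(\cdot, B)$ along decreasing sequences of compact sets, has already been established via Choquet's topological lemma in the proof of Lemma \ref{brelot2}(iii).
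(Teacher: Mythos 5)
Your proposal is correct and follows essentially the same route as the paper: the easy inequality from the definitions, and the reverse inequality via the nested neighborhoods $U_j$, the monotonicity of $\mathrm{cap}(\cdot,B)$, and the decreasing-sequence continuity of Lemma \ref{brelot2}(iii). The only cosmetic difference is your choice $K_j=\overline{U_j}$ versus the paper's closed $1/j$-neighborhood, which changes nothing in the argument.
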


\begin{proof}
	\begin{enumerate}[(i)]
		\item For any open set $U$ satisfying $K \subset U \subset B'$, the definition of the inner capacity yields that
		\begin{align*}
		\mathrm{cap}(K, B) \leq \mathrm{cap}_{\ast}(U, B).
		\end{align*}
		By taking the infimum over such $U$, we conclude that
		\begin{align*}
			\mathrm{cap}(K, B) \leq \mathrm{cap}^{\ast}(K, B).
		\end{align*}
		
		\item Define a sequence of compact sets $\{K_j\}_{j=1}^{\infty}$ by 
		\begin{align*}
		K_j:=\{x \in \mathbb{R}^n: \mathrm{dist}(x, K) \leq 1/j\},
		\end{align*}
		and a sequence of open sets $\{U_j\}_{j=1}^{\infty}$ by 
		\begin{align*}
		U_j:=\{x \in \mathbb{R}^n: \mathrm{dist}(x, K) < 1/j\}.
		\end{align*}
		We may assume $K_1 \subset B'$.	Then we have
		\begin{align*}
		B' \supset K_1 \supset U_1 \supset K_2 \supset U_2 \supset \cdots \supset K, \quad \text{and} \quad K=\bigcap_j K_j.
		\end{align*}
		Applying Lemma \ref{brelot2} (ii), it follows that
		\begin{align*}
			\mathrm{cap}_{\ast}(U_j, B) \leq \mathrm{cap}(K_j, B).
		\end{align*}
		By the definition of outer capacity,
		\begin{align*}
		\mathrm{cap}^{\ast}(K, B) \leq \mathrm{cap}_{\ast}(U_j, B) \leq \mathrm{cap}(K_j, B), \quad \text{for any $j \in \mathbb{N}$.}
		\end{align*}
		Now letting $j \to \infty$, Lemma \ref{brelot2} (iii) leads to 
		\begin{align*}
		\mathrm{cap}^{\ast}(K, B) \leq \mathrm{cap}(K, B).
		\end{align*}
	\end{enumerate}
\end{proof}

Roughly speaking, we have the following correspondance:
	\begin{align*}
	\text{the variational capacity} &\longleftrightarrow \text{divergence operator},\\
	\text{the height capacity} &\longleftrightarrow \text{non-divergence operator}.
	\end{align*}
	In the following lemma, we explain why the definition of height capacity is reasonable in some sense. In other words, we claim that for the Laplacian operator $\Delta$, two definitions of capacity are comparable. 

\begin{lemma}[The variational capacity and the height capacity]
	Suppose $n \geq 3$ and fix two balls $B=B_{2r}(x_0)$, $B'=B_{7r/5}(x_0)$ and a point $y_0=\frac{3}{2}re_1+x_0 \in \partial B_{3r/2}(x_0)$. Then for any compact set $K \subset B'$, we have
	\begin{align*}
	\mathrm{cap}_{\Delta, \mathrm{var}}(K, B) \sim \mathrm{cap}_{\Delta, \mathrm{height}}(K, B)\, r^{n-2},
	\end{align*}
	where the comparable constant depends only on $n$. 
\end{lemma}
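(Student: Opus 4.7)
The plan is to identify both capacities with quantities involving a common harmonic function, namely the capacity potential $u = \hat{R}_K^1(B)$, and then compare those quantities via Green's function bounds. First I would invoke \Cref{cappoten} to observe that $\mathrm{cap}_{\Delta,\mathrm{height}}(K,B) = u(y_0)$, where $u$ is the harmonic function in $B \setminus K$ with boundary values $1$ on $\partial K$ and $0$ on $\partial B$ (in the Perron sense). For the Laplacian, classical potential theory gives an equilibrium measure $\mu$ supported on $K$ such that $u(y) = \int_K G_B(y,x)\, d\mu(x)$ for $y \in B \setminus K$, where $G_B$ is the Dirichlet Green's function of $B$. The standard variational identity $\mathrm{cap}_{\Delta,\mathrm{var}}(K,B) = \mu(K)$ then follows by integration by parts: $\int_B |\nabla u|^2\, dx = \int u(-\Delta u) = \int 1\, d\mu = \mu(K)$, since $u = 1$ quasi-everywhere on $\mathrm{supp}(\mu)$.

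The remaining task is to establish two-sided bounds of the form $G_B(y_0, x) \sim r^{2-n}$, with constants depending only on $n$, uniformly for $x \in K \subset \overline{B'}$. The geometry is favorable: since $|y_0 - x_0| = \tfrac{3}{2}r$ and $|x - x_0| \leq \tfrac{7}{5}r$, we get $\tfrac{r}{10} \leq |y_0 - x| \leq \tfrac{29}{10}r$, so the singular part $c_n|y_0 - x|^{2-n}$ is comparable to $r^{2-n}$. Using the explicit Kelvin-reflection formula for the Green's function of a ball, or equivalently the boundary Harnack principle combined with the fact that both $y_0$ and $K$ lie at distance of order $r$ from $\partial B$, the boundary correction $c_n|y_0-x|^{2-n} - G_B(y_0, x)$ is a positive harmonic function bounded above by a constant times $r^{2-n}$ with sufficient margin to preserve the lower bound $G_B(y_0, x) \gtrsim r^{2-n}$.

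Combining the two steps, one has
\begin{equation*}
u(y_0) \;=\; \int_K G_B(y_0, x)\, d\mu(x) \;\sim\; r^{2-n}\, \mu(K),
\end{equation*}
which rearranges to the claimed $\mathrm{cap}_{\Delta,\mathrm{var}}(K,B) \sim r^{n-2}\, \mathrm{cap}_{\Delta,\mathrm{height}}(K,B)$. By the scaling symmetry $G_{B_{2r}(x_0)}(x,y) = r^{2-n} G_{B_2(0)}((x-x_0)/r,(y-x_0)/r)$, in practice it is cleanest to verify the Green's function estimate once at the unit scale and then transport it to radius $r$.

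The main obstacle is the Green's function estimate itself, but on a ball this is entirely routine from the explicit Poisson/Kelvin formula; the only subtlety worth being careful about is that $u$ may fail to equal $1$ at irregular points of $\partial K$. This is harmless, however, because the identity $\int u\, d\mu = \mu(K)$ holds in the $\mu$-a.e.\ sense (thin sets carry no equilibrium mass), and this is all that is needed to identify $\mu(K)$ with the variational capacity.
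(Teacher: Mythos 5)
Your argument is correct in outline but follows a genuinely different route from the paper. The paper never introduces the equilibrium measure or the Green's function: it writes $\mathrm{cap}_{\Delta,\mathrm{var}}(K,B)=\int_{B\setminus K}|\nabla u|^2=-\int_{\partial B}\partial u/\partial\mathbf{n}$ by the divergence theorem, uses Harnack on $\partial B_{3r/2}$ to make $\min_{\partial B_{3r/2}}u$ and $\max_{\partial B_{3r/2}}u$ comparable to $u(y_0)=\mathrm{cap}_{\Delta,\mathrm{height}}(K,B)$, and then traps $u$ in the annulus $B_{2r}\setminus B_{3r/2}$ between two explicit radial harmonic functions built from $|x|^{2-n}$; reading off their normal derivatives on $\partial B_{2r}$ gives the flux, hence the variational capacity, up to constants $c(n)\,m_{\pm}/r$ times the surface measure $\sim r^{n-1}$. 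Your route instead identifies $\mathrm{cap}_{\Delta,\mathrm{var}}(K,B)=\mu(K)$ via the Riesz representation $u=\int_K G_B(\cdot,x)\,d\mu(x)$ and reduces everything to the two-sided bound $G_B(y_0,x)\sim r^{2-n}$ on $\overline{B'}$. Both are valid; yours is the more classical potential-theoretic argument and handles the regularity of $\partial K$ more cleanly (the paper's flux integral over $\partial K$ is formal for general compact $K$, whereas your $\int u\,d\mu=\mu(K)$ q.e.\ identity is airtight), while the paper's is more elementary and closer in spirit to the barrier-and-Harnack toolkit used for the non-divergence capacity elsewhere in the text.

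One step of your sketch should be repaired. The claim that the boundary correction $c_n|y_0-x|^{2-n}-G_B(y_0,x)$ is dominated by the singular part ``with sufficient margin'' does not survive a worst-case subtraction: for $x\in\overline{B'}$ one only gets $|y_0-x|\le \tfrac{29}{10}r$ while the Kelvin image term can be as small as roughly $(\tfrac{19}{20}r)^{2-n}$ in modulus of its argument, and since these extremes occur at different configurations the naive difference of bounds is negative. The lower bound $G_B(y_0,x)\gtrsim r^{2-n}$ is nonetheless true and is most easily obtained exactly as you suggest in your closing remark: rescale to the unit ball, note that $G_{B_2(0)}(\tfrac32 e_1,\cdot)$ is positive and harmonic on a neighborhood of $\overline{B_{7/5}}$ (which does not contain $\tfrac32 e_1$), apply Harnack there to compare with the explicitly computable value $G_{B_2(0)}(\tfrac32 e_1,0)=c_n\bigl((3/2)^{2-n}-2^{2-n}\bigr)>0$, and scale back. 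With that substitution the proof is complete.
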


\begin{proof}
	We may assume $x_0=0$. We denote by $u$ the capacity potential with respect to $K$ in $B$. Note that $u$ is harmonic in $B \setminus K$.
	
	We begin with the variational capacity:
	\begin{align*}
	\mathrm{cap}_{\Delta, \mathrm{var}}(K, B)=\int_{B \setminus K}|\nabla u|^2 \,\mathrm{d}x=\int_{\partial K} \frac{\partial u}{\partial \mathbf{n}} \,\mathrm{d}s=-\int_{\partial B} \frac{\partial u}{\partial \mathbf{n}} \,\mathrm{d}s.
	\end{align*}
	Here we applied the divergence theorem and used the behavior of $u$ on the boundary.
	
	On the other hand, recalling the definition of height capacity, we have
	\begin{align*}
	\mathrm{cap}_{\Delta, \mathrm{height}}(K, B)=u(y_0).
	\end{align*}
	By Harnack inequality, there exist constants $c_1, c_2>0$ which only depend on $n$ such that
	\begin{align*}
	c_1 u(y_0) \leq u(x) \leq c_2u(y_0) \quad \text{for any $x \in \partial B_{3r/2}$}.
	\end{align*}
	Thus, if we set $m_-:=\min_{\partial B_{3r/2}}u$ and $m_+:=\max_{\partial B_{3r/2}}u$, then we have 
	\begin{align*}
	c_1 \mathrm{cap}_{\Delta, \mathrm{height}}(K, B) \leq m_- \leq m_+ \leq c_2\mathrm{cap}_{\Delta, \mathrm{height}}(K, B).
	\end{align*}
	Moreover, we consider two barriers $h^{\pm}$ which solve the Dirichlet problem in $B_{2r} \setminus B_{3r/2}$:
	\begin{align*}
	\left\{ \begin{array}{ll} 
	\Delta h^{\pm}=0 & \text{in $B_{2r} \setminus B_{3r/2}$},\\
	h^{\pm}=m_{\pm} & \text{on $\partial B_{3r/2}$},\\
	h^{\pm}=0 & \textrm{on $\partial B_{2r}$}.
	\end{array} \right.
	\end{align*}
	Indeed, using the homogeneous solution $V(x)=|x|^{2-n}$, one can compute $h^{\pm}$ explicitly:
	\begin{align*}
	h^{\pm}(x)=m_{\pm} \cdot \frac{|x|^{2-n}-(2r)^{2-n}}{(3r/2)^{2-n}-(2r)^{2-n}}.
	\end{align*}
	Then the comparison principle between $u$ and $h^{\pm}$ leads to
	\begin{align*}
	h^{-} \leq u \leq h^+ \quad \text{in $B_{2r} \setminus B_{3r/2}$},
	\end{align*}
	and so
	\begin{align*}
	\frac{c(n)\,m_-}{r}=-\frac{\partial h^-}{\partial \mathbf{n}} \leq -\frac{\partial u}{\partial \mathbf{n}} \leq -\frac{\partial h^+}{\partial \mathbf{n}}=\frac{c(n)\,m_+}{r} \quad \text{on $\partial B$}.
	\end{align*}
	Therefore, we conclude that
	\begin{align*}
	c_1(n)r^{n-2}\mathrm{cap}_{\Delta, \mathrm{height}}(K, B) \leq  \mathrm{cap}_{\Delta, \mathrm{var}}(K, B) \leq c_2(n)r^{n-2}\mathrm{cap}_{\Delta, \mathrm{height}}(K, B).
	\end{align*}
\end{proof}

Next, we estimate the capacity of a ball $B_{\rho}$ with respect to the larger ball $B_{2r}$. Indeed, the capacity of a ball can capture the growth rate of the homogeneous solution $V$ of $F$. 

\begin{lemma}[Capacitary estimate for balls]\label{capball}
	Let $B=B_{2r}(x_0)$, $B'=B_{\frac{7}{5}r}(x_0)$ and $y_0=x_0+\frac{3}{2}re_1$. Then for any $0< \rho < \frac{7}{5}r$, there exists a constant $c=c(n, \lambda, \Lambda)>0$ which is independent of $r$ and $\rho$ such that
	\begin{enumerate}[(i)]
		\item ($\alpha^{\ast}>0$)
		\begin{align*}
		\frac{1}{c}\, \frac{r^{-\alpha^{\ast}}}{\rho^{-\alpha^{\ast}}-(2r)^{-\alpha^{\ast}}} \leq \mathrm{cap}_F(\overline{B_{\rho}(x_0)}, B_{2r}(x_0)) \leq  \frac{cr^{-\alpha^{\ast}}}{\rho^{-\alpha^{\ast}}-(2r)^{-\alpha^{\ast}}}.
		\end{align*}
		
		\item ($\alpha^{\ast}<0$)
		\begin{align*}
		\frac{1}{c}\, \frac{r^{-\alpha^{\ast}}}{(2r)^{-\alpha^{\ast}}-\rho^{-\alpha^{\ast}}} \leq \mathrm{cap}_F(\overline{B_{\rho}(x_0)}, B_{2r}(x_0)) \leq  \frac{cr^{-\alpha^{\ast}}}{(2r)^{-\alpha^{\ast}}-\rho^{-\alpha^{\ast}}}.
		\end{align*}
		
		\item ($\alpha^{\ast}=0$)
		\begin{align*}
		\frac{1}{c}\,\frac{1}{\log(2r)-\log{\rho}} \leq \mathrm{cap}_F(\overline{B_{\rho}(x_0)}, B_{2r}(x_0))    \leq \frac{c}{\log(2r)-\log{\rho}}.
		\end{align*} 
	\end{enumerate}

\end{lemma}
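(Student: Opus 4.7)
The plan is to use the identity $\mathrm{cap}_F(\overline{B_\rho(x_0)}, B_{2r}(x_0)) = u(y_0)$, where $u = \hat R_{\overline{B_\rho(x_0)}}^1(B_{2r}(x_0))$ is the $F$-capacity potential, and to sandwich $u$ between two explicit barriers built from the homogeneous solution $V$ supplied by \Cref{fund}. By \Cref{balayage} and \Cref{brelot1}, $u$ is $F$-harmonic in the annulus $A := B_{2r}(x_0) \setminus \overline{B_\rho(x_0)}$, equals $1$ on $\partial B_\rho(x_0)$, and has Perron boundary value $0$ on $\partial B_{2r}(x_0)$; since $\rho < 7r/5 < 3r/2 < 2r$, both $y_0$ and the entire sphere $\partial B_{3r/2}(x_0)$ lie inside $A$. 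Put $v(x) := V(x-x_0)$; by (F2), every affine combination $Av + B$ with $A > 0$ is $F$-harmonic on $\mathbb{R}^n \setminus \{x_0\}$. Writing $m_s := \min_{\partial B_s(x_0)} v$ and $M_s := \max_{\partial B_s(x_0)} v$, the homogeneity in \Cref{fund} gives $m_s = s^{-\alpha^*} m_1$, $M_s = s^{-\alpha^*} M_1$ when $\alpha^* \neq 0$ and $m_s = m_1 - \log s$, $M_s = M_1 - \log s$ in the log case, with the signs and oscillation of $m_1, M_1$ controlled by universal constants via the $C^{1,\gamma}$ regularity of $V$ together with Harnack inequality for $v$ (or $-v$) on the unit sphere.

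For the upper bound I test against
\[ h_+(x) := \frac{v(x) - m_{2r}}{m_\rho - m_{2r}}, \]
which is $F$-harmonic on $\mathbb{R}^n \setminus \{x_0\}$ because $m_\rho > m_{2r}$ in each regime of $\alpha^*$. On $\partial B_\rho(x_0)$ one has $h_+ \geq 1$ and on $\partial B_{2r}(x_0)$ one has $h_+ \geq 0$, so the comparison principle \Cref{cpsuperhar} applied in $A$ yields $u \leq h_+$, and evaluating at $y_0$ gives
\[ \mathrm{cap}_F \bigl( \overline{B_\rho(x_0)}, B_{2r}(x_0) \bigr) \leq \frac{M_{3r/2} - m_{2r}}{m_\rho - m_{2r}}. \]
Substituting the explicit scaling of $m_s, M_s$ and collapsing universal constants recovers the three upper bounds in the statement. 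For the lower bound I use
\[ g(x) := \frac{v(x) - M_{2r}}{M_\rho - M_{2r}}, \]
also $F$-harmonic on $\mathbb{R}^n \setminus \{x_0\}$. The inequalities $g \leq 1 = u$ on $\partial B_\rho(x_0)$ and $g \leq 0 = u$ on $\partial B_{2r}(x_0)$ yield $g \leq u$ in $A$ by comparison. Choosing $y' \in \partial B_{3r/2}(x_0)$ with $v(y') = M_{3r/2}$ gives
\[ g(y') = \frac{M_{3r/2} - M_{2r}}{M_\rho - M_{2r}}, \]
whose numerator is a positive universal multiple of $r^{-\alpha^*}$ when $\alpha^* \neq 0$ (resp.\ a positive universal constant in the log case) by the scaling of $M_s$. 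Finally, Harnack inequality applied to the nonnegative $F$-harmonic function $u$ on the sphere $\partial B_{3r/2}(x_0) \subset A$ delivers $u(y_0) \geq c\, u(y') \geq c\, g(y')$, which is exactly the desired lower bound.

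The main technical obstacle is the non-radiality of $V$ in the fully nonlinear regime: unlike for the Laplacian, $v$ is not constant on spheres, so Harnack inequality has to be invoked twice — once on the unit sphere (to control the Harnack ratio $M_1/|m_1|$ and the oscillation of $V$) and once on $\partial B_{3r/2}(x_0)$ (to replace the specific evaluation point $y_0$ by a convenient maximizer $y'$ of $v$). Beyond this, the remaining work is bookkeeping: one must check case by case in the sign of $\alpha^*$ that $m_\rho - m_{2r}$ and $M_{3r/2} - M_{2r}$ are strictly positive with the correct $r^{-\alpha^*}$ (resp.\ logarithmic) scaling — for instance, via $(2/3)^{\alpha^*} > 2^{-\alpha^*}$ for $\alpha^* > 0$ — with all implicit constants depending only on $n, \lambda, \Lambda$.
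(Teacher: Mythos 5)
Your proposal is correct and follows essentially the same route as the paper: identify $\mathrm{cap}_F(\overline{B_\rho},B_{2r})$ with the value at $y_0$ of the Perron solution in the annulus, sandwich it between two affine rescalings of the homogeneous solution $V$ via the comparison principle, and use Harnack on $\partial B_{3r/2}(x_0)$ to pass from $y_0$ to a convenient extremal point of $V$. The only cosmetic difference is that you phrase the barriers through $m_s,M_s$ and make explicit (via Harnack on the unit sphere) that the ratio $M_1/m_1$ is controlled by $n,\lambda,\Lambda$, whereas the paper normalizes directly by $V_\pm$ and evaluates at $\tfrac{3r}{2}x_\pm$; both are the same argument.
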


\begin{proof}
	We may assume $x_0=0$. Applying the argument after the definition of a capacity, we have 
	\begin{align*}
	\mathrm{cap}_F(\overline{B_{\rho}}, B_{2r})=\hat{R}_{\overline{B_{\rho}}}(B_{2r})(y_0)=\overline{H}_f(B_{2r}\setminus \overline{B_{\rho}})(y_0), 
	\end{align*}
	where the boundary data $f$ is given by
	\begin{align*}
	f=	
	\left\{ \begin{array}{ll} 
	1 & \text{on $\partial B_{\rho}$},\\
	0 & \text{in $\partial B_{2r}$}.
	\end{array} \right.
	\end{align*}
	Moreover, since a ball is a regular domain, we can write $\overline{H}_f(B_{2r}\setminus \overline{B_{\rho}})=v$ where $v$ is the unique solution of the Dirichlet problem 
	\begin{align*}
	\left\{ \begin{array}{ll} 
	F(D^2v)=0 & \text{in $B_{2r}\setminus \overline{B_{\rho}}$},\\
	v=1 & \text{on $\partial B_{\rho}$},\\
	v=0 & \text{in $\partial B_{2r}$}.
	\end{array} \right.
	\end{align*}
	Note that $\overline{H}_f(B_{2r}\setminus \overline{B_{\rho}})$ is continuous upto the boundary. We now split three cases according to the sign of $\alpha^{\ast}(F)$.
	\begin{enumerate}[(i)]
		\item ($\alpha^{\ast}>0$) In this case, for the homogeneous solution $V(x)=|x|^{-\alpha^{\ast}}V\Big(\frac{x}{|x|}\Big)$, denote 
		\begin{align*}
		V_+:=\max_{|x|=1}V(x) \quad\text{and}\quad 	V_-:=\min_{|x|=1}V(x)
		\end{align*}
		and choose two points $x_+, x_-$ with $|x_+|=1=|x_-|$ so that 
		\begin{align*}
		V(x_+)=V_+ \quad\text{and}\quad V(x_-)=V_-.
		\end{align*}
		We define two functions
		\begin{align*}
		v^+(x):=\frac{V(x)-(2r)^{-\alpha^{\ast}}V_-}{[\rho^{-\alpha^{\ast}}-(2r)^{-\alpha^{\ast}}]V_-} \quad\text{and}\quad v^-(x):=\frac{V(x)-(2r)^{-\alpha^{\ast}}V_+}{[\rho^{-\alpha^{\ast}}-(2r)^{-\alpha^{\ast}}]V_+}. 
		\end{align*}
		Then we have
		\begin{align*}
		\text{$F(D^2v^+)=0=F(D^2v^-)$ in $B_{2r} \setminus \overline{B_{\rho}}$,  }\\
		\text{$v^+ \geq 1$ on $\partial B_{\rho}$ and $v^+ \geq 0$ on $\partial B_{2r}$, } \\
		\text{$v^- \leq 1$ on $\partial B_{\rho}$ and $v^- \leq 0$ on $\partial B_{2r}$. }
		\end{align*}
		Thus, the comparison principle yields that
		\begin{align*}
		v^- \leq v=\overline{H}_f(B_{2r}\setminus \overline{B_{\rho}})=\hat{R}_{\overline{B_{\rho}}}(B_{2r}) \leq v^+ \quad \text{in $B_{2r} \setminus \overline{B_{\rho}}$}.
		\end{align*}
		Finally, applying Harnack inequality for $v$ on $\partial B_{3r/2}$, there exists a constant $c_1>0$ which is independent of $r>0$ such that
		\begin{align*}
		\frac{1}{c_1} v\Big(\frac{3rx_+}{2}\Big) \leq v(y_0) \leq c_1v\Big(\frac{3rx_-}{2}\Big).
		\end{align*}
		Therefore, we have the desired upper bound:
		\begin{align*}
		\mathrm{cap}_F(\overline{B_{\rho}}, B_{2r})=v(y_0) \leq c_1 v\Big(\frac{3rx_-}{2}\Big) \leq c_1 v^+\Big(\frac{3rx_-}{2}\Big)=&c_1 \frac{(3r/2)^{-\alpha^{\ast}}-(2r)^{-\alpha^{\ast}}}{\rho^{-\alpha^{\ast}}-(2r)^{-\alpha^{\ast}}} 
		\\
		=& \frac{cr^{-\alpha^{\ast}}}{\rho^{-\alpha^{\ast}}-(2r)^{-\alpha^{\ast}}}.
		\end{align*}
		Similarly, we derive the lower bound:
		\begin{align*}
		\mathrm{cap}_F(\overline{B_{\rho}}, B_{2r})=v(y_0) \geq \frac{1}{c_1} v\Big(\frac{3rx_+}{2}\Big) \geq \frac{1}{c_1} v^-\Big(\frac{3rx_+}{2}\Big)=&\frac{1}{c_1} \frac{(3r/2)^{-\alpha^{\ast}}-(2r)^{-\alpha^{\ast}}}{\rho^{-\alpha^{\ast}}-(2r)^{-\alpha^{\ast}}}		\\
		=&\frac{1}{c} \frac{r^{-\alpha^{\ast}}}{\rho^{-\alpha^{\ast}}-(2r)^{-\alpha^{\ast}}}.
		\end{align*}

		\item ($\alpha^{\ast}<0$) For simplicity, we assume that the upward-pointing homogeneous solution is given by 
		\begin{align*}
		V(x)=-|x|^{-\alpha^{\ast}}.
		\end{align*}
		Then we can explicitly write the capacity potential:
		\begin{align*}
		v(x)=\frac{(2r)^{-\alpha^{\ast}}-|x|^{-\alpha^{\ast}}}{(2r)^{-\alpha^{\ast}}-\rho^{-\alpha^{\ast}}}.
		\end{align*}
		Thus,  
		\begin{align*}
		\mathrm{cap}_F(\overline{B_{\rho}}, B_{2r})=v(y_0) \sim  \frac{r^{-\alpha^{\ast}}}{(2r)^{-\alpha^{\ast}}-\rho^{-\alpha^{\ast}}}.
		\end{align*}
		For general $V$, we can compute by a similar argument as in part (i). For example, if $V(x)=-|x|^{-\alpha^{\ast}}V\Big(\frac{x}{|x|}\Big)$, then define
		\begin{align*}
		v^+(x):=\frac{(2r)^{-\alpha^{\ast}}V_++V(x)}{[(2r)^{-\alpha^{\ast}}-\rho^{-\alpha^{\ast}}]V_+} \quad\text{and}\quad v^-(x):=\frac{(2r)^{-\alpha^{\ast}}V_-+V(x)}{[(2r)^{-\alpha^{\ast}}-\rho^{-\alpha^{\ast}}]V_-}. 
		\end{align*}
		
		\item ($\alpha^{\ast}=0$) Again for simplicity, we may assume the upward-pointing homogeneous solution is given by
		\begin{align*}
		V(x)=-\log|x|.
		\end{align*}
		Similarly, we can explicitly write the capacity potential:
		\begin{align*}
		v(x)=\frac{\log(2r)-\log{|x|}}{\log(2r)-\log{\rho}}.
		\end{align*}
		Thus,
		\begin{align*}
		\mathrm{cap}_F(\overline{B_{\rho}}, B_{2r})=v(y_0) \sim  \frac{1}{\log(2r)-\log{\rho}}.
		\end{align*}
		For general $V$, we can compute by a similar argument as in part (i). For example, if $V(x)=V\Big(\frac{x}{|x|}\Big)-\log|x|$, then define
		\begin{align*}
		v^+(x):=\frac{\log(2r)-V_-+V(x)}{\log(2r)-\log{\rho}} \quad\text{and}\quad v^-(x):=\frac{\log(2r)-V_++V(x)}{\log(2r)-\log{\rho}}. 
		\end{align*}
	\end{enumerate}

\end{proof}

We can observe that the capacity of a single point is determined according to the sign of the scaling exponent $\alpha^{\ast}(F)$. In fact, one can expect the results of the following lemma taking $\rho \to 0^+$ in the capacitary estimate, Lemma \ref{capball}. 
\begin{lemma}\label{sinpoint}
	For $z_0 \in \mathbb{R}^n$, choose a ball $B=B_{2r}(x_0)$ so that $z_0 \in B'=B_{7r/5}(x_0)$. 
	\begin{enumerate}[(i)]
		\item If $\alpha^{\ast}(F)\geq0$, then
		\begin{align*}
		\mathrm{cap}_F (\{z_0\}, B)=0.
		\end{align*}
		
		\item If $\alpha^{\ast}(F)<0$, then
		\begin{align*}
		\mathrm{cap}_F (\{z_0\}, B)>0.
		\end{align*}
	\end{enumerate}
\end{lemma}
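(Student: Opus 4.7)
My plan is to reduce everything to capacities of shrinking balls $\overline{B_\rho(z_0)}$ centered at $z_0$ via \Cref{brelot2}(iii), and then to construct explicit comparison functions from the shifted homogeneous solution $V(\cdot - z_0)$. The key subtlety is that the capacitary estimate in \Cref{capball} covers only the concentric setup (small ball centered at $x_0$), whereas $z_0$ may be placed anywhere in $B'$; handling this off-center issue drives the argument for part (ii).

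For part (i), with $\alpha^{\ast}\geq 0$, I will exhibit, for each small $\rho>0$, an admissible function for $\mathrm{cap}_F(\{z_0\},B)$ by setting
\begin{align*}
u_\rho(x) := \frac{V(x-z_0)}{V_\rho} \quad (\alpha^{\ast}>0), \qquad u_\rho(x) := \frac{V(x-z_0)+C_0}{V_\rho+C_0} \quad (\alpha^{\ast}=0),
\end{align*}
where $V_\rho := \min_{|y|=\rho} V(y)$ and $C_0>0$ is a fixed constant large enough to guarantee positivity on $\overline{B}-z_0$. Extending by $+\infty$ at $z_0$, each $u_\rho$ is $F$-superharmonic in $B$ (harmonic off $\{z_0\}$, and the supersolution test is vacuous at the singularity), nonnegative, and satisfies $u_\rho\geq 1$ on $\overline{B_\rho(z_0)}$ by the radial monotonicity of $V$ coming from \Cref{fund}. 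Since $\{z_0\}\subset \overline{B_\rho(z_0)}$, $u_\rho$ is admissible for $\mathrm{cap}_F(\{z_0\},B)$, and a short computation shows $u_\rho(y_0)\to 0$ at rate $\rho^{\alpha^{\ast}}$ (resp.\ $1/|\log\rho|$) because $V_\rho\sim \rho^{-\alpha^{\ast}}$ (resp.\ $-\log\rho$). This forces $\mathrm{cap}_F(\{z_0\},B)=0$.

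For part (ii), with $\alpha^{\ast}<0$, my plan is to extract a uniform positive lower bound by localizing the estimate. First, fix $R>0$ small enough that $\overline{B_{2R}(z_0)}\subset B$ \emph{and} $y_0\notin \overline{B_R(z_0)}$ (both achievable since $z_0\in B'$ and $|y_0-z_0|>0$); set $\tilde B:=B_{2R}(z_0)$ and $\tilde y_0:=z_0+\tfrac{3R}{2}e_1$. Applying \Cref{capball}(ii) to the \emph{concentric} pair $(\overline{B_\rho(z_0)},\tilde B)$ gives, for all small $\rho$,
\begin{align*}
\mathrm{cap}_F(\overline{B_\rho(z_0)},\tilde B)\;\geq\; \frac{1}{c}\cdot\frac{R^{-\alpha^{\ast}}}{(2R)^{-\alpha^{\ast}}-\rho^{-\alpha^{\ast}}} \;\xrightarrow{\;\rho\to 0^+\;}\; \frac{2^{\alpha^{\ast}}}{c}>0,
\end{align*}
using that $\rho^{-\alpha^{\ast}}\to 0$ when $\alpha^{\ast}<0$. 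Next I compare the two capacity potentials $u:=\hat R^1_{\overline{B_\rho(z_0)}}(B)$ and $\tilde u:=\hat R^1_{\overline{B_\rho(z_0)}}(\tilde B)$ on their common $F$-harmonic region $\tilde B\setminus \overline{B_\rho(z_0)}$: boundary comparison ($u\geq \tilde u=0$ on $\partial\tilde B$ and $u=\tilde u=1$ on $\partial B_\rho(z_0)$) combined with \Cref{cpsuperhar} yields $u\geq \tilde u$ there, so $u(\tilde y_0)\geq \mathrm{cap}_F(\overline{B_\rho(z_0)},\tilde B)$. A Harnack chain applied to the nonnegative $F$-harmonic function $u$ in the fixed annular domain $B\setminus \overline{B_R(z_0)}$ then transports this lower bound from $\tilde y_0$ to $y_0$ with a Harnack constant that depends only on the fixed geometry. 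Passing $\rho\to 0^+$ through \Cref{brelot2}(iii) concludes $\mathrm{cap}_F(\{z_0\},B)>0$.

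The main obstacle is exactly this off-center mismatch: \Cref{capball} is a concentric statement, so one cannot apply it verbatim to $\mathrm{cap}_F(\overline{B_\rho(z_0)},B)$. Freezing $R$ \emph{before} sending $\rho\to 0^+$ is essential, because it keeps the Harnack transfer region $B\setminus \overline{B_R(z_0)}$ independent of the vanishing scale, so that the Harnack constant stays purely geometric. Beyond that, both parts reduce to routine manipulations of the tools developed earlier in this section.
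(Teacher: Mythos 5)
Your proposal is correct. Part (i) is essentially the paper's own argument: both exploit that the shifted homogeneous solution $V(\cdot-z_0)$ blows up at $z_0$, so a suitably normalized (and, for $\alpha^{\ast}=0$, vertically shifted) multiple of it is admissible for $\Phi^1_{\{z_0\}}$ with arbitrarily small value at $y_0$; the paper multiplies by $\varepsilon\to 0$ after subtracting $\min_{\partial B}V(\cdot-z_0)$, while you divide by $V_\rho\to\infty$, which is the same mechanism. In part (ii), however, you take a genuinely different route. The paper works directly with the singleton: it builds the explicit $F$-subharmonic function $u(x)=1+V(x-z_0)/M$, checks $u\in\mathcal{L}_f(B\setminus\{z_0\})$, deduces $\hat R^1_{\{z_0\}}\geq u>0$ somewhere, and finishes with Harnack. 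You instead route through the concentric capacitary estimate of \Cref{capball}(ii) on a frozen small ball $B_{2R}(z_0)$, a domain-monotonicity comparison of the two capacity potentials via \Cref{cpsuperhar}, a $\rho$-independent Harnack chain in $B\setminus\overline{B_R(z_0)}$, and the continuity of capacity along decreasing compacts from \Cref{brelot2}(iii). Your version is longer but correctly isolates and resolves the off-center issue (that \Cref{capball} is a concentric statement), and it has the advantage of not requiring any fresh barrier construction — it only recombines lemmas already established; the paper's version is shorter and gives the positivity in one stroke from the sign and homogeneity of $V$ when $\alpha^{\ast}<0$. Both arguments ultimately rest on the homogeneous solution, since \Cref{capball} is itself proved with it.
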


\begin{proof}
	\begin{enumerate}[(i)]
		\item Let
		\begin{align*}
		V(x)=
		\left\{ \begin{array}{ll} 
		|x|^{-\alpha^{\ast}} V\Big(\frac{x}{|x|}\Big)	 & \text{if $\alpha^{\ast}>0$},\\
		-\log|x|+V\Big(\frac{x}{|x|}\Big) & \text{if $\alpha^{\ast}=0$}.
		\end{array} \right.
		\end{align*}
		be the homogeneous solution of $F$. Then for $m:=\min_{x \in \partial B} V(x-z_0)$ and any $\varepsilon>0$, we have
		\begin{align*}
		\varepsilon \cdot [V(x-z_0)-m] \in \Phi_{\{z_0\}}^1 
		\end{align*} 
		due to the minimum principle and $\lim_{x \to z_0}V(x-z_0)=\infty$. Thus,
		\begin{align*}
		\mathrm{cap}(\{z_0\}, B)=\hat{R}_{\{z_0\}}^1(y_0)=R_{\{z_0\}}^1(y_0) \leq \varepsilon \cdot [V(y_0-z_0)-m].
		\end{align*} 
		Since $\varepsilon>0$ is arbitrary, we finish the first part of proof.
		
		\item Let $V(x)=-|x|^{-\alpha^{\ast}}V\Big(\frac{x}{|x|}\Big)$ be the homogeneous solution of $F$. Then for $\max_{x \in \partial B}V(x-z_0)=:-M<0$, we consider
		\begin{align*}
		u(x):=1+\frac{V(x-z_0)}{M}.
		\end{align*}
		Since $\sup_{\partial B}u=0$ and $V$ is a homogeneous function, we have $\sup_{\partial B_{7/5r}}u>0$. On the other hand, recalling Theorem \ref{cappoten}, 
		\begin{align*}
		\hat{R}^1_{\{z_0\}}=\overline{H}_f(\Omega \setminus \{z_0\}) \geq \underline{H}_f(\Omega \setminus \{z_0\}),
		\end{align*}
		where the boundary data $f$ is given by
		\begin{align*}
		f(x)=	
		\left\{ \begin{array}{ll} 
		1 & \text{if $x=z_0$},\\
		0 & \text{if $x \in \partial B$}.
		\end{array} \right.
		\end{align*}
		Then $u \in \mathcal{L}_f$ and so $\underline{H}_f(\Omega \setminus \{z_0\}) \geq u$. Therefore, we conclude that 
		\begin{align*}
			\sup_{\partial B_{7/5r}} \hat{R}^1_{\{z_0\}} >0
		\end{align*}
		and by Harnack inequality, $\mathrm{cap}(\{z_0\}, B)>0$ as desired.
	\end{enumerate}
	
\end{proof}

\subsection{Capacity zero sets}
\begin{definition}
	A set $E$ in $\mathbb{R}^n$ is said to be of ($F$-)\textit{capacity zero}, or to have ($F$-)\textit{capacity zero} if 
	\begin{align*}
	\mathrm{cap}_F(E, B)=0
	\end{align*}
	whenever $E \subset B' \subset B$. In this case, we write $\mathrm{cap}_F E=0$.
\end{definition}

According to Lemma \ref{sinpoint} (i), we immediately notice that every single point is of $F$-capacity zero if $\alpha^{\ast}(F) \geq 0$. Indeed, we are going to show that: to check whether a compact set $K$ is of capacity zero or not, it is enough to test with respect to one ball $B$ (Corollary \ref{capzero}). For this purpose, we require the following version of a capacitary estimate, called ``comparable lemma".

\begin{lemma}[Comparable lemma] \label{capest}
	If $K \subset B'=B_{7r/5}$ and $0<r\leq s\leq 2r$, then there exists a universal constant $c>0$ such that
		\begin{align*}
		\frac{1}{c} \, \mathrm{cap}_F (K, B_{2r}) \leq \mathrm{cap}_F(K, B_{2s}) \leq c \, \mathrm{cap}_F (K, B_{2r}).
		\end{align*}
\end{lemma}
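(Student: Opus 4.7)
My plan is to prove both inequalities after reducing them, via a Harnack chain, to a comparison at the single point $y_0^r = x_0 + \tfrac{3}{2}re_1$. Write $u_r := \hat{R}^1_K(B_{2r}(x_0))$ and $u_s := \hat{R}^1_K(B_{2s}(x_0))$, so that $\mathrm{cap}_F(K,B_{2r}) = u_r(y_0^r)$ and $\mathrm{cap}_F(K,B_{2s}) = u_s(y_0^s)$, where $y_0^s = x_0 + \tfrac{3}{2}se_1$. The segment from $y_0^r$ to $y_0^s$ lies on the $e_1$-axis, at distance at least $r/10$ from $K$ (its points satisfy $|x-x_0| \geq 3r/2 > 7r/5$) and at least $s/2 \geq r/2$ from $\partial B_{2s}$. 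Covering it by a universally bounded number of balls of radius $r/20$ contained in $B_{2s}\setminus K$, Harnack inequality for the non-negative $F$-harmonic function $u_s$ on $B_{2s}\setminus K$ yields $u_s(y_0^r) \asymp u_s(y_0^s)$ with universal constants; so both inequalities reduce to a comparison of $u_r(y_0^r)$ and $u_s(y_0^r)$.

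The first inequality follows from admissibility. Since $u_s$ is $F$-superharmonic in $B_{2s}$, non-negative, and satisfies $u_s \geq 1$ on $K$, the restriction $u_s|_{B_{2r}}$ belongs to $\Phi^1_K(B_{2r})$; hence $u_r \leq u_s$ in $B_{2r}$ by the definition of the balayage, and evaluation at $y_0^r$ combined with the Harnack chain gives $\mathrm{cap}_F(K,B_{2r}) \leq c\,\mathrm{cap}_F(K,B_{2s})$.

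For the reverse inequality, set $M := \max_{\partial B_{2r}} u_s$, which is finite and lies in $[0,1]$ since $\partial B_{2r} \subset B_{2s}\setminus K$, $u_s$ is continuous there, and the constant function $1$ is admissible for $B_{2s}$. Consider the auxiliary $\tilde{u} := (1-M)u_r + M$ on $\overline{B_{2r}}$. By (F2) and the identity $F(0)=0$ (apply (F2) to $M=0$), $\tilde{u}$ is $F$-harmonic on $B_{2r}\setminus K$: $F(D^2\tilde{u}) = (1-M)F(D^2 u_r) = 0$. On $\partial K$ one has $\tilde{u} = 1 \geq u_s$ and on $\partial B_{2r}$ one has $\tilde{u} = M \geq u_s$, so the comparison principle (Theorem \ref{cpsuperhar}) in $B_{2r}\setminus K$ gives $u_s \leq \tilde{u}$ there, and evaluation at $y_0^r$ yields
\[
u_s(y_0^r) \leq (1-M)u_r(y_0^r) + M \leq u_r(y_0^r) + M.
\]

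It then remains to establish $M \leq C\,u_r(y_0^r)$ for a universal $C$, and this is the main obstacle. My approach is to bracket $u_s$ by the capacity potential $\Psi_s := \hat{R}^1_{\overline{B_{7r/5}(x_0)}}(B_{2s}(x_0))$ of the reference ball: since $\Psi_s \geq 1$ on $K$, we get $u_s \leq m\,\Psi_s$ in the outer annulus $B_{2s}\setminus\overline{B_{7r/5}(x_0)}$, where $m := \max_{\partial B_{7r/5}(x_0)} u_s$. The explicit construction from the homogeneous solution $V$ carried out in the proof of Lemma \ref{capball} shows that $\max_{\partial B_{2r}}\Psi_s$ is bounded above by a universal constant $\kappa$ strictly less than $1$ (uniformly in $s \in [r,2r]$) and that $\Psi_s(y_0^r)$ is bounded below by a positive universal constant. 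Combining these bounds with Harnack inequality in the safe annular region $\{3r/2 \leq |x-x_0| \leq 7r/4\}$ (which lies at distance $\geq r/10$ from $K$ and $\geq r/4$ from $\partial B_{2s}$) and exploiting the strict gap $\kappa < 1$ through a short bootstrapping step will yield the missing proportionality $M \leq C\,u_r(y_0^r)$. The delicate point — and the reason the routine application of Harnack is not enough — is that a naive estimate gives $M \leq C'u_s(y_0^r)$ with $C' \geq 1$, which would only close the loop tautologically; the strict inequality $\kappa < 1$ coming from the ball-capacity barrier is what rescues the argument.
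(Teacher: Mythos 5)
Your reduction to a comparison at $y_0^r$, your proof of the first inequality via admissibility of $u_s|_{B_{2r}}$, and the estimate $u_s \leq (1-M)u_r + M$ in $B_{2r}\setminus K$ all coincide with the paper's argument (the paper reduces to $s\leq \frac{21}{20}r$ and iterates rather than using one long Harnack chain, but that is cosmetic). The gap is in the step you yourself flag as the main obstacle: the bound $M \leq C\,u_r(y_0^r)$ is asserted, not proved, and the sketch does not close as written. Your bracketing $u_s \leq m\,\Psi_s$ is anchored on the sphere $\partial B_{7r/5}$, with $m=\max_{\partial B_{7r/5}}u_s$; but $K$ may come arbitrarily close to $\partial B_{7r/5}$, so no universal Harnack estimate relates $\max_{\partial B_{7r/5}}u_s$ or $\max_{\partial B_{7r/5}}u_r$ to values at $y_0^r$. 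Feeding $m\leq(1-M)\max_{\partial B_{7r/5}}u_r+M$ into $M\leq\kappa m$ only yields $M\leq\frac{\kappa}{1-\kappa}\max_{\partial B_{7r/5}}u_r$, and $\max_{\partial B_{7r/5}}u_r$ stays bounded below by a positive universal constant while $u_r(y_0^r)$ is arbitrarily small (take $K$ a tiny ball adjacent to $\partial B_{7r/5}$), so this does not give $M\leq Cu_r(y_0^r)$. The bootstrap can be repaired by moving the bracketing sphere into the safe region: with $M_1:=\max_{\partial B_{3r/2}}u_s$ one gets $u_s\leq M_1\,\hat{R}^1_{\overline{B_{3r/2}}}(B_{2s})$ in $B_{2s}\setminus\overline{B_{3r/2}}$, hence $M\leq\kappa'M_1$ for a universal $\kappa'<1$, while $M_1\leq(1-M)\max_{\partial B_{3r/2}}u_r+M\leq(1-M)c_Hu_r(y_0^r)+M$ by Harnack on $\partial B_{3r/2}$; then $M(1-\kappa')\leq\kappa'c_Hu_r(y_0^r)$. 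But that correction, and the verification that $\kappa'<1$ uniformly in $s\in[r,2r]$, are exactly what your proposal leaves out.

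For comparison, the paper never bounds $M$ at all. It compares $u_s$ in $B_{2s}\setminus B_{3r/2}$ with the explicit radial barrier built from the homogeneous solution $V$ and deduces, via a numerical inequality between increments of $t\mapsto t^{-\alpha^{\ast}}$ on the two annuli, that $\max_{\partial B_{3r/2}}u_s\geq M+\mathrm{cap}_F(K,B_{2s})$. Evaluating $(1-M)u_r+M\geq u_s$ at the point $z\in\partial B_{3r/2}$ where $u_s$ attains this maximum, the $M$'s cancel and one obtains $u_r(z)\geq\mathrm{cap}_F(K,B_{2s})$ outright; Harnack on $\partial B_{3r/2}$ finishes. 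So the paper trades your self-improving inequality for a barrier computation, and as submitted your proof is incomplete at precisely its decisive step.
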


\begin{proof}
	We may assume $x_0=0$. We claim that for $0<r \leq s \leq \frac{21}{20}r$, we have
	\begin{align*}
	\frac{1}{c}\,\mathrm{cap}_F(K, B_{2r}) \leq \mathrm{cap}_F(K, B_{2s}) \leq c\, \mathrm{cap}_F (K, B_{2r}).
	\end{align*}
	Indeed, we may iterate this inequality finitely many times to conclude the desired inequality for $0<r\leq s\leq 2r$. Moreover, let $y_r=\frac{3}{2}re_1$, $y_s=\frac{3}{2}se_1$ and denote  $u_r:=\hat{R}_K^1(B_{2r})$, $u_s:=\hat{R}_K^1(B_{2s})$. By the definition of the capacity potential, it is immediate that $u_r \leq u_s$ in $B_{2r}$. In particular, we have
	\begin{align*}
	\mathrm{cap}_F(K, B_{2r})=u_r(y_r) \leq u_s(y_r).
	\end{align*}
	On the other hand, an application of Harnack inequality for $u_s$ (in a small neighborhood of $B_{3s/2} \setminus B_{10s/7}$) yields that there exists a constant $c>0$ which is independent of the choice of $r$ and $s$ such that
	\begin{align*}
	u_s(y_r) \leq cu_s(y_s)=c \, \mathrm{cap}_F(K, B_{2s}).
	\end{align*}
	Here note that $|y_r|=\frac{3}{2}r\geq \frac{10}{7}s>\frac{7}{5}s$ and $R_K^1(B_{2s})$ is $F$-harmonic in $B_{2s} \setminus B_{7s/5}$ and $B_{3s/2} \setminus B_{10s/7} \subset B_{2s} \setminus B_{7s/5}$. Therefore, it finishes the proof for the first inequality.
	
	Next, for the second inequality, we first assume that $\alpha^{\ast}(F) >0$ and the homogeneous solution is given by $V(x)=|x|^{-\alpha^{\ast}}$ (for computational simplicity) and let
	\begin{align*}
		M:=\max_{\partial B_{2r}}u_s \in [0,1).
	\end{align*}
	Then recalling Theorem \ref{cappoten}, the comparison principle yields that
	\begin{align}\label{com1}
	(1-M)u_r+M \geq u_s \quad \text{in $B_{2r} \setminus K$.}
	\end{align}
	Now choose $z \in \partial B_{3r/2}$ so that 
	\begin{align*}
	u_s(z)=\max_{\partial B_{3r/2}}u_s=:M_1.
	\end{align*}
	Then it can be easily checked that the function 
	\begin{align*}
	w(x):=M_1 \cdot\frac{|x|^{-\alpha^{\ast}}-(2s)^{-\alpha^{\ast}}}{(3r/2)^{-\alpha^{\ast}}-(2s)^{-\alpha^{\ast}}}
	\end{align*}
	is $F$-harmonic in $B_{2s} \setminus B_{3r/2}$ and by the comparison principle, $w \geq u_s$ in $B_{2s} \setminus B_{3r/2}$. (here again note that $\frac{7}{5}s <\frac{3}{2}r$.) In particular,
	\begin{align*}
	&M_1 \cdot\frac{(2r)^{-\alpha^{\ast}}-(2s)^{-\alpha^{\ast}}}{(3r/2)^{-\alpha^{\ast}}-(2s)^{-\alpha^{\ast}}} \geq M,\\
	&M_1 \cdot\frac{(3s/2)^{-\alpha^{\ast}}-(2s)^{-\alpha^{\ast}}}{(3r/2)^{-\alpha^{\ast}}-(2s)^{-\alpha^{\ast}}} \geq u_s\Big(\frac{3}{2}se_1\Big)=\mathrm{cap}_F(K, B_{2s}).	
	\end{align*}
	Since $(3r/2)^{-\alpha^{\ast}}-(2r)^{-\alpha^{\ast}} \geq (3s/2)^{-\alpha^{\ast}}-(2s)^{-\alpha^{\ast}}$ or equivalently,
	\begin{align*}
	(3r/2)^{-\alpha^{\ast}}-(2s)^{-\alpha^{\ast}} \geq [(3s/2)^{-\alpha^{\ast}}-(2s)^{-\alpha^{\ast}}]+[(2r)^{-\alpha^{\ast}}-(2s)^{-\alpha^{\ast}}],
	\end{align*}
	we obtain
	\begin{align} \label{com2}
	u_s(z)=M_1 \geq M+\mathrm{cap}_F(K, B_{2s}).
	\end{align}
	Moreover, by \eqref{com1} and \eqref{com2}, we have $u_r(z) \geq (1-M)u_r(z)\geq \mathrm{cap}_F(K, B_{2s})$ and then Harnack inequality leads to
	\begin{align*}
	\mathrm{cap}_F(K, B_{2s}) \leq c\,\mathrm{cap}_F(K, B_{2r}),
	\end{align*}
	for constant $c>0$ which is independent of $r$ and $s$. Finally, for the general homogeneous solution or the case of $\alpha^{\ast}(F)\leq0$, one can follow the idea of Lemma \ref{capball}.
\end{proof}

\begin{corollary}\label{capzero}
	Suppose that $\mathrm{cap}(K, B)=0$ for $K \subset B' \subset B$. Then 
	\begin{enumerate}[(i)]
		\item for any ball $B_1$ such that $K \subset B_1'$ and $B_1 \subset B'$, we have 
		\begin{align*}
		\mathrm{cap}(K, B_1)=0;
		\end{align*}
		
		\item for any ball $B_2$ such that $B_2' \supset B$, we have 
		\begin{align*}
		\mathrm{cap}(K, B_2)=0;
		\end{align*}
		
		\item $K$ is of $F$-capacity zero.
	\end{enumerate}
\end{corollary}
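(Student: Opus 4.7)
The plan is to establish (i) directly from the strong minimum principle for $F$-superharmonic functions together with a restriction argument, and then to bootstrap (ii) and (iii) by combining (i) with the Comparable Lemma, Lemma \ref{capest}.

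For (i), the hypothesis $\mathrm{cap}(K,B)=0$ says exactly that $\hat{R}_K^1(B)(y_0)=0$. Since $\hat{R}_K^1(B)$ is a nonnegative $F$-superharmonic function on the connected open ball $B$ (by Lemma \ref{brelot1} and the definition of balayage), I would invoke the strong minimum principle for viscosity $F$-supersolutions, a consequence of the weak Harnack inequality, to conclude $\hat{R}_K^1(B)\equiv 0$ in $B$. The remaining ingredient is a simple restriction argument: for every $v\in\Phi_K^1(B)$, the restriction $v|_{B_1}$ is $F$-superharmonic in $B_1\subset B$, nonnegative, and $\geq 1$ on $K\subset B_1'$, so $v|_{B_1}\in\Phi_K^1(B_1)$. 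Hence $R_K^1(B_1)\leq R_K^1(B)$ pointwise on $B_1$, and passing to lower semi-continuous regularizations gives $\hat{R}_K^1(B_1)\leq \hat{R}_K^1(B)\equiv 0$ on $B_1$. Evaluating at the reference point $y_0^{(1)}$ of $B_1$ yields $\mathrm{cap}(K,B_1)=0$.

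For (ii), write $B=B_{2r}(x_0)$ and fix $\tilde r\geq r$ large enough that $B_2\subset B_{7\tilde r/5}(x_0)$; this is possible because $B_2$ is a fixed ball while $\tilde r$ may be taken arbitrarily large. Since $K\subset B_{7r/5}(x_0)\subset B_{7s/5}(x_0)$ for every $s\geq r$, Lemma \ref{capest} can be iterated finitely many times along the chain of concentric balls $B_{2r}(x_0),\,B_{4r}(x_0),\ldots,B_{2\tilde r}(x_0)$ to conclude $\mathrm{cap}(K,B_{2\tilde r}(x_0))\leq c\,\mathrm{cap}(K,B)=0$. Applying part (i) with outer ball $\tilde B:=B_{2\tilde r}(x_0)$ and inner ball $B_2$ (the inclusions $K\subset B_2'$ and $B_2\subset \tilde B'$ hold by construction) then produces $\mathrm{cap}(K,B_2)=0$. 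Part (iii) is a verbatim repetition of this argument with any admissible ball $B_3$ in place of $B_2$.

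The one non-mechanical ingredient is the strong minimum principle used in (i): this is precisely what allows the pointwise information $\hat{R}_K^1(B)(y_0)=0$ at the single reference point to spread to the entire ball $B$, and in particular to the reference point of an arbitrary sub-ball. Once (i) is established, the proofs of (ii) and (iii) are purely geometric chain-of-balls arguments driven by Lemma \ref{capest}.
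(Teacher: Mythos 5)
Your proof is correct, but part (i) takes a genuinely different route from the paper's. The paper disposes of (i) by iterating the first inequality of the Comparable Lemma (Lemma \ref{capest}) finitely many times, of (ii) by iterating the second inequality, and declares (iii) an immediate consequence. You instead prove (i) by propagating the single-point information $\hat{R}_K^1(B)(y_0)=0$ to all of $B$ via the strong minimum principle and then using domain monotonicity of the reduced function ($v\in\Phi_K^1(B)\Rightarrow v|_{B_1}\in\Phi_K^1(B_1)$, hence $\hat{R}_K^1(B_1)\le \hat{R}_K^1(B)$ on $B_1$), evaluating at the reference point of $B_1$, which lies off $\partial K$ so that $R$ and $\hat R$ agree there. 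This is a legitimate and in one respect more robust argument: Lemma \ref{capest} as stated only compares capacities relative to \emph{concentric} dilates $B_{2r}(x_0)$ and $B_{2s}(x_0)$, whereas the ball $B_1$ in part (i) need not share the center of $B$, so the paper's one-line proof implicitly requires a re-centering step (e.g.\ a chain of overlapping balls plus Harnack) that your minimum-principle argument sidesteps entirely. The strong minimum principle you invoke is already part of the paper's toolbox (it is used in exactly this way in Lemma \ref{polar} and in the removability lemma), so no new machinery is needed. Your treatment of (ii) — enlarging $B$ concentrically via Lemma \ref{capest} until $B_2$ sits inside the enlarged $\tilde B'$, then applying your (i) — matches the paper's intent, and your observation that this same two-step argument handles an arbitrary admissible ball gives (iii) cleanly.
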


\begin{proof}
	\begin{enumerate}[(i)]
		\item Apply the first inequality of Lemma \ref{capest} finitely many times.
		
		\item Apply the second inequality of Lemma \ref{capest} finitely many times.
		
		\item It is an immediate consequence of (i) and (ii).
	\end{enumerate}
	
\end{proof}

%
%

Now we shortly illustrate the potential theoretic meaning of capacity zero sets, at least for convex operators $F$. In the end, $F$-capacity zero sets are `negligible' in view of the fully nonlinear operator $F$; i.e. $F$-capacity really measures the size of given sets in a suitable way to interpret the corresponding PDE.

\begin{definition}[Polar sets]
	A compact set $K$ is called \textit{$F$-polar}, or simply \textit{polar}, if there exist an open ball $B_{2r}$ with $K \subset B_{7r/5}$, and $F$-superharmonic function $u$ in $B_{2r}$ such that $u|_{K}=\infty$.
\end{definition}

\begin{lemma}\label{polar}
	Suppose that $K$ is a compact set in $B_{7r/5}$ and $F$ is convex. Then the followings are equivalent:
	\begin{enumerate}[(i)]
		\item $K$ is polar.
		\item $\mathrm{cap}_FK=0$.
	\end{enumerate}
\end{lemma}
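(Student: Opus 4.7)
The plan is to treat the two directions separately; convexity of $F$ is used only for (ii)$\Rightarrow$(i).

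\emph{For (ii)$\Rightarrow$(i).} Since $\mathrm{cap}_F(K,B) = R_K^1(B)(y_0) = 0$, for each integer $k \geq 1$ I pick $u_k \in \Phi_K^1(B)$ with $u_k(y_0) < 2^{-k}$, and set $u := \sum_{k \geq 1} u_k$. Convexity of $F$ together with (F2)---precisely the mechanism used in the proof of Lemma~\ref{brelot2}(iv) via \cite[Theorem~5.8]{CC95}---shows that the sum of two $F$-superharmonic functions is $F$-superharmonic, so every partial sum $S_n := \sum_{k=1}^n u_k$ is $F$-superharmonic on $B$. Since $S_n \uparrow u$ pointwise and $u(y_0) \leq \sum 2^{-k} = 1 < \infty$, Lemma~\ref{easy}(iii) yields that $u$ is $F$-superharmonic on $B$, while $u_k \geq 1$ on $K$ for each $k$ forces $u \equiv +\infty$ on $K$. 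Hence $K$ is polar with witness $u$.

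\emph{For (i)$\Rightarrow$(ii).} Let $u$ be an $F$-superharmonic witness on $B_{2R}(x_0)$ with $u|_K = +\infty$ and $K \subset B_{7R/5}(x_0)$. The crux is the density claim that $\{u < +\infty\}$ is dense in $B_{2R}(x_0)$. Granting this, I pick a concentric subball $B_1 = B_{2R_1}(x_0)$ with $\overline{B_1} \subset B_{2R}(x_0)$ and $K \subset B_1'$; lower semi-continuity of $u$ on the compact $\overline{B_1}$, together with the existence of a finite value on $B_1$, produces a finite minimum $m$, so $\tilde u := u - m \geq 0$ is $F$-superharmonic on $B_1$ with $\tilde u \equiv +\infty$ on $K$. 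For every $\varepsilon > 0$, (F2) gives $\varepsilon \tilde u \in \Phi_K^1(B_1)$, whence $R_K^1(B_1) \leq \varepsilon \tilde u$ pointwise; sending $\varepsilon \downarrow 0$ at any $y$ with $\tilde u(y) < \infty$ yields $R_K^1(B_1)(y) = 0$. By Lemma~\ref{balayage}, $\hat{R}_K^1(B_1)$ coincides with $R_K^1(B_1)$ on $B_1 \setminus \overline{K}$ and is $F$-harmonic---hence continuous---there. As a non-negative continuous function vanishing on the dense subset $\{\tilde u < \infty\} \cap (B_1 \setminus \overline{K})$ of the open component of $B_1 \setminus \overline{K}$ containing the distinguished point $y_0^{(1)}$, it vanishes throughout that component. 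Thus $\mathrm{cap}_F(K, B_1) = \hat{R}_K^1(B_1)(y_0^{(1)}) = 0$, and Corollary~\ref{capzero} gives $\mathrm{cap}_F K = 0$.

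\emph{Main obstacle: density of $\{u < +\infty\}$.} If $\{u = +\infty\}$ contained an open ball $B_r(z_0) \subset B_{2R}(x_0)$, I would argue by propagation: for $r' < r < R$ with $\overline{B_R(z_0)} \subset B_{2R}(x_0)$, compare $u - m_R$ (with $m_R := \inf_{\partial B_R(z_0)} u$, finite after a shrinking/shift if necessary) against the $F$-harmonic function $h_N := N \hat{R}_{\overline{B_{r'}(z_0)}}^1(B_R(z_0))$, whose boundary values are $N$ on $\partial B_{r'}(z_0)$ (scaled by (F2) and Theorem~\ref{cappoten}) and $0$ on $\partial B_R(z_0)$. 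Since $u - m_R \geq 0$ on $\partial B_R(z_0)$ and $u - m_R = +\infty \geq N$ on $\partial B_{r'}(z_0)$, Theorem~\ref{cpsuperhar} forces $h_N \leq u - m_R$ in the annulus, and Harnack inequality guarantees $h_N(z) = N \hat{R}^1(z) \to +\infty$ pointwise there as $N \to \infty$, forcing $u \equiv +\infty$ on $B_R(z_0) \setminus \overline{B_r(z_0)}$. A recentering argument on small auxiliary balls inside the newly-inflated region handles the residual sphere $\partial B_r(z_0)$, and iterating this doubling via connectedness of $B_{2R}(x_0)$ propagates $u \equiv +\infty$ to the entire ball, contradicting the definition of an $F$-superharmonic function. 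This bookkeeping---together with the borderline case $m_R = +\infty$, where $u = +\infty$ on an entire sphere and a similar recentering takes over---is the main technical hurdle.
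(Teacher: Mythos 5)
Your direction (ii)$\Rightarrow$(i) is essentially the paper's argument verbatim: choose $u_k\in\Phi_K^1(B)$ with $u_k(y_0)<2^{-k}$, use convexity of $F$ plus (F2) to make the partial sums $F$-superharmonic, and invoke Lemma~\ref{easy}(iii) together with the finiteness of the sum at $y_0$. No issues there.

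Your direction (i)$\Rightarrow$(ii) reaches the right conclusion but by a genuinely different and much longer route. The paper, after normalizing the polar witness $u$ to be non-negative on a slightly smaller ball, observes that $\hat{R}_K^1\le\varepsilon u$ for every $\varepsilon>0$, hence $\hat{R}_K^1$ vanishes at the single point where $u$ is finite, and then invokes the \emph{strong minimum principle}: a non-negative $F$-supersolution of a uniformly elliptic equation vanishing at an interior point of a connected open set vanishes identically (this is a consequence of the Harnack inequality already in use throughout the paper). That immediately gives $\hat{R}_K^1(B)(y_0)=0$ with no need to know anything about the size of $\{u<\infty\}$. You instead propagate the zero set of $\hat{R}_K^1$ by proving that $\{u<\infty\}$ is dense and then using continuity of the $F$-harmonic function $\hat{R}_K^1$ off $\overline K$. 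This works in principle, but it forces you to establish the density claim, and that is precisely the part you leave as a sketch: the comparison against $N\hat{R}^1_{\overline{B_{r'}}}(B_R)$ in an annulus is fine, but the ``recentering'' to absorb the residual sphere and the iteration/connectedness bookkeeping are exactly where the work lies, and you explicitly defer them. The cleaner observation is that your propagation mechanism is itself a hand-made strong minimum principle; replacing the whole density discussion by a direct appeal to the strong minimum principle (Harnack) for the non-negative supersolution $\hat{R}_K^1$, as the paper does, removes the only incomplete step in your write-up. One further small point: after obtaining $\mathrm{cap}_F(K,B_1)=0$ for one ball you correctly invoke Corollary~\ref{capzero} to upgrade this to $\mathrm{cap}_FK=0$; the paper is terser here but needs the same corollary.
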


\begin{proof}
	\begin{enumerate}[]
		\item (i) $\implies$ (ii): Since $K$ is polar, let $u$ be an $F$-superharmonic function in $B_{2r}$ such that $u|_K=\infty$. Recalling the definition of $F$-superharmonic functions, there exists a point $x_0 \in B_{2r} \setminus K$ such that $u(x_0) < \infty$. Since $u$ is lower semi-continuous and $u$ cannot attain the value $-\infty$, we may assume $\inf_{B_{2r}}u>-\infty$ by choosing a little smaller ball $B_{2r'}$ instead of $B_{2r}$. Then by adding a positive constant if necessary, we further assume $\inf_{B_{2r}}u \geq 0$, i.e. $u$ is non-negative in $B_{2r}$. Note that we still have $u$ is $F$-superharmonic in $B_{2r}$ and $u|_K=\infty$. Therefore, for any $\varepsilon>0$, we have $\varepsilon u \in \Phi_K^1(B_{2r})$ and so 
		\begin{align*}
		\hat{R}_K^1(B_{2r}) \leq \varepsilon u.
		\end{align*}
		Letting $\varepsilon \to 0$ and taking $x=x_0$, we notice that $\hat{R}_K^1(B_{2r})(x_0)=0$. Finally, the strong minimum principle implies that $\mathrm{cap}_FK=0$.
		
		\item (ii) $\implies$ (i): Let $y_0=x_0+\frac{3}{2}re_1$. Then by the definition of the capacity and the capacity potential, we have $\hat{R}_K^1(B_{2r}(x_0))(y_0)=0$. Thus, there exists a sequence of $F$-superharmonic functions $\{u_j\}_{j=1}^{\infty}$ in $B_{2r}$ such that
		\begin{align*}
		\text{$u_j \geq 0$ in $B_{2r}$, $u_j \geq 1$ on $K$ and $u_j(y_0) <1/2^j$}.
		\end{align*}
		Define $v_k:=\sum_{j=1}^ku_j$ which is lower semi-continuous and is finite in a dense subset of $\Omega$. Furthermore, since $F$ is convex, we have $F(D^2v_k) \leq 0$, and so $v_k$ is $F$-superharmonic. Since $\{v_k\}_{k}$ is an increasing sequence of $F$-superharmonic functions, Lemma \ref{easy} (iii) gives that the limit function $v=v_k$ is either $F$-superharmonic or $v \equiv \infty$. The second possibility is excluded because $0 \leq v(y_0) \leq 1$. Therefore, $v$ is $F$-superharmonic in $B_{2r}$ and $v|_{K}=\infty$, which implies that 
		$K$ is polar.
	\end{enumerate}
\end{proof}

\begin{definition} [Removable sets] 
	A compact set $K (\subset B_{7r/5})$ is called \textit{$F$-removable}, or simply \textit{removable}, if for each function $u$ that is $F$-superharmonic on $B_{2r} \setminus K$ and is bounded below in a neighborhood of $K$, there exists an extension $U$ of $u$ which is $F$-superharmonic in $B_{2r}$ and $U=u$ in $B_{2r} \setminus K$. 
\end{definition}

\begin{lemma}
	Suppose that $K$ is a compact set of capacity zero and $F$ is convex. Then $K$ is removable.
\end{lemma}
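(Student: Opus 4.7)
The plan is to define the canonical lower semicontinuous extension $U(x):=\liminf_{B_{2r}\setminus K\ni y\to x}u(y)$ for $x\in B_{2r}$, which is lower semicontinuous on $B_{2r}$ by construction, coincides with $u$ on $B_{2r}\setminus K$ thanks to Lemma \ref{lsc}, and is not identically $+\infty$ on any component because $u$ is bounded below near $K$ and finite on a dense subset of $B_{2r}\setminus K$. The work then reduces to checking that $U$ satisfies the comparison property defining an $F$-superharmonic function on each open $D\subset\subset B_{2r}$.

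Fix such a $D$ and let $h\in C(\overline D)$ be $F$-harmonic in $D$ with $h\le U$ on $\partial D$. The central tool is Lemma \ref{polar}: convexity of $F$ together with $\mathrm{cap}_F K=0$ furnishes an $F$-superharmonic $v\ge 0$ on $B_{2r}$ with $v|_K=+\infty$. By convexity again (see the proof of Lemma \ref{brelot2}(iv)), $u+\varepsilon v$ is $F$-superharmonic on $D\setminus K$ for every $\varepsilon>0$. Applying Theorem \ref{cpsuperhar} to $h$ (an $F$-subharmonic function) and $u+\varepsilon v$ on the open set $D\setminus K$: at boundary points $x\in\partial D\setminus K$ one has $\limsup h=h(x)\le U(x)=u(x)\le(u+\varepsilon v)(x)\le\liminf(u+\varepsilon v)$ using $v\ge 0$; at any boundary point $x\in K\cap\overline D$ approached from within $D\setminus K$, the lower semicontinuity of $v$ together with $v(x)=+\infty$ forces $v(y)\to+\infty$, while $u$ stays bounded below, so $\liminf(u+\varepsilon v)=+\infty\ge h(x)$. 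Hence $h\le u+\varepsilon v$ on $D\setminus K$, and sending $\varepsilon\to 0^{+}$ yields $h(y)\le u(y)$ at every $y\in D\setminus K$ with $v(y)<\infty$.

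The principal obstacle is that the single $v$ delivered by Lemma \ref{polar} need not be finite at every point of $B_{2r}\setminus K$, which would block the $\varepsilon\to 0$ limit at stray points. To overcome this, I build for each $y^{\ast}\in D\setminus K$ a tailored polar function $v^{y^{\ast}}\ge 0$ on an enclosing ball, with $v^{y^{\ast}}|_K=+\infty$ and $v^{y^{\ast}}(y^{\ast})<\infty$. Indeed, Corollary \ref{capzero} gives $\mathrm{cap}_F(K,B)=0$ for every admissible ball $B\supset\overline D$, and choosing $B$ so that its standard base point lies in the same component of $B\setminus K$ as $y^{\ast}$, Harnack's inequality applied to the non-negative $F$-harmonic function $R_K^1(B)$ forces $R_K^1(B)(y^{\ast})=0$; selecting $w_j\in\Phi_K^1(B)$ with $w_j(y^{\ast})<2^{-j}$ and summing as in the proof of Lemma \ref{polar}(ii)$\Rightarrow$(i) produces the desired $v^{y^{\ast}}$. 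Rerunning the previous comparison with $v^{y^{\ast}}$ in place of $v$ and evaluating at $y^{\ast}$ then gives $h(y^{\ast})\le u(y^{\ast})$, so $h\le u$ on all of $D\setminus K$. Finally, for $x\in D\cap K$, any sequence $y_n\to x$ with $y_n\in D\setminus K$ satisfies $h(y_n)\le u(y_n)$, hence $h(x)=\lim h(y_n)\le\liminf u(y_n)$; taking the infimum over all such sequences recovers $h(x)\le U(x)$. This establishes the comparison property of $U$, so $U$ is the desired $F$-superharmonic extension of $u$ to $B_{2r}$ and $K$ is removable.
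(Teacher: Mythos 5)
Your argument is correct, and it runs on the same two engines as the paper's proof: the point-tailored polar functions $v_{z_0}\ge 0$ with $v_{z_0}|_K=\infty$ and $v_{z_0}(z_0)<\infty$ (built by rerunning the (ii)$\Rightarrow$(i) construction of Lemma \ref{polar} at the chosen point, which is legitimate because the capacity potential of $K$ vanishes off $K$), and the convexity of $F$, which makes $u+\varepsilon v_{z_0}$ an $F$-supersolution where both summands are. Where you genuinely diverge is in how the superharmonicity of the extension $U$ is certified. The paper promotes $U+\varepsilon v_{z_0}$ to a globally $F$-superharmonic function on $B_{2r}$ by passing through the viscosity characterization (Theorem \ref{equiv}): at a point of $K$ the function equals $+\infty$, so no $C^2$ test function can touch it from below and the supersolution condition is vacuous; it then recovers $U$ as the lower semicontinuous regularization of $\inf_{\varepsilon,z_0}\{U+\varepsilon v_{z_0}\}$ via Lemma \ref{balayagesuper}. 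You instead never leave the comparison-principle definition: you test $U$ against an $F$-harmonic competitor $h$ on $D\subset\subset B_{2r}$, apply Theorem \ref{cpsuperhar} to $h$ and $u+\varepsilon v^{y^{\ast}}$ on the punctured set $D\setminus K$ (the boundary analysis at $\partial D\setminus K$ and at $K$ is handled correctly), send $\varepsilon\to 0$ at the base point $y^{\ast}$ of the tailored polar function, and finish on $D\cap K$ by the liminf definition of $U$. Your route trades the paper's reliance on Theorem \ref{equiv} and Lemma \ref{balayagesuper} for extra boundary bookkeeping on $\partial(D\setminus K)$; both share the same implicit (and in context harmless) use of the strong minimum principle to propagate $R_K^1\equiv 0$ from $y_0$ to the rest of $B_{2r}\setminus K$, so neither is more complete than the other on that point.
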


\begin{proof}
	Let $u$ be an $F$-superharmonic function in $B_{2r} \setminus K$ and is bounded below in a neighborhood of $K$. Since $K$ is of capacity zero, we have $\hat{R}^1_K(B_{2r}) (y_0)=0$ and so $\hat{R}_K^1(B_{2r}) \equiv 0$ by the strong minimum principle. In particular, $R_K^1(B_{2r}) \equiv 0$ in $B_{2r} \setminus K$. Now, for any $z_0 \in B_{2r} \setminus K$, following the proof of [(ii) $\implies$ (i)] part in Lemma \ref{polar}, there exists a non-negative $F$-superharmonic function $v_{z_0}$ in $B_{2r}$ such that $v_{z_0}|_K=\infty$ and $v_{z_0}(z_0) <\infty$. 
	
	Now we consider a canonical lower semi-continuous extension $U$ of $u$ across $K$, which is defined by
	\begin{align*}
	U(x)=		
	\left\{ \begin{array}{ll} 
	\liminf_{y \to x, y \not\in K}u(y) & \textrm{if $x \not \in \mathrm{int}K$},\\
	\infty & \textrm{if $x \in \mathrm{int}K$}.
	\end{array} \right.
	\end{align*}
	Then $U$ is the lower semi-continuous regularization of the function $v$, where
	\begin{align*}
	v=
	\left\{ \begin{array}{ll} 
	u & \textrm{in $B_{2r} \setminus K$},\\
	\infty & \textrm{on $K$}.
	\end{array} \right.
	\end{align*}
	See \cite{HL14} for details. Moreover, by Lemma \ref{lsc} and Lemma \ref{equiv}, we notice that $U=u$ in $B_{2r} \setminus K$ and so $U$ is $F$-superharmonic in $B_{2r} \setminus K$.
	
	Then we claim that $U+\varepsilon v_{z_0}$ is $F$-superharmonic in $B_{2r}$, for any $\varepsilon>0$ and $z_0 \in B_{2r} \setminus K$. Indeed, the convexity of $F$ immediately guarantees that $U+\varepsilon v_{z_0}$ is $F$-superharmonic in $B_{2r} \setminus K$. On the other hand, since $U+\varepsilon v_{z_0}|_K=\infty$, we cannot choose any test functions for $U+\varepsilon v_{z_0}$ at points in $K$. In other words, for any $\varphi \in C^2(\Omega)$, $U+\varepsilon v_{z_0}-\varphi$ cannot have a local minimum at $x_0 \in K$. Thus, recalling the equivalence of $F$-supersolution and $F$-superharmonic function (Theorem \ref{equiv}), we conclude that $U+\varepsilon v_{z_0}$ is $F$-superharmonic in $B_{2r}$.
	
	Now let $\mathcal{F}=\{U+\varepsilon v_{z_0}\}_{\varepsilon>0, z_0 \in B_{2r} \setminus K}$ be a family of $F$-superharmonic functions in $B_{2r}$. Since $u$ is bounded below in a neighborhood of $K$ and $v_{z_0}$ is non-negative, any element in $\mathcal{F}$ is locally uniformly bounded below. Thus, applying Lemma \ref{balayagesuper}, we have
	\begin{align*}
	s(x)=\lim_{r \to 0}\inf_{B_r(x)} (\inf \mathcal{F})
	\end{align*}
	is $F$-superharmonic in $B_{2r}$. On the other hand, it is easy to check that
	\begin{align*}
	\inf \mathcal{F}=
	\left\{ \begin{array}{ll} 
	u & \textrm{in $B_{2r} \setminus K$},\\
	\infty & \textrm{on $K$}.
	\end{array} \right.
	\end{align*}
	Therefore, we conclude that $s=U$ and $U$ is a desired extension of $u$.
\end{proof}

\begin{remark}
	Considering the dual operator $\widetilde{F}$, one can obtain analogous definitions and corresponding results when the operator is concave.
	
	For similar results concerning polar sets and removable sets, see \cite{HKM93} for $p$-Laplacian operators, \cite{Lab02b} for Pucci extremal operators, and \cite{Lab02a} for $k$-Hessian operators. See also \cite{AGV13, HL13, HL14} for the analysis of polar sets and removable sets in view of Riesz capacity or Hausdorff measure.
\end{remark}

\subsection{Another characterization of a regular point}
The definitions of a reduced function and a balayage depend on the choice of an operator $F$. In this subsection, we need to distinguish an operator and its dual operator, so we will specify the dependence by denoting
	$\hat{R}_{K}^{1, F}(\Omega)$ or $\hat{R}_K^{1, \widetilde{F}}(\Omega)$.
We now provide a key lemma for our first main theorem, the sufficiency of the Wiener criterion:
\begin{lemma} \label{keylem}
	A boundary point $x_0 \in \partial \Omega$ is regular if
	\begin{align*}
	\hat{R}_{\overline{B}\setminus \Omega}^{1, \widetilde{F}}(2B)(x_0)=1=\hat{R}_{\overline{B}\setminus \Omega}^{1, {F}}(2B)(x_0)
	\end{align*}
	whenever $B$ is a ball centered at $x_0$.
\end{lemma}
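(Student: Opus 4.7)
The idea is, for every $\varepsilon > 0$, to produce an explicit competitor $\psi \in \mathcal{U}_f(\Omega)$ with $\lim_{y \to x_0} \psi(y) = \varepsilon$; a duality argument between $F$ and $\widetilde F$ then furnishes a matching lower bound, forcing both $\overline{H}_f(y)$ and $\underline{H}_f(y)$ to tend to $f(x_0)$. Given $f \in C(\partial\Omega)$, I would first subtract $f(x_0)$ to assume $f(x_0) = 0$, set $M := \sup_{\partial\Omega}|f|$, pick $\delta > 0$ with $|f| < \varepsilon$ on $\partial\Omega \cap B_\delta(x_0)$, and take $B = B_{\delta/2}(x_0)$ so that $2B \subset B_\delta(x_0)$. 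Writing $K := \overline{B} \setminus \Omega$, $u_F := \hat{R}_K^{1,F}(2B)$ and $u_{\widetilde F} := \hat{R}_K^{1,\widetilde F}(2B)$, the hypothesis provides $u_F(x_0) = u_{\widetilde F}(x_0) = 1$, while Theorem~\ref{cappoten} together with the regularity of the ball $2B$ (Theorem~\ref{cone}) ensures both balayages vanish on $\partial(2B)$.

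The key step is to observe that on $2B \setminus K$, Lemma~\ref{balayage} makes $u_{\widetilde F}$ an $\widetilde F$-harmonic function, and the viscosity definition---test-function inequalities flip under negation, combined with $\widetilde F(M) = -F(-M)$---implies $-u_{\widetilde F}$ is $F$-harmonic there. Using (F2) and invariance under additive constants, the function
\begin{align*}
\psi_{\mathrm{in}} := \varepsilon + 2M\bigl(1 - u_{\widetilde F}\bigr)
\end{align*}
is $F$-harmonic on $\Omega \cap 2B \subset 2B \setminus K$, takes values in $[\varepsilon, 2M + \varepsilon]$, and approaches $2M + \varepsilon$ on $\partial(2B)$. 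Extending by the constant $2M + \varepsilon$ on $\Omega \setminus 2B$ and using $u_{\widetilde F} \to 0$ on $\partial(2B)$ to guarantee continuity across $\partial(2B) \cap \Omega$, the pasting lemma (Lemma~\ref{pasting}) produces a function $\psi$ that is $F$-superharmonic on all of $\Omega$. To verify $\psi \in \mathcal{U}_f$, note that on $\partial\Omega \cap 2B$ the bound $|f| < \varepsilon$ together with $\psi \geq \varepsilon$ yields $\liminf \psi \geq f$, while on $\partial\Omega \setminus 2B$ the constant extension $\psi = 2M + \varepsilon > M \geq f$ suffices. Hence $\overline{H}_f \leq \psi$ in $\Omega$, and the lower semicontinuity of $u_{\widetilde F}$, combined with $u_{\widetilde F} \leq 1$ and $u_{\widetilde F}(x_0) = 1$, forces $u_{\widetilde F}(y) \to 1$ as $y \to x_0$, so $\limsup_{y \to x_0} \overline{H}_f(y) \leq \lim \psi = \varepsilon$.

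Applying the same construction with $F$ and $\widetilde F$ swapped and with boundary data $-f$ (now invoking the other hypothesis $u_F(x_0) = 1$ in place of $u_{\widetilde F}(x_0) = 1$) yields $\limsup_{y \to x_0}\overline{H}_{-f}^{\widetilde F}(y) \leq \varepsilon$. The duality identity $\underline{H}_f^F = -\overline{H}_{-f}^{\widetilde F}$, which follows at once from the bijection $v \leftrightarrow -v$ between $\mathcal{L}_f^F$ and $\mathcal{U}_{-f}^{\widetilde F}$, then forces $\liminf_{y \to x_0}\underline{H}_f^F(y) \geq -\varepsilon$. Since $\underline{H}_f \leq \overline{H}_f$ always, letting $\varepsilon \to 0^+$ gives $\lim_{\Omega \ni y \to x_0}\overline{H}_f(y) = \lim_{\Omega \ni y \to x_0}\underline{H}_f(y) = 0 = f(x_0)$, establishing regularity. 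The main subtlety is justifying that $-u_{\widetilde F}$ is $F$-harmonic on $2B \setminus K$ via the viscosity test-function flip, and ensuring continuity at the gluing sphere $\partial(2B) \cap \Omega$ so that the pasting lemma applies---the latter relying on regularity of $2B$ for both $F$ and $\widetilde F$; everything else is routine bookkeeping.
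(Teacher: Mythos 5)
Your proposal is correct and follows essentially the same route as the paper: the competitor $\varepsilon+2M(1-\hat{R}^{1,\widetilde F}_{\overline B\setminus\Omega}(2B))$ pasted with a constant outside $2B$ is exactly the paper's upper-class function $u$ (up to the normalization $M\le 1$), and the continuity across $\partial(2B)$, the pasting lemma, and the lower semicontinuity of the balayage at $x_0$ are used identically. The only cosmetic difference is that you obtain the bound on $\underline{H}_f^F$ via the duality $\underline{H}_f^F=-\overline{H}_{-f}^{\widetilde F}$, whereas the paper writes down the corresponding lower-class competitor $-1-\varepsilon+\hat{R}^{1,F}_{\overline B\setminus\Omega}(2B)$ directly; these are the same computation.
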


\begin{proof}
	For $f \in C(\partial \Omega)$, consider the upper Perron solution $\overline{H}_f=\overline{H}_f(\Omega)$. We may assume $f(x_0)=0$ and $\max_{\partial \Omega}|f| \leq 1$. For $\varepsilon>0$, we can choose a ball $B$ with center $x_0$ such that $\partial (2B) \cap \Omega \neq\varnothing$ and $|f|<\varepsilon$ in $2B \cap \partial \Omega$. Then we define
	\begin{align*}
	u=		
	\left\{ \begin{array}{ll} 
	1+\varepsilon-\hat{R}_{\overline{B} \setminus \Omega}^{1, \widetilde{F}}(2B) & \textrm{in $\Omega \cap 2B$},\\
	1+\varepsilon & \textrm{in $\Omega \setminus 2B$}.
	\end{array} \right.
	\end{align*}
	Since $\hat{R}_{\overline{B}\setminus \Omega}^{1, \widetilde{F}}(2B)$ is a $\widetilde{F}$-solution in $\Omega \cap 2B$, $1+\varepsilon-\hat{R}_{\overline{B}\setminus \Omega}^{1, \widetilde{F}}(2B)$ is $F$-harmonic in $\Omega \cap 2B$. On the other hand, by Theorem \ref{cappoten}, $\hat{R}_{\overline{B}\setminus \Omega}^{1, \widetilde{F}}(2B)$ can be considered as the upper Perron solution for the operator $\widetilde{F}$. Then since a ball is regular, we have
	\begin{align*}
	\lim_{y \to x}\hat{R}_{\overline{B}\setminus \Omega}^{1, \widetilde{F}}(2B)(y)=0 \quad \text{for all $x \in \partial (2B)$}.
	\end{align*}
	Thus, $u$ is continuous in $\Omega$ and by the pasting lemma, $u$ is $F$-superharmonic in $\Omega$. Moreover, it can be easily checked that 
	\begin{align*}
	\liminf_{y \to x}u(y) \geq f(x) \quad \text{for any $x\in \partial \Omega$.}
	\end{align*}
	Therefore, $u \in \mathcal{U}_f$ and so $\overline{H}_f \leq u$. In particular,
	\begin{align*}
	\limsup_{\Omega \ni y \to x_0}\overline{H}_f(y) \leq \limsup_{\Omega \ni y \to x_0}u(y)=1+\varepsilon-\liminf_{\Omega \ni y \to x_0}\hat{R}_{\overline{B}\setminus \Omega}^{1, \widetilde{F}}(2B)(y) \leq 1+\varepsilon-\hat{R}_{\overline{B}\setminus \Omega}^{1, \widetilde{F}}(2B)(x_0)=\varepsilon.
	\end{align*}
	For the converse inequality, we define
	\begin{align*}
	v=		
	\left\{ \begin{array}{ll} 
	-1-\varepsilon+\hat{R}_{\overline{B} \setminus \Omega}^{1, {F}}(2B) & \textrm{in $\Omega \cap 2B$},\\
	-1-\varepsilon & \textrm{in $\Omega \setminus 2B$}.
	\end{array} \right.
	\end{align*}
	Then by a similar argument, $v \in \mathcal{L}_f$ and so, 
	\begin{align*}
	\liminf_{\Omega \ni y \to x_0}\underline{H}_f(y) \geq -\varepsilon.
	\end{align*}
	Consequently, since $\varepsilon>0$ is arbitrary, we conclude that
	\begin{align*}
	\lim_{\Omega \ni y \to x_0}\overline{H}_f(y)=\lim_{\Omega \ni y \to x_0}\underline{H}_f(y)=0=f(x_0),
	\end{align*}
	i.e. $x_0$ is regular.
\end{proof}


Next, we exhibit a converse direction of the above lemma: i.e. a characterization of an irregular boundary point. We expect that this lemma may be employed to prove the necessity of the Wiener criterion for the general case.
\begin{lemma}[Characterization of an irregular boundary point] \label{irrlem}
	If there exists a constant $\rho>0$ such that the capacity potential $u=u_{\rho}$ of ${\overline{B_{\rho}(x_0)} \setminus \Omega}$ with respect to $B_{2\rho}(x_0)$ satisfies the inequality
	\begin{align*}
	u(x_0)=\hat{R}_{\overline{B_{\rho}(x_0)} \setminus \Omega}^1(B_{2\rho}(x_0))<1,
	\end{align*}
	then the boundary point $x_0 \in \partial \Omega$ is irregular.
\end{lemma}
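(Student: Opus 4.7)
My plan is to argue by contradiction: assume $x_0$ is regular with respect to $\Omega$ and deduce $u(x_0) = 1$, contradicting the hypothesis. Set $K := \overline{B_{\rho}(x_0)} \setminus \Omega$, a compact subset of $B_{2\rho}(x_0)$, and introduce the auxiliary domain $\widetilde{\Omega} := B_{2\rho}(x_0) \setminus K$. Although $x_0 \in K$, the points of $\Omega \cap B_{2\rho}(x_0) \subset \widetilde{\Omega}$ accumulate at $x_0$, so $x_0 \in \partial \widetilde{\Omega}$. Since $K \subset \overline{B_\rho(x_0)}$, the sets $\partial K$ and $\partial B_{2\rho}(x_0)$ are disjoint closed subsets of $\partial \widetilde{\Omega}$, and the boundary function $f := 1$ on $\partial K$, $f := 0$ on $\partial B_{2\rho}(x_0)$ is continuous on $\partial \widetilde{\Omega}$ with $f(x_0) = 1$.

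The first substantive step will be to transfer the regularity of $x_0$ from $\Omega$ to $\widetilde{\Omega}$. For any ball $B' = B_r(x_0)$ with $0 < r < \rho$, the identity $B' \cap K = B' \setminus \Omega$ yields the crucial set equality $B' \cap \widetilde{\Omega} = B' \cap \Omega$. Applying Lemma \ref{local} to $\Omega$ with the ball $B'$, regularity at $x_0$ with respect to $\Omega$ passes to regularity with respect to $B' \cap \Omega = B' \cap \widetilde{\Omega}$, and a second application of Lemma \ref{local} to $\widetilde{\Omega}$ with the same ball promotes this to regularity of $x_0$ with respect to $\widetilde{\Omega}$.

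Next I identify $u$ with a Perron solution on $\widetilde{\Omega}$. By Theorem \ref{cappoten}, $u = \hat{R}_K^1(B_{2\rho}(x_0)) = \overline{H}_f(\widetilde{\Omega})$ on $\widetilde{\Omega}$. Since $x_0$ is regular with respect to $\widetilde{\Omega}$ and $f$ is continuous with $f(x_0) = 1$, the definition of regular point gives
\begin{align*}
\lim_{\widetilde{\Omega} \ni y \to x_0} u(y) = f(x_0) = 1.
\end{align*}
Combining this with $R_K^1 \equiv 1$ on $K$ (Lemma \ref{brelot1}(ii)) and $\hat{R}_K^1 = R_K^1$ on $(\partial K)^c \supset \widetilde{\Omega}$ (Lemma \ref{brelot1}(iii)), we see that $R_K^1(z) \to 1$ along every approach $z \to x_0$, whether through $K$ or through $\widetilde{\Omega}$. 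The lower semi-continuous regularization then yields $u(x_0) = \hat{R}_K^1(x_0) = \liminf_{z \to x_0} R_K^1(z) = 1$, the desired contradiction.

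The only delicate point is the first step, where the two-sided use of Lemma \ref{local} hinges on the local identity $B' \cap \widetilde{\Omega} = B' \cap \Omega$; once that is noted, Theorem \ref{cappoten} and Lemma \ref{brelot1} do the remaining bookkeeping essentially for free.
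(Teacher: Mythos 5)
Your proof is correct, and it takes a genuinely different route from the paper's. You argue by contraposition: assuming $x_0$ is regular for $\Omega$, you transfer regularity to the complementary domain $\widetilde{\Omega}=B_{2\rho}(x_0)\setminus K$ via the set identity $B_r(x_0)\cap\widetilde{\Omega}=B_r(x_0)\cap\Omega$ ($r<\rho$) and a two-sided application of Lemma \ref{local}, then invoke Theorem \ref{cappoten} to read $u$ as $\overline{H}_f(\widetilde{\Omega})$ for the continuous datum $f$ with $f(x_0)=1$, and conclude $\hat{R}_K^1(x_0)=1$ from the boundary limit together with $R_K^1\equiv 1$ on $K$. The paper instead works directly: it constructs an explicit continuous boundary datum on $\partial(\Omega\cap B_{2\rho}(x_0))$ equal to $3/2$ near $x_0$, proves the pointwise comparison $\underline{H}_f\leq \tfrac12+u$ by matching sub- and supersolutions on each boundary piece, and deduces $\liminf_{x\to x_0}\underline{H}_f<f(x_0)$, exhibiting a concrete witness of irregularity before passing to $\Omega$ by (one direction of) Lemma \ref{local}. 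Your argument is shorter and shows the lemma is essentially formal given the locality of regularity and the Perron-solution identification of the capacity potential; its cost is that it leans on the harder direction of Lemma \ref{local} (the pasting construction of a global barrier from a local one, applied to $\widetilde{\Omega}$), whereas the paper's version produces an explicit boundary function for which continuity fails. The only point worth flagging, common to both proofs, is the degenerate case in Lemma \ref{local} where $\partial B_r(x_0)\cap\widetilde{\Omega}=\varnothing$; there $B_r(x_0)\cap\widetilde{\Omega}$ is a union of components of $\widetilde{\Omega}$ and the barrier extends by a positive constant, so nothing breaks.
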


\begin{proof}
	Since the capacity potential $u$ is the lower semi-continuous regularization, we have
	\begin{align} \label{irr}
	u(x_0)=\liminf_{\Omega \ni x \to x_0}u(x)<1.
	\end{align}
	Moreover, by definition, we have $u_{\rho'} \leq u_{\rho}$ when $0<\rho'<\rho$. Thus, we can choose a sufficiently small $\rho>0$ such that \eqref{irr} holds and $\Omega \cap \partial B_{2\rho}(x_0)\neq \varnothing$. 
	
	Now we define a smooth boundary data $f$ on $\partial (\Omega \cap B_{2\rho}(x_0))$ such that
	$f(x)=3/2$ if $x \in \partial \Omega \cap B_{\rho/2}(x_0)$, $0 \leq f(x) \leq 3/2$ if $x \in \partial \Omega \cap (B_{\rho}(x_0) \setminus B_{\rho/2}(x_0))$ and $f(x)=0$ on the remaining part of $\partial (\Omega \cap B_{2\rho}(x_0))$. Then we consider the lower Perron solution $\underline{H}_{f}(\Omega \cap B_{2\rho}(x_0))$. We claim that the following inequality holds:
	\begin{align} \label{irr2}
	\underline{H}_{f}(x) \leq \frac{1}{2}+u(x), \quad x \in \Omega \cap B_{2\rho}(x_0).
	\end{align}
	Recalling the comparison principle, it is enough to check the above inequality on the boundary of the domain $\Omega \cap B_{2\rho}(x_0)$. For this purpose, let $v \in \mathcal{L}_{f}(\Omega \cap B_{2\rho}(x_0))$ and $w \in \mathcal{U}_{g} (B_{2\rho}(x_0) \setminus (\overline{B_{\rho}(x_0)} \setminus \Omega))$ where $g$ is given by  (recall Theorem \ref{cappoten})
	\begin{align*}
	g=	
	\left\{ \begin{array}{ll} 
	1 & \text{on $\partial(\overline{B_{\rho}(x_0)} \setminus \Omega)$},\\
	0 & \text{in $\partial B_{2\rho}(x_0)$}.
	\end{array} \right.
	\end{align*}
	\begin{enumerate}[(i)]
		\item (on $\partial \Omega \cap B_{2\rho}(x_0)$) 
		First, for $x \in \partial \Omega \cap B_{\rho}(x_0)$, we have
		\begin{align*}
		\limsup_{y \to x}v(y) \leq f(x) \leq \frac{3}{2} =\frac{1}{2}+g(x) \leq \frac{1}{2}+\liminf_{y \to x}w(y).
		\end{align*}
		Next, for $x \in \partial \Omega \cap (B_{2\rho}(x_0) \setminus B_{\rho}(x_0))$, we have
		\begin{align*}
		\limsup_{y \to x}v(y) \leq f(x)=0 \leq \frac{1}{2}+g(x) \leq \frac{1}{2}+\liminf_{y \to x}w(y).
		\end{align*}
		
		\item (on $\Omega \cap \partial B_{2\rho}(x_0)$) Similarly, we obtain
		\begin{align*}
		\limsup_{y \to x}v(y) \leq f(x)=0 \leq \frac{1}{2}+g(x) \leq \frac{1}{2}+\liminf_{y \to x}w(y).
		\end{align*}
	\end{enumerate}
	Now since $v$ and $w$ are $F$-subharmonic and $F$-superharmonic, respectively, we derive that
	\begin{align*}
	v \leq \frac{1}{2}+w, \quad \text{in $\Omega \cap B_{2\rho}(x_0)$}.
	\end{align*}
	Taking the supremum on $v$ and the infimum on $w$, we conclude \eqref{irr2} which implies that
	\begin{align*}
	\liminf_{\Omega \cap B_{2\rho}(x_0)\ni x \to x_0}\underline{H}_f(x) \leq \frac{1}{2}+\liminf_{\Omega \cap B_{2\rho}(x_0)\ni x \to x_0}u(x) <\frac{3}{2}=f(x_0).
	\end{align*}
	Therefore, $x_0$ is irregular with respect to  $\Omega \cap B_{2\rho}(x_0)$. Recalling Lemma \ref{local}, we deduce that $x_0$ is irregular with respect to $\Omega$.
\end{proof}

\section{A sufficient condition for the regularity of a boundary point}
In this section, we prove the sufficiency of the Wiener criterion and its sequential corollaries, via the potential estimates. More precisely, we first develop quantitative estimates for the capacity potential $\hat{R}_K^1(B)$ by employing capacitary estimates obtained in Section 3. Then we adopt the characterization of a regular boundary point in terms of the capacity potential to deduce the desired conclusion.

\begin{definition}
	We say that a set $E$ is \textit{$F$-thick} at $z$ if the Wiener integral diverges, i.e.
	\begin{align}\label{wieint}
	\int_0^1 \mathrm{cap}_F (E \cap \overline{B_t(z)}, B_{2t}(z))  \frac{\mathrm{d}t}{t}=\infty.
	\end{align}
	For simplicity, we write
	\begin{align*}
	\varphi_F(z, E, t)=\mathrm{cap}_F (E \cap \overline{B_t(z)}, B_{2t}(z)),
	\end{align*}
	for the capacity density function in \eqref{wieint}.
\end{definition}

\begin{remark}
	Recalling Lemma \ref{capball}, there exists a constant $c>0$ which is independent of $t>0$ such that
	\begin{align*}
	1/c \leq \mathrm{cap}_F(\overline{B_t}, B_{2t}) \leq c.
	\end{align*}
	Thus, one may write an equivalent form of \eqref{wieint}:
	\begin{align*}
	\int_0^1 \frac{\mathrm{cap}_F (E \cap \overline{B_t(z)}, B_{2t}(z))}{\mathrm{cap}_F (\overline{B_t(z)}, B_{2t}(z))}  \frac{\mathrm{d}t}{t}=\infty,
	\end{align*}
	which is a similar form to the Wiener integral appearing in \cite{KM94, Wie24a}.
\end{remark}

Now we can state an equivalent form of our main theorem, Theorem \ref{wiener}:
\begin{align*}
\text{If $\Omega^c$ is both $F$-thick and $\widetilde{F}$-thick at a boundary point $x_0 \in \partial \Omega$, then $x_0$ is regular.}
\end{align*}
To prove this statement, we need several auxiliary lemmas regarding the capacity potential.

\begin{lemma}\label{est0}
	Fix a ball $B$. Suppose that $K \subset B'$ is compact and $v=\hat{R}_K^1(B)$. If $0<\gamma<1$ and $K_{\gamma}:=\{x \in B : v(x) \geq \gamma\} \subset B'$, then
	\begin{align*}
	\mathrm{cap} (K_{\gamma}, B)=\frac{1}{\gamma}\mathrm{cap} (K, B).
	\end{align*}
\end{lemma}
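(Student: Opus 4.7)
The plan is to establish the pointwise identity $\hat{R}_{K_\gamma}^{1}(B) = \min\{v/\gamma,\,1\}$ on $B$, from which evaluating at the reference point $y_0 = x_0 + \tfrac{3}{2}re_1$ yields the claim. Since $K_\gamma \subset B' = B_{7r/5}(x_0)$ while $|y_0 - x_0| = 3r/2 > 7r/5$, the point $y_0$ lies outside $\overline{K_\gamma}$; by Lemma \ref{balayage} we then have $\hat{R}_{K_\gamma}^1(B)(y_0) = R_{K_\gamma}^1(B)(y_0)$, and moreover $v(y_0) < \gamma$, so the identity at $y_0$ reduces to $\mathrm{cap}(K_\gamma, B) = v(y_0)/\gamma = \mathrm{cap}(K,B)/\gamma$. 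As preliminaries I would record that $K_\gamma$ is closed in $B$, being the superlevel set $\{v \geq \gamma\}$ of a lower semicontinuous function, and that $v \equiv \gamma$ on $\partial K_\gamma$: lower semicontinuity gives $v \geq \gamma$ at such points, while a sequence from the open set $\{v < \gamma\}$ gives the reverse bound.

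For the upper bound I take $\psi := \min\{v/\gamma,\,1\}$ as a candidate for $\Phi_{K_\gamma}^1(B)$. The function $v/\gamma$ is $F$-superharmonic by Lemma \ref{easy}(i) (the positive homogeneity (F2) being what permits scalar multiplication to preserve $F$-superharmonicity), the constant $1$ is trivially $F$-superharmonic, and their minimum $\psi$ is $F$-superharmonic by Lemma \ref{easy}(ii). Since $\psi \geq 0$ on $B$ and $\psi \equiv 1$ on $K_\gamma$ by the very definition of $K_\gamma$, we have $\psi \in \Phi_{K_\gamma}^1(B)$, whence $\hat{R}_{K_\gamma}^1(B) \leq \psi$ throughout $B$.

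The reverse inequality is the substantive step. For an arbitrary $u \in \Phi_{K_\gamma}^1(B)$ I would apply the comparison principle (Theorem \ref{cpsuperhar}) on the open set $B \setminus \overline{K_\gamma}$: there $v$ is $F$-harmonic (hence $F$-subharmonic) by Lemma \ref{brelot1}(iv), using that $v \equiv 1$ on $\mathrm{int}(K)$ forces the inclusion $\mathrm{int}(K) \subset K_\gamma$ and therefore $B \setminus \overline{K_\gamma} \subset B \setminus K$, while $\gamma u$ is $F$-superharmonic by Lemma \ref{easy}(i). The boundary comparison $\limsup v \leq \liminf \gamma u$ on $\partial(B \setminus \overline{K_\gamma})$ splits into two cases: at $x \in \partial K_\gamma$ one uses $v(x) = \gamma$ together with $\gamma u \geq \gamma$ coming from the admissibility of $u$, while at $x \in \partial B$ one uses that $v(y) \to 0$ (by Theorem \ref{cappoten} together with the regularity of the ball $B$, Theorem \ref{cone}) and $\gamma u \geq 0$. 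Thus $v \leq \gamma u$ on $B \setminus \overline{K_\gamma}$; evaluating at $y_0$ and taking the infimum over $u$ gives $R_{K_\gamma}^1(B)(y_0) \geq v(y_0)/\gamma$, which combines with the earlier identification $\hat{R}_{K_\gamma}^1(B)(y_0) = R_{K_\gamma}^1(B)(y_0)$ to close the argument.

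The main obstacle is the inclusion $B \setminus \overline{K_\gamma} \subset B \setminus K$, equivalently $K \subset \overline{K_\gamma}$. This is immediate when $K$ coincides with the closure of its interior, but for a general compact $K$ the balayage $v$ may drop below $\gamma$ on a thin exceptional subset of $\partial K$ where $\hat{R}_K^1 \neq R_K^1$. I expect to handle this by replacing $K$ with the regularized set $K^{\ast} := K \cap \overline{K_\gamma}$ and invoking the removability of $F$-capacity zero sets (from the previous subsection) to conclude that $\hat{R}_K^1 = \hat{R}_{K^{\ast}}^1$, after which the comparison argument proceeds verbatim on $K^{\ast}$.
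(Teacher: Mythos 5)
Your argument is, in its core, the same as the paper's: the upper bound comes from checking that $v/\gamma$ is admissible for $K_\gamma$ (using (F2) and Lemma \ref{easy}), and the lower bound comes from comparing $v/\gamma$ against an arbitrary competitor for $K_\gamma$ on $B\setminus K_\gamma$, using that $v$ is $F$-harmonic off $K$, that $v<\gamma$ on $B\setminus K_\gamma$, and that $v$ vanishes on $\partial B$. (The paper phrases the competitor as an element of $\mathcal{U}_{f_\gamma}(B\setminus K_\gamma)$ via Theorem \ref{cappoten} rather than of $\Phi_{K_\gamma}^1(B)$, but by that same theorem these are interchangeable.) Up to your last paragraph the write-up is correct and in fact spells out the boundary inequalities more carefully than the paper does.

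The final paragraph is where the proposal does not close. You are right that the comparison step needs $K\subset\overline{K_\gamma}$, i.e.\ $v\ge\gamma$ everywhere on $K$ and not just on $\mathrm{int}\,K$; the paper assumes this silently when it declares $v/\gamma$ to be $F$-harmonic in $B\setminus K_\gamma$, so you have identified a gap in the paper's own proof rather than introduced a new one. But the patch you sketch is not supported by the paper's toolbox: the removability lemma for capacity-zero sets is proved only under the hypothesis that $F$ is convex, which Lemma \ref{est0} does not assume, and, more fundamentally, to invoke it you would first have to know that $K\setminus\overline{K_\gamma}\subset\{x\in K:\hat R_K^1(B)(x)<1\}$ has $F$-capacity zero. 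That is a Kellogg-type statement (the irregular points of $K$ form a polar set) which is established nowhere in the paper and is a substantial theorem even in the classical linear theory. So as written the reduction to $K^\ast=K\cap\overline{K_\gamma}$ is an appeal to an unavailable result; either the lemma must be read with the implicit hypothesis $v\ge\gamma$ on $K$, or a genuinely new ingredient is required to justify this step.
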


\begin{proof}
	We write $v_{\gamma}:=\hat{R}_{K_{\gamma}}^1(B)$. Then by Lemma \ref{balayage} and the definition of a reduced function, 
	\begin{align*}
	v_{\gamma}=R_{K_{\gamma}}^1(B)=\inf \Phi_{K_{\gamma}}^1=\inf\{\text{$w$: $w$ is $F$-superharmonic in $B$, $w \geq \psi_{\gamma}$ in $B$}\} \quad \text{in $B \setminus K_{\gamma}$},
	\end{align*}
	where 
	\begin{align*}
	\psi_{\gamma}=
	\left\{ \begin{array}{ll} 
	1 & \textrm{in $K_{\gamma}$},\\
	0 & \textrm{in $B \setminus K_{\gamma}$}.
	\end{array} \right.
	\end{align*}
	
	\begin{enumerate}[(i)]
		\item Clearly, $v=\hat{R}_{K}^1(B)$ is $F$-superharmonic in $B$ and so is $v/\gamma$ due to (F2). Since $v \geq \gamma$ in $K_{\gamma}$, we have $v/\gamma \geq 1$ in $K_{\gamma}$. Thus, $v/\gamma \in \Phi_{K_{\gamma}}^1$ and so 
		\begin{align*}
		\frac{v}{\gamma} \geq v_{\gamma} \quad \text{in $B \setminus K_{\gamma}$}.
		\end{align*}
		
		\item Recalling Theorem \ref{cappoten}, $v_{\gamma}=\overline{H}_{f_{\gamma}}(B \setminus K_{\gamma})$ in $B \setminus K_{\gamma}$ where
		\begin{align*}
		f_{\gamma}=
		\left\{ \begin{array}{ll} 
		1 & \textrm{on $\partial K_{\gamma}$},\\
		0 & \textrm{on $\partial B$}.
		\end{array} \right.
		\end{align*}
		Then for $u \in \mathcal{U}_{f_{\gamma}}(B \setminus K_{\gamma})$, we have
		\begin{align*}
		\liminf_{B \setminus K_{\gamma} \ni y \to x}u(y) \geq f_{\gamma}(x)=1=\frac{v(x)}{\gamma},
		\end{align*}
		for any $x \in \partial K_{\gamma}$. Since $u$ is $F$-superharmonic and $v/\gamma$ is $F$-harmonic in $B \setminus K_{\gamma}$, the comparison principle leads to $u \geq v/\gamma$ in $B \setminus K_{\gamma}$ and so
		\begin{align*}
		v_{\gamma} \geq \frac{v}{\gamma}  \quad \text{in $B \setminus K_{\gamma}$}.
		\end{align*}
	\end{enumerate}
	Consequently, we conclude that
	\begin{align*}
	\mathrm{cap} (K_{\gamma}, B)=v_{\gamma}(y_0)=\frac{1}{\gamma}v(y_0)=\frac{1}{\gamma}\mathrm{cap} (K,B).
	\end{align*}
\end{proof}

\begin{lemma} \label{est}
	Fix a ball $B=B_{2r}(x_0)$.
	 Let $K \subset B_r=B_r(x_0)$ be a compact set and $v=\hat{R}_K^1(B)$. Then there exists a constant $c>0$ which is independent of $K$ and $r$ such that
	 \begin{align*}
		v(x) \geq c \, {\mathrm{cap} (K,B)},
	 \end{align*}
	 for any $x \in B_r$.
\end{lemma}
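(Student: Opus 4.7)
The plan is to combine the Harnack inequality (applicable because $v$ is nonnegative and $F$-harmonic away from $K$) with the minimum principle for $F$-superharmonic functions. By Lemma \ref{brelot1}, $v = \hat R_K^1(B)$ is $F$-superharmonic in $B = B_{2r}(x_0)$, satisfies $0 \le v \le 1$, and is $F$-harmonic in $B \setminus K$. Since $K \subset B_r(x_0)$, the annular region $B_{2r}(x_0) \setminus \overline{B_r(x_0)}$ lies entirely inside the region of $F$-harmonicity. Moreover, by definition of the capacity,
\[
v(y_0) = \mathrm{cap}_F(K, B), \quad y_0 = x_0 + \tfrac{3}{2}r e_1 \in \partial B_{3r/2}(x_0).
\]

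First I would apply the Harnack inequality to $v \ge 0$ on the compact subset $\overline{B_{17r/10}(x_0)} \setminus B_{13r/10}(x_0)$ of the annulus $B_{2r}(x_0) \setminus \overline{B_r(x_0)}$; by scaling, the resulting Harnack constant $C = C(n, \lambda, \Lambda)$ is independent of $r$. This subset contains the sphere $\partial B_{3r/2}(x_0)$, and in particular $y_0$, so
\[
v(x) \ge \tfrac{1}{C}\, v(y_0) = \tfrac{1}{C}\, \mathrm{cap}_F(K, B), \qquad x \in \partial B_{3r/2}(x_0).
\]

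Next, I would use the minimum principle for $F$-superharmonic functions on the subdomain $B_{3r/2}(x_0) \subset B$. Indeed, for any constant $c < \inf_{\partial B_{3r/2}} v$ the constant function $h \equiv c$ is $F$-harmonic and satisfies $h \le v$ on $\partial B_{3r/2}(x_0)$, so the defining property of $F$-superharmonicity gives $c \le v$ throughout $B_{3r/2}(x_0)$; letting $c \to \inf_{\partial B_{3r/2}} v$ yields
\[
v(x) \ge \inf_{\partial B_{3r/2}(x_0)} v \ge \tfrac{1}{C}\, \mathrm{cap}_F(K, B), \qquad x \in B_{3r/2}(x_0).
\]
Since $B_r(x_0) \subset B_{3r/2}(x_0)$, this delivers the conclusion with $c = 1/C$.

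There is no real obstacle: the two active ingredients, Harnack in an annulus (to transport the value $v(y_0)$ to the whole sphere $\partial B_{3r/2}$) and the minimum principle (to propagate inward across $K$), are both standard for $F$-superharmonic functions. The only point worth checking carefully is that the Harnack annulus $\overline{B_{17r/10}}\setminus B_{13r/10}$ is compactly contained in the region where $v$ is $F$-harmonic and is separated from both $\partial B_{2r}$ and $\partial B_r$ by a definite (scale-invariant) fraction of $r$, so that the Harnack constant is universal.
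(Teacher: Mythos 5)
Your proof is correct, but it takes a genuinely shorter route than the paper. The paper does not transport the value $v(y_0)$ directly: it sets $M=\sup_{\partial B_{6r/5}}v$ and $m=\inf_{\partial B_{6r/5}}v$, uses Harnack on the sphere $\partial B_{6r/5}$ to get $m\geq c_1M$, then recovers $\mathrm{cap}(K,B)$ from $M$ by a level-set argument: the strong maximum principle places the superlevel set $K_M=\{v\geq M\}$ inside $\overline{B_{6r/5}}$, Lemma \ref{est0} gives $\mathrm{cap}(K_M,B)=M^{-1}\mathrm{cap}(K,B)$, and Lemma \ref{capball} bounds $\mathrm{cap}(K_M,B)\leq\mathrm{cap}(\overline{B_{6r/5}},B)\sim 1$, whence $M\gtrsim\mathrm{cap}(K,B)$; the minimum principle then finishes as in your argument. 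You instead exploit that $\mathrm{cap}(K,B)$ is \emph{by definition} the value $v(y_0)$ at the point $y_0\in\partial B_{3r/2}$, which lies in the annulus $B_{2r}\setminus\overline{B_r}$ where $v$ is a non-negative $F$-solution, so a single scale-invariant Harnack chain along $\partial B_{3r/2}$ followed by the minimum principle on $B_{3r/2}$ suffices; this bypasses Lemma \ref{est0}, Lemma \ref{capball}, and the maximum-principle localization of $K_M$ entirely. Both arguments rest on the same two ingredients the paper already uses elsewhere (Harnack on a sphere, as in the remark after the definition of capacity, and the comparison property of $F$-superharmonic functions against the constant $F$-solution $h\equiv\inf_{\partial B_{3r/2}}v$), and your checks that the annulus is separated from $K$ and from $\partial B_{2r}$ by a fixed fraction of $r$, so that the Harnack constant is universal, are exactly the right points to verify. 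The only thing the paper's longer detour buys is a reusable identity (Lemma \ref{est0}) that it needs again later (e.g.\ in Lemma \ref{linecap}); as a proof of this particular lemma, yours is cleaner.
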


\begin{proof}
	Denote 
	\begin{align*}
	M:=\sup_{\partial B_{6r/5}} v, \quad m:=\inf_{\partial B_{6r/5}} v.
	\end{align*}
	Since $v$ is a non-negative $F$-solution in $B \setminus K$, Harnack inequality yields that there exists a constant $c_1>0$ independent of $r>0$ such that
	\begin{align} \label{aux1}
		c_1M \leq m.
	\end{align}
	Morevoer, the strong maximum principle in $B \setminus B_{6r/5}$ implies that 
	\begin{align*}
	K_M:=\{v \in B: v(x) \geq M\} \subset \overline{B_{6r/5}},
	\end{align*}
	and so
	\begin{align}\label{aux2}
	\mathrm{cap} (K_M, B) \leq \mathrm{cap} (\overline{B_{6r/5}},B) \sim 1.
	\end{align}
	Here we applied Lemma \ref{capball} and the comparable constant does not depend on $K$ and $r$. 
	
	Now since $K_M \subset B'$, we can apply Lemma \ref{est0}:
	\begin{align} \label{aux3}
	\mathrm{cap} (K_M, B)=\frac{1}{M} \mathrm{cap} (K, B).
	\end{align}
	Finally, combining \eqref{aux1}, \eqref{aux2} and \eqref{aux3}, we conclude that
	\begin{align*}
	m \geq c_1 M = c_1 \cdot \frac{\mathrm{cap} (K, B)}{\mathrm{cap} (K_M, B)} \geq c_2  \, {\mathrm{cap} (K,B)},
	\end{align*}
	and the minimum principle leads to the desired result.
\end{proof}

We may rewrite the previous lemma as
\begin{align} \label{impest}
 \hat{R}_K^1(B_{2r}) (x) \geq c\, \varphi_F(x_0, K, r), \quad \text{for any $x \in B_r$}.
\end{align}

\begin{lemma} \label{potential}
	Let $x_0 \in \partial \Omega$, $\rho>0$ and 
	\begin{align*}
	w=1-\hat{R}_{\overline{B_{\rho}(x_0)} \setminus \Omega}^{1}( B_{2\rho}(x_0)).
	\end{align*}
	Then for all $0<r \leq \rho$, there exists a constant $c>0$ such that
	\begin{align*}
	w(x) \leq \exp\Big(-c\int_r^{\rho}\varphi_F(x_0, \Omega^c, t)\frac{\mathrm{d}t}{t} \Big),
	\end{align*}
	for any $x \in B_r(x_0)$.
\end{lemma}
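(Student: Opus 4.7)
The strategy is a dyadic iteration: at each scale $r_k = 2^{-k}\rho$, we use Lemma \ref{est} to produce a multiplicative improvement of $\inf u$ (equivalently, of $\sup w$) by an amount proportional to the local capacity density $\gamma_k := \varphi_F(x_0, \Omega^c, r_k)$, and then we chain these improvements to obtain the exponential bound. Writing $u = \hat{R}^{1}_{\overline{B_\rho(x_0)}\setminus\Omega}(B_{2\rho}(x_0)) = 1 - w$, $K_k = \overline{B_{r_k}(x_0)} \setminus \Omega$, and $m_k = \inf_{B_{r_k}(x_0)} u$, the entire proof reduces to establishing the one-step inequality
\[
1 - m_{k+1} \leq (1 - c\gamma_{k+1})(1 - m_k), \qquad k \geq 0,
\]
for a universal constant $c > 0$, since then $w \leq 1 - m_k$ on $B_{r_k}(x_0)$.

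\textbf{The one-step estimate.} Assume $m_k < 1$ (otherwise $u \equiv 1$ on $B_{r_k}(x_0)$ by the minimum principle, and we are done), and set $\tilde u := (u - m_k)/(1 - m_k)$. By (F2), $\tilde u$ is $F$-superharmonic in $B_{r_k}(x_0)$; by definition of $m_k$, $\tilde u \geq 0$ there; and by Lemma \ref{balayage} applied to the balayage $\hat R^{1}_K$, we have $u = 1$ on $\mathrm{int}(K) \supset \mathrm{int}(K_{k+1})$, so $\tilde u \equiv 1$ on $\mathrm{int}(K_{k+1})$. Letting $v_{k+1} := \hat R^{1}_{K_{k+1}}(B_{2r_{k+1}}(x_0)) = \hat R^{1}_{K_{k+1}}(B_{r_k}(x_0))$ be the $F$-capacity potential at scale $r_{k+1}$, these properties make $\tilde u$ admissible (up to an exceptional thin set, absorbed by the lower semi-continuous regularization) in the infimum defining $v_{k+1}$, yielding $\tilde u \geq v_{k+1}$ in $B_{r_k}(x_0)$, i.e., $u \geq m_k + (1-m_k)v_{k+1}$. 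Combining with \eqref{impest} of Lemma \ref{est}, $v_{k+1}(x) \geq c\,\gamma_{k+1}$ in $B_{r_{k+1}}(x_0)$, whence $u(x) \geq m_k + c(1-m_k)\gamma_{k+1}$ there. Taking the infimum yields the claim.

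\textbf{Iteration and conversion to the integral.} Iterating gives
\[
1 - m_k \leq \prod_{j=1}^k (1 - c\gamma_j) \leq \exp\Bigl(-c\sum_{j=1}^k \gamma_j\Bigr).
\]
For $t \in [r_j, r_{j-1}]$, monotonicity of the capacity in the first argument and the comparability Lemma \ref{capest} in the second argument give
\[
\varphi_F(x_0, \Omega^c, t) \leq C\,\mathrm{cap}_F(K_{j-1}, B_{2r_{j-1}}(x_0)) = C\gamma_{j-1},
\]
so $\int_{r_k}^{\rho} \varphi_F(x_0,\Omega^c,t)\,dt/t \leq C(\log 2)\sum_{j=0}^{k-1}\gamma_j$, and $\sum_j \gamma_j$ and the Wiener integral differ only by universal constants. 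For general $r \in (0,\rho]$, picking $k$ with $r_{k+1} < r \leq r_k$ gives $B_r(x_0) \subset B_{r_k}(x_0)$, and the bounded tail $\int_r^{r_k}\varphi_F\,dt/t \leq C$ (since $\varphi_F \leq 1$ uniformly by Lemma \ref{brelot2}(i)) is absorbed into the constant. Relabeling $c$ yields the bound in the statement.

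\textbf{Main obstacle.} The delicate point is the comparison $\tilde u \geq v_{k+1}$ at the thin subset of $\partial K_{k+1}$ lying on $\partial\Omega$, where $\hat R^{1}_K$ may drop below $1$ and so $\tilde u$ is not literally in the admissible class $\Phi^{\psi_{k+1}}$ of the reduced function. The remedy is that $v_{k+1}$ itself is defined as the lower semi-continuous regularization of $R^1_{K_{k+1}}$, which automatically smooths away the exceptional set. Equivalently, one runs the comparison principle (Theorem \ref{cpsuperhar}) between the $F$-superharmonic $\tilde u$ and the $F$-harmonic $v_{k+1}$ in the open set $B_{r_k}(x_0)\setminus K_{k+1}$, verifying the boundary inequality $\limsup v_{k+1} \leq \liminf \tilde u$ outside the thin exceptional set and then concluding globally by lower semi-continuity. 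Apart from this subtlety, the proof is just dyadic bookkeeping on top of the quantitative capacity estimate \eqref{impest}.
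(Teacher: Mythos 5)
Your overall scheme coincides with the paper's: a dyadic iteration in which Lemma \ref{est} (i.e.\ \eqref{impest}) yields a multiplicative gain $(1-c\,\gamma_{k+1})$ per scale, followed by the standard conversion of the dyadic sum into the Wiener integral. (The paper organizes the iteration through a decreasing chain of auxiliary balayages $u_i=\hat R^{\psi_i}(B_i)$, with obstacle $1$ on $\overline{B_{i+1}}\cap\Omega^c$ and $m_{i-1}$ elsewhere, plus the monotonicity $u_i\ge u_{i+1}$; you iterate on $u$ itself with $m_k=\inf_{B_{r_k}}u$, which is a legitimate and arguably cleaner bookkeeping.) The gap is in your justification of the one-step comparison $\tilde u\ge v_{k+1}$. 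You correctly observe that $\tilde u=(u-m_k)/(1-m_k)$ need not be $\ge 1$ on the part of $\partial K_{k+1}$ lying in $\partial\Omega$, so it need not belong to $\Phi^1_{K_{k+1}}(B_{r_k})$; but neither of your proposed remedies is available with the tools of this paper. The lower semi-continuous regularization in the definition of $v_{k+1}$ regularizes the infimum over the \emph{admissible} class and does nothing to admit a non-admissible competitor into that class; and Theorem \ref{cpsuperhar} requires the boundary inequality at \emph{every} boundary point --- no comparison principle modulo thin or polar sets is established anywhere in the paper, and ``concluding globally by lower semi-continuity'' is not an argument.

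The step is nevertheless true, and the correct repair is to renormalize the competitors rather than the balayage itself: every $\eta\in\Phi^1_{\overline{B_\rho(x_0)}\setminus\Omega}(B_{2\rho}(x_0))$ genuinely satisfies $\eta\ge1$ on all of $K_{k+1}$ (not merely quasi-everywhere) and $\eta\ge R^1\ge\hat R^1=u\ge m_k$ on $B_{r_k}(x_0)$, so $(\eta-m_k)/(1-m_k)\in\Phi^1_{K_{k+1}}(B_{r_k}(x_0))$ and hence $\eta\ge m_k+(1-m_k)R^1_{K_{k+1}}(B_{r_k}(x_0))$ there. Taking the infimum over $\eta$ and then lower semi-continuous regularizations of both sides (legitimate since $B_{r_k}(x_0)$ is open) gives $u\ge m_k+(1-m_k)v_{k+1}$ in $B_{r_k}(x_0)$, which is exactly your one-step inequality; this is precisely what the paper's identity $(u_i-m_{i-1})/(1-m_{i-1})=v_i$ encodes. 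With this fix, the remainder of your argument --- the product bound, the comparison of $\sum_j\gamma_j$ with the integral via Lemma \ref{capest}, and the absorption of the index shift ($\gamma_0,\gamma_k\le1$) and of the tail $\int_r^{r_k}$ into the constant --- goes through at the same level of rigor as the paper's own proof.
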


\begin{proof}
	Denote $B_i=B_{2^{1-i}\rho}(x_0)$. Fix $0<r\leq \rho$ and let $k$ be the integer with $2^{-k}\rho < r \leq 2^{1-k}\rho$. Then write for $i=0, 1,2,...$
	\begin{align*}
	v_i:=\hat{R}_{\overline{B_{i+1}} \setminus \Omega}^{1}(B_i)
	\end{align*}
	and
	\begin{align*}
	a_i:=\varphi_{F}(x_0, \Omega^c, 2^{-i}\rho).
	\end{align*}
	Since $e^t \geq 1+t$, estimate \eqref{impest} yields that
	\begin{align*}
	v_i \geq c a_i \geq 1-\exp(-ca_i) \quad \text{in $B_{i+1}$}.
	\end{align*}
	Thus, denoting $m_0:=\inf_{B_1} v_0$, we have
	\begin{align*}
	1-m_0 \leq \exp(-ca_0).
	\end{align*}
	Next, let $D_1:=B_1 \setminus (\overline{B_2} \cap \Omega^c)$ and 
	\begin{align*}
		\psi_1:=	
		\left\{ \begin{array}{ll} 
		1 & \text{in $\overline{B_2} \cap \Omega^c$},\\
		m_0 & \text{in $D_1$}.
		\end{array} \right.
	\end{align*}
	Then we write $u_1:=\hat{R}^{\psi_1}(B_1)$ be the balayage with respect to the $\psi_1$ in $B_1$. It immediately follows from the definition of balayage that
	\begin{align*}
	\frac{u_1-m_0}{1-m_0}=\hat{R}_{\overline{B_2} \cap \Omega^c}^1(B_1)=v_1.
	\end{align*}
	Again, denoting $m_1:=\inf_{B_2}u_1$, we obtain
	\begin{align*}
	1-m_1 \leq (1-m_0) \exp(-ca_1) \leq \exp(-c(a_1+a_0)).
	\end{align*}
	Now iterate this step: let $D_i:=B_i \setminus (\overline{B_{i+1}} \cap \Omega^c)$ and 
	\begin{align*}
	\psi_i=	
	\left\{ \begin{array}{ll} 
	1 & \text{in $\overline{B_{i+1}} \cap \Omega^c$},\\
	m_{i-1} & \text{in $D_i$}.
	\end{array} \right.
	\end{align*}
	Denoting $u_i:=\hat{R}^{\psi_i}(B_i)$ and $m_{i}:=\inf_{B_{i+1}}u_i$, we have 
	\begin{align*}
	\frac{u_i-m_{i-1}}{1-m_{i-1}}=v_i
	\end{align*}
	and so
	\begin{align*}
	1-m_i \leq (1-m_{i-1}) \exp(-ca_i) \leq \exp\Big(-c \sum_{j=0}^i a_j\Big).
	\end{align*}
	Furthermore, we claim that $u_i \geq u_{i+1}$ in $B_{i+1}$. Indeed, by Theorem \ref{cappoten}, $u_i=\overline{H}_{f_i}(D_i)$ in $D_i$ where $f_i \in C(\partial D_i)$ is given by 
	\begin{align*}
	f_i=
	\left\{ \begin{array}{ll} 
	1 & \text{in $\partial (\overline{B_{i+1}} \cap \Omega^c)$},\\
	m_{i-1} & \text{in $\partial B_i$}.
	\end{array} \right.
	\end{align*}
	Thus, for $u \in \mathcal{U}_{f_i}(D_i)$, we have
	\begin{align*}
	&\liminf_{D_{i+1} \ni y \to x}u(y) \geq 1 \geq \limsup_{D_{i+1} \ni y \to x} u_{i+1}(y) \quad \text{for any $x \in \partial (\overline{B_{i+2}} \cap \Omega^c)$},\\
	&\liminf_{D_{i+1} \ni y \to x}u(y) \geq m_i = \limsup_{D_{i+1} \ni y \to x} u_{i+1}(y) \quad \text{for any $x \in \partial B_{i+1}$}.
	\end{align*}
	Therefore, by the comparison principle, $u \geq u_{i+1}$ in $D_{i+1}$ and so $u_i=\overline{H}_{f_i}(D_i) \geq u_{i+1}$ in $B_{i+1}$. 
	
	Repeating the argument above, we conclude that $v_0 \geq u_1 \geq \cdots \geq u_k$ in $B_k$, which implies that
	\begin{align*}
	w=1-v_0 \leq 1-u_k \leq 1-m_k \leq \exp \Big(-c \sum_{j=0}^k a_j\Big) \quad \text{in $B_{k+1}$.}
	\end{align*}
	Finally, the result follows from
	\begin{align*}
	\int_r^{\rho} \varphi_F(x_0, \Omega^c, t)\frac{\mathrm{d}t}{t} \leq c \sum_{i=1}^k a_i,
	\end{align*}
	which can be easily checked from the dyadic decomposition. Indeed, we can deduce from Lemma \ref{capball} and Lemma \ref{capest} that if $t \leq s \leq 2t$, then 
	\begin{align*}
	\mathrm{cap}_F(\overline{B_t} \setminus K, B_{2t}) \sim \mathrm{cap}_F(\overline{B_t} \setminus K, B_{2s}),
	\end{align*}
	where the comparable constant only depends on $n, \lambda, \Lambda$ and these results also hold for $\mathrm{cap}_{\widetilde{F}}(\cdot)$.
\end{proof}

Now we are ready to prove the sufficiency of the Wiener criterion, Theorem \ref{wiener}.

\begin{proof}[Proof of Theorem \ref{wiener}]
	Let $x_0 \in \partial \Omega$, $\rho>0$ and define
	\begin{align*}
	w_{F, \rho}:=1-\hat{R}_{\overline{B_{\rho}(x_0)} \setminus \Omega}^{1, F}( B_{2\rho}(x_0)) \quad \text{and} \quad 
		w_{\widetilde{F}, \rho}:=1-\hat{R}_{\overline{B_{\rho}(x_0)} \setminus \Omega}^{1, \widetilde{F}}( B_{2\rho}(x_0)).
	\end{align*}
	Then applying Lemma \ref{potential} for both functions, we have that for all $0<r \leq \rho$, there exist a constant $c_1, c_2>0$ such that
	\begin{align*}
	&w_{F, \rho}(x) \leq \exp\Big(-c_1\int_r^{\rho}\varphi_F(x_0, \Omega^c, t)\frac{\mathrm{d}t}{t} \Big),\\
	&w_{\widetilde{F}, \rho}(x) \leq \exp\Big(-c_2\int_r^{\rho}\varphi_{\widetilde{F}}(x_0, \Omega^c, t)\frac{\mathrm{d}t}{t} \Big),
	\end{align*}
	for any $x \in B_r(x_0)$. Letting $r \to 0^+$, we conclude that
	\begin{align*}
	\hat{R}_{\overline{B_{\rho}(x_0)} \setminus \Omega}^{1, F}( B_{2\rho}(x_0)) (x_0)=1=	\hat{R}_{\overline{B_{\rho}(x_0)} \setminus \Omega}^{1, \widetilde{F}}( B_{2\rho}(x_0)) (x_0).
	\end{align*}
	Since $\rho>0$ can be arbitrarily chosen, an application of Lemma \ref{keylem} yields that $x_0 \in \partial \Omega$ is a regular boundary point. (Note that a boundary point $x_0$ is $F$-regular if and only if it is $\widetilde{F}$-regular; Corollary \ref{ftildef}.)
\end{proof}

On the other hand, if additional information is imposed on the boundary data $f$, i.e. the boundary data $f$ has its maximum (or minimum) at $x_0 \in \partial \Omega$, then we can deduce the continuity of the Perron solution at $x_0$ under a relaxed condition:
\begin{corollary} \label{specialwie}
	Suppose that $f \in C(\partial \Omega)$ attains its maximum [resp. minimum] at $x_0 \in \partial \Omega$. If $\Omega^c$ is $F$-thick [resp. $\widetilde{F}$-thick] at $x_0 \in \partial \Omega$, then 
	\begin{align*}
	\lim_{\Omega \ni y \to x_0}\overline{H}_f(y)=f(x_0)=\lim_{\Omega \ni y \to x_0}\underline{H}_f(y).
	\end{align*}
\end{corollary}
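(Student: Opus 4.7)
The plan is to mirror the one-sided construction in the proof of Lemma \ref{keylem}, with the extremality of $f$ at $x_0$ replacing the need for the second (dual) thickness hypothesis. I present the maximum case; the minimum case is symmetric under $(f,F)\leftrightarrow(-f,\widetilde F)$ via Corollary \ref{ftildef}. Using the affine compatibility $\overline H_{cf+d}=c\overline H_f+d$ for $c>0$, I may normalize so that $f(x_0)=0$ and $\sup_{\partial\Omega}|f|\leq 1$, so the hypothesis reads $f\leq 0$ on $\partial\Omega$. The upper estimate is then immediate: the constant function $0$ lies in $\mathcal U_f$, giving $\underline H_f\leq\overline H_f\leq 0$ and hence $\limsup_{y\to x_0}\overline H_f(y)\leq 0$.

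For the matching lower estimate, fix $\varepsilon>0$, pick a ball $B$ centered at $x_0$ with $\partial(2B)\cap\Omega\neq\varnothing$ and $|f|<\varepsilon$ on $\partial\Omega\cap 2B$, and set
\begin{align*}
v=\begin{cases} -1-\varepsilon+\hat R^{1,F}_{\overline B\setminus\Omega}(2B) & \text{in } \Omega\cap 2B,\\ -1-\varepsilon & \text{in } \Omega\setminus 2B. \end{cases}
\end{align*}
By Lemma \ref{balayage} the balayage is $F$-harmonic on $\Omega\cap 2B$, so $v$ is $F$-harmonic there and constant (hence $F$-subharmonic) on $\Omega\setminus 2B$; regularity of $2B$ makes the balayage vanish continuously on $\partial(2B)\cap\Omega$, so $v$ is continuous across the interface, and the subharmonic analog of the pasting lemma (Lemma \ref{pasting}) promotes $v$ to an $F$-subharmonic function on $\Omega$. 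On $\partial\Omega\cap 2B$ one has $\limsup v\leq -\varepsilon\leq f$; on $\partial\Omega\setminus 2B$, $v\equiv -1-\varepsilon\leq -1\leq f$. Thus $v\in\mathcal L_f$ and $v\leq\underline H_f$.

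Lower semicontinuity of the balayage then yields
\begin{align*}
\liminf_{\Omega\ni y\to x_0}\underline H_f(y)\geq -1-\varepsilon+\hat R^{1,F}_{\overline B\setminus\Omega}(2B)(x_0).
\end{align*}
Applying Lemma \ref{potential} at $x=x_0$ and letting $r\to 0^+$, the assumed $F$-thickness of $\Omega^c$ at $x_0$ forces the balayage to equal $1$ at $x_0$, exactly as at the end of the proof of Theorem \ref{wiener}. Hence $\liminf\underline H_f\geq -\varepsilon$, and $\varepsilon\to 0^+$ together with the trivial upper bound sandwiches both $\overline H_f$ and $\underline H_f$ to $0=f(x_0)$ at $x_0$. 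No serious obstacle arises; the key observation is that the $\widetilde F$-thickness hypothesis in Theorem \ref{wiener} is required solely to construct the upper barrier $u$ of Lemma \ref{keylem}, whose role is absorbed here by the constant function $f(x_0)$ thanks to the extremality of $f$.
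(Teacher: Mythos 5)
Your proof is correct and follows essentially the same route as the paper: the paper's own argument is precisely a one-sided adaptation of the proof of Lemma \ref{keylem} (building only the lower-class competitor $v$ from $\hat R^{1,F}_{\overline B\setminus\Omega}(2B)$, the constant $f(x_0)$ serving as the upper competitor thanks to extremality of $f$), combined with Lemma \ref{potential} to force the balayage to equal $1$ at $x_0$. Your write-up just makes explicit what the paper leaves as "similarly as in the proof of the previous theorem."
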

\begin{proof}
	Similarly as in the proof of the previous theorem, this corollary is the consequence of Lemma \ref{keylem} and Lemma \ref{potential}.
\end{proof}

Furthermore, if the given boundary data $f \in C(\partial \Omega)$ is resolutive, then we are able to obtain a quantitative estimate for the modulus of continuity.

\begin{lemma}[The modulus of continuity]\label{modulus}
	Suppose that $\Omega$ is an open and bounded subset of $\mathbb{R}^n$. Let $f \in C(\partial \Omega)$. 
	
	If $x_0 \in \partial \Omega$ with $f(x_0)=0$, then for $0<r \leq \rho$, we have
	\begin{align*}
	\sup_{\Omega_r} \underline{H}_f^F \leq \max_{\partial \Omega_{2\rho}}f+\max_{\partial \Omega}f \cdot \exp\Big(-c\int_r^{\rho} \varphi_{\widetilde{F}}(x_0, \Omega^c, t)\frac{\mathrm{d}t}{t}\Big)
	\end{align*}
	and
	\begin{align*}
	\inf_{\Omega_r} \overline{H}_f^F \geq \min_{\partial \Omega_{2\rho}}f+\min_{\partial \Omega}f \cdot \exp\Big(-c\int_r^{\rho} \varphi_{F}(x_0, \Omega^c, t)\frac{\mathrm{d}t}{t}\Big)
	\end{align*}
	where $\Omega_r:=\Omega \cap B_r(x_0)$ and $\partial \Omega_{2\rho}:=\partial \Omega \cap B_{2\rho}(x_0)$.
	
	Furthermore, if $f$ is resolutive, then we have the quantitative estimate:
	\begin{align*}
	\min_{\partial \Omega_{2\rho}}f&+\min_{\partial \Omega}f \cdot \exp\Big(-c\int_r^{\rho} \varphi_{F}(x_0, \Omega^c, t)\frac{\mathrm{d}t}{t}\Big) \leq \inf_{\Omega_r} {H}_f^F\\
	 &\leq \sup_{\Omega_r}{H}_f^F \leq \max_{\partial \Omega_{2\rho}}f+\max_{\partial \Omega}f \cdot \exp\Big(-c\int_r^{\rho} \varphi_{\widetilde{F}}(x_0, \Omega^c, t)\frac{\mathrm{d}t}{t}\Big),
	\end{align*}
	where $H_f^F:=\overline{H}_f^F=\underline{H}_f^F$. 
\end{lemma}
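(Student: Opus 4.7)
The plan is to adapt the barrier construction of Lemma \ref{keylem} so that it records the quantitative decay supplied by Lemma \ref{potential}. Set
\begin{align*}
M := \max_{\partial\Omega} f,\quad M_\rho := \max_{\partial\Omega_{2\rho}} f,\quad m := \min_{\partial\Omega} f,\quad m_\rho := \min_{\partial\Omega_{2\rho}} f.
\end{align*}
Since $x_0 \in \partial\Omega_{2\rho}$ and $f(x_0) = 0$, one automatically has $m \leq m_\rho \leq 0 \leq M_\rho \leq M$, which will give the correct signs throughout the construction.

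For the first inequality I would manufacture an element of $\mathcal{U}_f$ out of a $\widetilde F$-capacity potential. Writing
\begin{align*}
w_{\widetilde F,\rho}(x) := 1 - \hat{R}^{1,\widetilde F}_{\overline{B_{\rho}(x_0)}\setminus\Omega}(B_{2\rho}(x_0))(x),
\end{align*}
define
\begin{align*}
u(x) := \begin{cases} M_\rho + M\, w_{\widetilde F,\rho}(x), & x \in \Omega \cap B_{2\rho}(x_0),\\ M_\rho + M, & x \in \Omega \setminus B_{2\rho}(x_0).\end{cases}
\end{align*}
By Lemma \ref{balayage}, $\hat{R}^{1,\widetilde F}$ is $\widetilde F$-harmonic in $\Omega \cap B_{2\rho}(x_0)$, so the duality $F(N) = -\widetilde F(-N)$ makes $w_{\widetilde F,\rho}$ $F$-harmonic there; combined with (F2) this renders $u$ $F$-harmonic in $\Omega \cap B_{2\rho}(x_0)$ and $F$-harmonic (constant) outside. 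Since $\partial B_{2\rho}(x_0)$ is regular, $\hat{R}^{1,\widetilde F}$ extends continuously to $0$ on it, so $u$ is continuous across $\partial B_{2\rho}(x_0) \cap \Omega$ and Lemma \ref{pasting} makes $u$ $F$-superharmonic in $\Omega$. The boundary inequalities hold because $w_{\widetilde F,\rho} \geq 0$ gives $u \geq M_\rho \geq f(x)$ on $\partial\Omega \cap B_{2\rho}(x_0)$, while $u \equiv M_\rho + M \geq M \geq f(x)$ on $\partial\Omega \setminus B_{2\rho}(x_0)$. Hence $u \in \mathcal{U}_f$ and $\underline{H}_f^F \leq \overline{H}_f^F \leq u$ in $\Omega$; plugging the decay estimate of Lemma \ref{potential} (applied to $\widetilde F$) into $u(x) = M_\rho + M\, w_{\widetilde F,\rho}(x)$ yields the first inequality.

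The second inequality is obtained by the mirror construction with $w_{F,\rho} := 1 - \hat{R}^{1,F}_{\overline{B_{\rho}(x_0)}\setminus\Omega}(B_{2\rho}(x_0))$ and
\begin{align*}
v(x) := \begin{cases} m_\rho + m\, w_{F,\rho}(x), & x \in \Omega \cap B_{2\rho}(x_0),\\ m_\rho + m, & x \in \Omega \setminus B_{2\rho}(x_0).\end{cases}
\end{align*}
Now $w_{F,\rho}$ is $\widetilde F$-harmonic in $\Omega \cap B_{2\rho}(x_0)$, but because $m \leq 0$, the product $m\, w_{F,\rho} = |m|\cdot(-w_{F,\rho})$ is $F$-harmonic there via (F2); the $F$-subharmonic analogue of the pasting lemma (gluing by $\max$ rather than $\min$) together with the boundary checks $\limsup v \leq m_\rho \leq f$ on $\partial\Omega \cap B_{2\rho}(x_0)$ and $v \equiv m_\rho + m \leq m \leq f$ on $\partial\Omega \setminus B_{2\rho}(x_0)$ place $v$ in $\mathcal{L}_f$. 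Therefore $\overline{H}_f^F \geq \underline{H}_f^F \geq v$, and multiplication by $m \leq 0$ reverses the bound of Lemma \ref{potential} to produce the second inequality. For resolutive $f$, the identity $H_f^F = \overline{H}_f^F = \underline{H}_f^F$ combines both one-sided bounds into the claimed two-sided modulus of continuity.

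The only real conceptual subtlety here is the duality bookkeeping: an upper $F$-barrier must be manufactured out of a $\widetilde F$-capacity potential and the lower one out of an $F$-capacity potential, which is precisely the pairing of the two Wiener integrals already highlighted in Theorem \ref{wiener}. Once that pairing is tracked correctly, every remaining step, namely the pasting, the regularity of balls, and the sign accounting for $m$ and $M$, is a routine reprise of the computations in Lemmas \ref{keylem} and \ref{potential}.
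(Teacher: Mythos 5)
Your proposal is correct and follows essentially the same route as the paper: both build the upper bound from $\max_{\partial\Omega_{2\rho}}f+\max_{\partial\Omega}f\cdot\bigl(1-\hat{R}^{1,\widetilde F}_{\overline{B_\rho}\setminus\Omega}(B_{2\rho})\bigr)$, the lower bound from the corresponding $F$-capacity potential, and then invoke Lemma \ref{potential}. The only cosmetic difference is that you paste the barrier globally (as in Lemma \ref{keylem}) to place it in $\mathcal{U}_f$ resp. $\mathcal{L}_f$, whereas the paper compares locally in $\Omega_{2\rho}$ against arbitrary members of $\mathcal{L}_f^F$ and obtains the lower bound via the duality $\underline{H}^{\widetilde F}_{-f}=-\overline{H}^F_f$; your sign and duality bookkeeping is the correct one.
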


\begin{proof}
	 Let $v=\hat{R}_{\overline{B_{\rho}(x_0)} \setminus \Omega}^{1, \widetilde{F}}( B_{2\rho}(x_0))$ be the capacity potential of $\overline{B_{\rho}} \setminus \Omega$ with respect to $B_{2\rho}$. 
	 
	Then let $w:=1-v$ and write 
	\begin{align*}
	s:=w \cdot \max_{\partial \Omega}f+\max_{\partial \Omega_{2\rho}}f.
	\end{align*}
	Note that since we assumed $f(x_0)=0$, we have $\max_{\partial \Omega}f \geq 0$ and $\max_{\partial \Omega_{2\rho}}f \geq 0$. For $u \in \mathcal{L}_f^F$, $u$ is $F$-subharmonic and $s$ is $F$-harmonic in $\Omega_{2\rho}$.
	Moreover, 
	\begin{align*}
	\liminf_{y \to x}s(y) \geq \max_{\partial \Omega_{2\rho}}f \geq \limsup_{y \to x}u(y) \quad \text{for any $x \in \partial \Omega \cap B_{2\rho}$}
	\end{align*}
	and
	\begin{align*}
	\liminf_{y \to x}s(y) \geq \max_{\partial \Omega}f \geq \limsup_{y \to x}u(y) \quad \text{for any $x \in \Omega \cap \partial B_{2\rho}$}.
	\end{align*}
	 Thus, the comparison principle yields that $s \geq u$ in $\Omega_{2\rho}$ and so $s \geq \underline{H}_f^F$ in $\Omega_{2\rho}$. 
	 
	 On the other hand, let 
	 \begin{align*}
	 \widetilde{s}:=\Big(1-\hat{R}_{\overline{B_{\rho}(x_0)} \setminus \Omega}^{1, F}(B_{2\rho}(x_0))\Big) \cdot \max_{\partial \Omega}(-f)+\max_{\partial \Omega_{2\rho}}(-f).
	 \end{align*}
	 By the same argument, we derive $\widetilde{s} \geq \underline{H}_{-f}^{\widetilde{F}}=-\overline{H}_f^F$ in $\partial \Omega_{2\rho}$.
	 
 An application of Lemma \ref{potential} for $w$ (and $\widetilde{w}$) finishes the proof. 
\end{proof}

Now we present a new geometric condition for a regular boundary point, namely the exterior corkscrew condition; see also \cite{JK82, LZLH20}.
\begin{definition}
	We say that $\Omega$ satisfies the \textit{exterior corkscrew condition} at $x_0 \in \partial \Omega$ if there exists $0<\delta<1/4$ and $R>0$ such that for any $0<r<R$, there exists $y \in B_r(x_0)$ such that $\overline{B_{\delta r}(y)}\subset \Omega^c \cap B_r(x_0)$.
\end{definition}

Note that if $\Omega$ satisfies an exterior cone condition at $x_0 \in \partial \Omega$, then $\Omega$ satisfies an exterior corkscrew condition at $x_0$. Thus, the following corollary obtained from the (potential theoretic) Wiener criterion is a generalized result of Theorem \ref{cone}.

\begin{corollary}[Exterior corkscrew condition]\label{corkscrew}
	Suppose that $\Omega$ satisfies an exterior corkscrew condition at $x_0 \in \partial \Omega$. Then $x_0$ is a regular boundary point. Moreover, if $f$ is H\"older continuous at $x_0$ and is resolutive, then $H_f$ is H\"older continuous at $x_0$.
\end{corollary}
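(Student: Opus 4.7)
The plan is to apply Theorem \ref{wiener} for the qualitative regularity and Lemma \ref{modulus} for the quantitative H\"older estimate, after establishing the key fact that the exterior corkscrew condition forces a uniform positive lower bound on both capacity densities $\varphi_F(x_0,\Omega^c,t)$ and $\varphi_{\widetilde F}(x_0,\Omega^c,t)$ for $0 < t < R$. Once that is known, both Wiener integrals trivially diverge, and inside Lemma \ref{modulus} the exponential factor collapses to a power of $r/\rho$ that balances against H\"older decay of $f$.

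First I would prove the capacity lower bound: there exists $c_0 = c_0(\delta,n,\lambda,\Lambda) > 0$ with $\varphi_F(x_0,\Omega^c,t) \geq c_0$ for every $0 < t < R$, together with the analogous bound for $\widetilde F$. Choose $y_t \in B_t(x_0)$ with $\overline{B_{\delta t}(y_t)} \subset \Omega^c \cap B_t(x_0)$ from the corkscrew condition. By monotonicity of capacity (Lemma \ref{brelot2}(ii)) it is enough to bound $\mathrm{cap}_F(\overline{B_{\delta t}(y_t)},B_{2t}(x_0))$ from below. Rescaling by $x \mapsto (x-x_0)/t$, which preserves $F$-harmonicity thanks to (F2), reduces the task to showing
\[
\inf_{\tilde y \in \overline{B_1(0)}} \mathrm{cap}_F(\overline{B_\delta(\tilde y)},B_2(0)) > 0.
\]
The difficulty here is that Lemmas \ref{capball} and \ref{capest} only control balls concentric with the outer ball, so this off-centre estimate requires a new construction.

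For the off-centre bound I would build an explicit sub-barrier from the translated homogeneous solution of Lemma \ref{fund}. Treating $\alpha^*(F) > 0$ for concreteness (the cases $\alpha^* = 0$ and $\alpha^* < 0$ are parallel), set
\[
w(x) = \frac{V(x - \tilde y) - M_{\tilde y}}{V_{\min,\delta} - M_{\tilde y}}, \qquad M_{\tilde y} := \max_{\partial B_2(0)} V(\cdot - \tilde y), \quad V_{\min,\delta} := \min_{|z| = \delta} V(z).
\]
Translation invariance of $F$ makes $w$ an $F$-solution on $B_2(0) \setminus \{\tilde y\}$, with $w \leq 1$ on $\partial B_\delta(\tilde y)$ and $w \leq 0$ on $\partial B_2(0)$. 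The comparison principle (Theorem \ref{cpsupersol}) applied to $w$ and the capacity potential $u = \hat R^1_{\overline{B_\delta(\tilde y)}}(B_2(0))$ then yields $u \geq w$ on $B_2(0) \setminus \overline{B_\delta(\tilde y)}$. Using $|\tilde y| \leq 1$ and $\delta < 1/4$, a direct calculation shows $w(z_{\tilde y}) \geq c(\delta) > 0$ at some convenient point $z_{\tilde y}$ just outside $\overline{B_{2\delta}(\tilde y)}$ and well inside $B_{7/4}(0)$; a Harnack chain of length uniform in $\tilde y$ through the annulus $B_{7/4}(0) \setminus \overline{B_{2\delta}(\tilde y)}$ transports this to the distinguished point $y_0 = \frac{3}{2}e_1$, yielding the required $c_0$. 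Running the same argument on $\widetilde F$ produces the companion constant $\tilde c_0$.

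With $\varphi_F,\varphi_{\widetilde F} \geq c_1 := \min(c_0,\tilde c_0)$, the two Wiener integrals are infinite and Theorem \ref{wiener} delivers regularity of $x_0$. For the H\"older statement I would reduce to $f(x_0) = 0$ by translation, so $|f(x)| \leq C|x-x_0|^\alpha$ on $\partial \Omega$ near $x_0$; feeding $\varphi \geq c_1$ into Lemma \ref{modulus} collapses the exponential to $(r/\rho)^{cc_1}$ and gives
\[
|H_f(y) - f(x_0)| \leq C(2\rho)^\alpha + \|f\|_\infty (r/\rho)^{cc_1}, \qquad y \in \Omega_r, \quad 0 < r \leq \rho \leq R.
\]
Optimising $\rho = r^{cc_1/(\alpha + cc_1)}$ balances the two terms and produces $|H_f(y)-f(x_0)| \leq C'|y-x_0|^{\alpha cc_1/(\alpha + cc_1)}$, the claimed H\"older modulus. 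The main obstacle throughout is the off-centre capacity bound of the third paragraph, since every other step is a direct invocation of tools already developed in Sections 3 and 4.
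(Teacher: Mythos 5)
Your proposal follows the paper's proof essentially verbatim: the paper likewise reduces everything to the off-centre estimate $\mathrm{cap}_F(\overline{B_{\delta t}(y)},B_{2t}(x_0))\sim 1$ (which it dispatches as ``a small modification of Lemma \ref{capball} and its proof'', i.e.\ exactly your translated-$V$ barrier plus a Harnack chain), then feeds the resulting uniform lower bound on $\varphi_F,\varphi_{\widetilde F}$ into Theorem \ref{wiener} for regularity and into Lemma \ref{modulus} for the H\"older estimate, the only difference being that the paper simply takes $\rho=r^{1/2}$ instead of optimising the exponent. One cosmetic correction: the normalising constant in your barrier $w$ should be $\max_{|z|=\delta}V(z)$ rather than $\min_{|z|=\delta}V(z)$, so that $w\le 1$ on $\partial B_{\delta}(\tilde y)$ as required for the comparison $u\ge w$ (this mirrors the lower barrier $v^-$ in Lemma \ref{capball}).
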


\begin{proof}
	A small modification of Lemma \ref{capball} and its proof, we have
	\begin{align*}
	\mathrm{cap}(\overline{B_{\delta r}(y)}, B_{2r}(x_0)) \sim 1, \quad \text{for $\delta \in (0, 1/4)$ and $\overline{B_{\delta r}(y)} \subset B_{2r}(x_0)$},
	\end{align*}
	where the comparable constant depends only on $n, \lambda, \Lambda$ and $\delta$. Thus, if $x_0$ satisfies an exterior corkscrew condition, then we have
	\begin{align*}
	\int_0^1 \mathrm{cap}_F(\overline{B_t(x_0)} \setminus \Omega, B_{2t}(x_0)) \frac{\mathrm{d}t}{t} \geq \int_0^1 \mathrm{cap}_F(\overline{B_{\delta t}(y)}, B_{2t}(x_0)) \frac{\mathrm{d}t}{t} \geq \frac{1}{c} \, \int_0^1 \frac{\mathrm{d}t}{t}=\infty,\\
	\int_0^1 \mathrm{cap}_{\widetilde{F}}(\overline{B_t(x_0)} \setminus \Omega, B_{2t}(x_0)) \frac{\mathrm{d}t}{t} \geq \int_0^1 \mathrm{cap}_{\widetilde{F}}(\overline{B_{\delta t}(y)}, B_{2t}(x_0)) \frac{\mathrm{d}t}{t} \geq \frac{1}{c} \, \int_0^1 \frac{\mathrm{d}t}{t}=\infty,
	\end{align*}
	and so $x_0$ is a regular boundary point by the Wiener criterion.
	
	Next, for the second statement, we may assume $f(x_0)=0$ by adding a constant for $f$, if necessary. Since $f$ is resolutive, we can apply the quantitative estimate obtained in Lemma \ref{modulus}:
	\begin{align*}
	\min_{\partial \Omega_{2\rho}}f&+\min_{\partial \Omega}f \cdot \exp\Big(-c\int_r^{\rho} \varphi_{F}(x_0, \Omega^c, t)\frac{\mathrm{d}t}{t}\Big) \leq \inf_{\Omega_r} {H}_f^F\\
	&\leq \sup_{\Omega_r}{H}_f^F \leq \max_{\partial \Omega_{2\rho}}f+\max_{\partial \Omega}f \cdot \exp\Big(-c\int_r^{\rho} \varphi_{\widetilde{F}}(x_0, \Omega^c, t)\frac{\mathrm{d}t}{t}\Big),
	\end{align*}
	Here 
	\begin{enumerate}[(i)]
		\item $f$ is H\"older continuous at $x_0$: there exists a constant $C>0$ such that $|f(x)|=|f(x)-f(x_0)| \leq C|x-x_0|^{\gamma} \leq C\rho^{\gamma}$ for $x \in \partial \Omega_{2\rho}.$
		
		\item $\Omega$ satisfies an exterior corkscrew condition at $x_0$:
		\begin{align*}
		\exp\Big(-c\int_r^{\rho} \varphi_{\widetilde{F}}(x_0, \Omega^c, t)\frac{\mathrm{d}t}{t}\Big) \leq \exp\Big(-c_1\int_r^{\rho} \frac{\mathrm{d}t}{t}\Big)=\Big(\frac{r}{\rho}\Big)^{c_1}.
		\end{align*} 
	\end{enumerate}
	Thus, choosing $\rho=r^{1/2}$, we conclude that the Perron solution $H_f$ is H\"older continuous at $x_0$.
\end{proof}

\begin{remark}[Example]
	In this example, we suppose $n=2$, $F=\mathcal{P}_{\lambda, \Lambda}^+$ with ellipticity constants $0<\lambda<\Lambda$. Then it immediately follows that 
	\begin{align*}
	\widetilde{F}=\mathcal{P}_{\lambda, \Lambda}^-, \quad \alpha^{\ast}(F)=(n-1)\frac{\lambda}{\Lambda}-1<0, \quad \alpha^{\ast}(\widetilde{F})=(n-1)\frac{\Lambda}{\lambda}-1>0.
	\end{align*}
	We consider a domain $\Omega=B_1(0)\setminus\{0\} \subset \mathbb{R}^2$ and its boundary point $0 \in \partial \Omega$.
	\begin{enumerate}[(i)]
		\item Since $\alpha^{\ast}(F)<0$, we know that a single point has non-zero capacity. More precisely, recalling the homogeneous solution for $F$ is given by 
		\begin{align*}
		V(x)=-|x|^{1-\frac{\lambda}{\Lambda}},
		\end{align*}
		there exists a constant $c=c(\lambda, \Lambda)>0$ such that
		\begin{align*}
		{\mathrm{cap}_{{F}} (\{0\}, B_{2t}(0))}=c.
		\end{align*}
		Therefore, we have
		\begin{align*}
		\int_0^{\rho} {\mathrm{cap}_{{F}} (\{0\}, B_{2t}(0))}\frac{\mathrm{d}t}{t} = c\int_0^{\rho} \frac{\mathrm{d}t}{t}=\infty.
		\end{align*}
		In other words, $\Omega^c$ is $F$-thick at $0$.
		
		\item On the other hand, since $\alpha^{\ast}(\widetilde{F}) >0$, we know that a single point is of capacity zero.
		Therefore, we have
		\begin{align*}
		\int_0^{\rho} \frac{\mathrm{cap}_{\widetilde{F}} (\{0\}, B_{2t}(0))}{\mathrm{cap}_{\widetilde{F}} (\overline{B_t(0)}, B_{2t}(0))}\frac{\mathrm{d}t}{t}=0.
		\end{align*}
		In other words, $\Omega^c$ is not $\widetilde{F}$-thick at $0$ and we cannot apply our Wiener's criterion.
		
		\item Let $f_1 \in C(\partial \Omega)$ is a boundary data given by
		\begin{align*}
		f_1(x)=
		\left\{ \begin{array}{ll} 
		1 & \text{if $x=0$},\\
		0 & \text{if $|x|=1$}.
		\end{array} \right.
		\end{align*}
		Then clearly the function $u(x)=1-|x|^{1-\frac{\lambda}{\Lambda}}=1-V(x)$ is the solution for this Dirichlet problem. In particular, in this case, we have
		$\overline{H}_{f_1}=\underline{H}_{f_1}$ (i.e. $f_1$ is resolutive) and 
		\begin{align*}
		\lim_{\Omega \ni x \to 0}H_{f_1}(x)=1=f_1(0).
		\end{align*}
		Alternatively, one can apply Corollary \ref{specialwie} to reach the same conclusion, since $f_1$ attains its maximum at $0$ and $\Omega^c$ is $F$-thick at $0$.
		
		\item Let $f_2 \in C(\partial \Omega)$ is a boundary data given by
		\begin{align*}
		f_2(x)=
		\left\{ \begin{array}{ll} 
		-1 & \text{if $x=0$},\\
		0 & \text{if $|x|=1$}.
		\end{array} \right.
		\end{align*}
			Then since the zero function belongs to $\mathcal{U}_{f_2}$, we have $\overline{H}_{f_2} \leq 0$. Moreover, since $\varepsilon(1-|x|^{-(\frac{\Lambda}{\lambda}-1)}) \in \mathcal{L}_{f_2}$ for any $\varepsilon>0$, we have $\underline{H}_{f_2} \geq \varepsilon(1-|x|^{-(\frac{\Lambda}{\lambda}-1)})$. Letting $\varepsilon \to 0$, we conclude $\underline{H}_{f_2} \geq 0$.
			
		Therefore, we deduce that $\overline{H}_{f_2}=\underline{H}_{f_2}=0$. Furthermore, it follows that 
		\begin{align*}
		\lim_{\Omega \ni x \to 0}H_{f_2}(x)=0 \neq -1=f_2(0),
		\end{align*}
		which implies that $0$ is an irregular boundary point for $\Omega$. 
	\end{enumerate}

\end{remark}

\section{A necessary condition for the regularity of a boundary point}
In this section, we provide the necessity of the Wiener criterion, under additional structure on the operator $F$. Indeed, our strategy is to employ the argument made in \cite{LM85} which proved the necessity of the $p$-Wiener criterion for $p$-Laplacian operator with $p>n-1$. Since the assumption $p>n-1$ was essentially imposed to ensure the capacity of a line segment is non-zero in \cite{LM85}, we begin with finding the corresponding assumptions in the fully nonlinear case. 
 
\begin{lemma}\label{haus1}
	Suppose that $F$ is convex and $\alpha^{\ast}(F)>s$ for some $s>0$. Let $K$ be a compact subset in $B_r (\subset \mathbb{R}^n)$ such that $\mathcal{H}^s(K)<\infty$, where $\mathcal{H}^s$ is the $s$-dimensional Hausdorff measure. Then
	\begin{align*}
	\mathrm{cap}_F(K)=0.
	\end{align*}
\end{lemma}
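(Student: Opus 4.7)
The plan is to exploit that $\alpha^{\ast}(F)>0$ supplies a \emph{positive} homogeneous solution $V$ with $V(x)\sim |x|^{-\alpha^{\ast}}$, and to use translated, rescaled copies of $V$ as admissible competitors for $\mathrm{cap}_F(K,B)$. Convexity of $F$ will let me add finitely many such copies and stay $F$-superharmonic, exactly as in the proof of Lemma \ref{polar}. Fix $B=B_{2r}(x_0)$ with $K\subset B'$ and $y_0=x_0+\tfrac{3}{2}re_1$, and write $V_-:=\min_{|y|=1}V(y)>0$, $V_+:=\max_{|y|=1}V(y)>0$; positivity here comes from $\alpha^{\ast}V>0$ in $\mathbb{R}^n\setminus\{0\}$.

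First I would cover $K$ finely. Given $\delta>0$, since $\mathcal{H}^s(K)<\infty$ I can choose finitely many balls $\{B_{r_i}(x_i)\}_{i=1}^N$ with $x_i\in K$, $r_i<\delta$ and $\sum_i r_i^s\leq \mathcal{H}^s(K)+1$; shrinking $\delta$ I may additionally assume $|y_0-x_i|\geq r$ for all $i$. For each $i$ I set
\[
v_i(x):=\frac{r_i^{\alpha^{\ast}}}{V_-}\,V(x-x_i),
\]
which is a positive classical $F$-solution on $\mathbb{R}^n\setminus\{x_i\}$ and, by the choice of the normalizing factor, satisfies $v_i\geq 1$ on $\partial B_{r_i}(x_i)$. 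The minimum principle then forces $v_i\geq 1$ on $\overline{B_{r_i}(x_i)}$. Extending $v_i(x_i)=+\infty$, the function $v_i$ is $F$-superharmonic on the whole of $B$ (no $C^2$ test function can touch $v_i$ from below at $x_i$, so the supersolution test is vacuous there and Theorem \ref{equiv} applies).

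Next I would form $u:=\sum_{i=1}^N v_i$, which by the convexity of $F$ is again $F$-superharmonic in $B$; it is lower semicontinuous, non-negative, and $\geq 1$ on $K\subset\bigcup\overline{B_{r_i}(x_i)}$, hence admissible for the capacity. Evaluating,
\[
\mathrm{cap}_F(K,B)\leq u(y_0)\leq \frac{V_+}{V_-}\sum_{i=1}^N\frac{r_i^{\alpha^{\ast}}}{|y_0-x_i|^{\alpha^{\ast}}}\leq C\,r^{-\alpha^{\ast}}\sum_{i=1}^N r_i^{\alpha^{\ast}}.
\]
Here the key arithmetic input is $\alpha^{\ast}>s$: combined with $r_i<\delta$ it gives
\[
\sum_{i=1}^N r_i^{\alpha^{\ast}}\leq \delta^{\alpha^{\ast}-s}\sum_{i=1}^N r_i^{s}\leq \delta^{\alpha^{\ast}-s}\bigl(\mathcal{H}^s(K)+1\bigr)\longrightarrow 0\quad\text{as }\delta\to 0^+.
\]
Thus $\mathrm{cap}_F(K,B)=0$, and Corollary \ref{capzero} upgrades this to $\mathrm{cap}_F K=0$.

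The main obstacle is really just the bookkeeping at the singular points: I need each $v_i$ (viewed as $+\infty$ at its pole) to be $F$-superharmonic on all of $B$, and I need the finite sum to remain $F$-superharmonic. The first is automatic through the supersolution/superharmonic equivalence (Theorem \ref{equiv}) once one checks lower semicontinuity, while the second is precisely where the convexity hypothesis on $F$ is indispensable (without it sums of viscosity supersolutions need not be supersolutions). Everything else is a routine Frostman-type covering estimate in the spirit of \cite{LM85}.
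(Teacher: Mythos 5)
Your proposal is correct and follows essentially the same route as the paper's proof: a Frostman-type cover with $\sum_i r_i^s\leq\mathcal{H}^s(K)+1$, competitors $r_i^{\alpha^\ast}V(\cdot-x_i)$ built from the positive homogeneous solution, summation justified by convexity, and the bound $\sum_i r_i^{\alpha^\ast}\leq\delta^{\alpha^\ast-s}(\mathcal{H}^s(K)+1)\to 0$. The only differences are cosmetic (you normalize by $V_-$ rather than assuming $\min_{|y|=1}V=1$, and you spell out the behavior at the poles, which the paper glosses over).
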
 
 
\begin{proof}
	For any $\delta>0$, define
	\begin{align*}
	\mathcal{H}_{\delta}^s(K):=\inf\sum_{i}\,r_i^s,
	\end{align*}
	where the infimum is taken over all countable covers of $K$ by balls $B_i$ with diameter $r_i$ not exceeding $\delta.$ Then since $\sup_{\delta>0}\mathcal{H}_{\delta}^s(K)= \lim_{\delta \to 0}\mathcal{H}_{\delta}^s(K)=\mathcal{H}^s(K) < \infty$ and $K$ is compact, for each $\delta \in (0, r)$, there exist finitely many open balls $\{B_i=B_{r_i}(x_i)\}_{i=1}^N$ such that $r_i <\delta$, $\bigcup_{i=1}^N B_i \supset K$, and
	\begin{align}\label{haus}
	 \sum_{i=1}^N \, r_i^s \leq \mathcal{H}^s(K)+1<\infty.
	\end{align}
	Now we consider the homogeneous solution $V(x)=|x|^{-\alpha^{\ast}} V\Big(\frac{x}{|x|}\Big)$ of $F$. Here we may assume $\min_{|x|=1}V(x)=1$ by normalizing $V$. If we let $W_i(x):=r_i^{\alpha^{\ast}}V(x-x_i)$, then it immediately follows that $W_i$ is non-negative and $F$-superharmonic in $\mathbb{R}^n$, and $W_i(x) \geq 1$ on $B_i$. 
	
	Finally, we let $W:=\sum_{i=1}^NW_i (\geq 0)$. Since $F$ is convex, $W$ is $F$-superharmonic in $\mathbb{R}^n$. Moreover, $W \geq 1$ on $\bigcup_{i=1}^NB_i$ and in particular, $W \geq 1$ on $K$. Therefore, $W \in \Phi_K^1(B_{4r})$ and so
	\begin{align*}
		\mathrm{cap}_F(K, B_{4r}) \leq W(y_0)\leq r^{-\alpha^{\ast}}\max_{|x|=1}V(x) \cdot\sum_{i=1}^Nr_i^{\alpha^{\ast}} \leq r^{-\alpha^{\ast}}\max_{|x|=1}V(x) \cdot (\mathcal{H}^s(K)+1)\,\delta^{\alpha^{\ast}-s},
	\end{align*}
	where we used \eqref{haus} and $\alpha^{\ast}>s$. Letting $\delta \to 0$, we finish the proof.
\end{proof} 
 
Now we prove the partial converse statement of Lemma \ref{haus1}. Indeed, here we only consider the compact set $K$ is given by a line segment $L$, whose Hausdorff dimension is exactly $1$.
\begin{lemma}\label{haus2}
	Suppose that $F$ is concave and $\alpha^{\ast}(F)<1$. Let $L=\{x_0+s e: ar\leq s \leq br\}$ be a line segment in $B_{r}(x_0)$, where $e$ is an unit vector in $\mathbb{R}^n$ and $0<a<b<1$ are constants satisfying $b-a<\frac{1}{2}$. Then
	\begin{align*}
		\mathrm{cap}_F(L, B_{2r})>0.
	\end{align*}
\end{lemma}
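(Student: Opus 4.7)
The plan is to mimic the Lindqvist--Martio strategy from \cite{LM85}: spread the homogeneous solution $V$ of $F$ continuously along $L$ to obtain a bounded $F$-subharmonic potential that serves as a lower barrier for the capacity potential of $L$ in $B_{2r}$. If $\alpha^{\ast}(F) < 0$ the result is immediate, since Lemma \ref{sinpoint} gives $\mathrm{cap}_F(\{z\}, B_{2r}) > 0$ for any $z \in L$ and monotonicity (Lemma \ref{brelot2}(ii)) yields $\mathrm{cap}_F(L, B_{2r}) \geq \mathrm{cap}_F(\{z\}, B_{2r}) > 0$. I therefore specialize to $\alpha^{\ast}(F) \in [0, 1)$ below.

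Define
\[
\Psi(x) := \int_L V(x-y)\, d\mathcal{H}^1(y).
\]
Since $V$ has singularity of order $|z|^{-\alpha^{\ast}}$ near the origin (logarithmic when $\alpha^{\ast}=0$) and $\alpha^{\ast} < 1$, the one-dimensional integral $\int_a^b |s^{\ast}-s|^{-\alpha^{\ast}}\, ds$ converges, so $\Psi$ is well-defined and bounded on $\mathbb{R}^n$, continuous on $\mathbb{R}^n \setminus L$, and has asymptotic behavior $\Psi(x) \to 0$ as $|x| \to \infty$ when $\alpha^{\ast}>0$, or $\Psi(x) \to -\infty$ when $\alpha^{\ast}=0$. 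I approximate $\Psi$ by Riemann sums $\Psi_N(x) = \frac{(b-a)r}{N}\sum_{i=1}^N V(x-y_i)$ with $y_i \in L$ equally spaced. Each $V(\cdot - y_i)$ is smooth and $F$-harmonic off $y_i$, and the super-additivity $F(A+B) \geq F(A) + F(B)$, a direct consequence of the concavity of $F$ together with (F2), yields $F(D^2 \Psi_N) \geq 0$ classically on $\mathbb{R}^n \setminus \{y_1, \dots, y_N\}$. Since $\Psi_N \to \Psi$ locally uniformly on $\mathbb{R}^n \setminus L$, the standard viscosity stability (the subsolution version of Theorem \ref{stability}) makes $\Psi$ $F$-subharmonic on $\mathbb{R}^n \setminus L$.

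The crux is the strict inequality $M_0 := \max_{\partial B_{2r}(x_0)} \Psi < \max_{\partial B_{3r/2}(x_0)} \Psi$. Since $L \subset B_r(x_0)$, $\Psi$ is $F$-subharmonic on $\mathbb{R}^n \setminus \overline{B_{3r/2}(x_0)}$, and the above asymptotic behavior ensures that on every bounded annulus $B_R \setminus \overline{B_{3r/2}(x_0)}$ with $R$ sufficiently large, the supremum is attained on the inner sphere $\partial B_{3r/2}(x_0)$; this gives $M_0 \leq \max_{\partial B_{3r/2}}\Psi$, and equality would, by the strong maximum principle, force $\Psi$ to be constant on $\mathbb{R}^n \setminus \overline{B_{3r/2}(x_0)}$, contradicting its decay/divergence at infinity. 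Setting $M := \sup_{\overline{B_{2r}(x_0)}} \Psi \geq \max_{\partial B_{3r/2}}\Psi > M_0$, the function
\[
v(x) := \frac{\Psi(x) - M_0}{M - M_0}
\]
is $F$-subharmonic on $B_{2r}(x_0)\setminus L$, satisfies $\limsup v \leq 0$ on $\partial B_{2r}(x_0)$ and $\limsup v \leq 1$ on $L$, and hence belongs to $\mathcal{L}_f(B_{2r}(x_0)\setminus L)$ for the boundary datum $f$ of Theorem \ref{cappoten}. Choosing $z_0 \in \partial B_{3r/2}(x_0)$ with $\Psi(z_0) = \max_{\partial B_{3r/2}}\Psi$ yields $v(z_0) > 0$, so $\hat{R}_L^1(B_{2r})(z_0) = \overline{H}_f(z_0) \geq \underline{H}_f(z_0) \geq v(z_0) > 0$, and the Harnack inequality on the sphere $\partial B_{3r/2}(x_0)$ (recorded right after the definition of $\mathrm{cap}_F$) transfers this strict positivity to $y_0 = x_0 + \tfrac{3}{2} r e_1$, giving $\mathrm{cap}_F(L, B_{2r}) > 0$. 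The primary obstacle is the rigorous $F$-subharmonicity of $\Psi$ through the Riemann-sum limit, which is exactly where the concavity hypothesis on $F$ is consumed --- via super-additivity --- mirroring the role of the assumption $p > n-1$ in \cite{LM85}.
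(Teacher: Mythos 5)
Your proof is correct and follows essentially the same strategy as the paper's: superpose translates of the homogeneous solution $V$ along $L$ (your integral $\Psi$ is precisely the limit of the paper's discrete sums $W=\sum_i W_i$ over a $\delta$-net of $L$), use concavity of $F$ via superadditivity to make the superposition $F$-subharmonic, use $\alpha^{\ast}<1$ to keep it bounded near $L$, and compare with the capacity potential before transferring positivity to $y_0$ by Harnack. The only cosmetic differences are that you work with $L$ directly rather than with shrinking ball-coverings $\widetilde{K}_{\delta}$ plus Lemma \ref{capacitable}, and that you obtain positivity at the evaluation point by a soft strong-maximum-principle argument instead of the paper's explicit quantitative estimate.
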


\begin{proof}
	Note that since $L$ is a line segment, for any $\delta>0$, one can cover $L$ by open balls $B_i=B_{3\delta}(x_i)$, $1 \leq i \leq N(\delta)$ where $x_i \in L$, $|x_i-x_j| \geq 2\delta$ whenever $i \neq j$, and $N(\delta) \sim \frac{(b-a)r}{\delta}$. We write such cover by $K_{\delta}:=\bigcup_{i=1}^{N(\delta)}\overline{B_i}$. Recalling Lemma \ref{capacitable} and its proof, for any $\varepsilon>0$, there exist a sufficiently small $\delta>0$ and corresponding cover $K_{\delta}$ such that
	\begin{align*}
	\mathrm{cap}_{F}(K_{\delta}, B_{2r}) \leq \mathrm{cap}_F(L, B_{2r})+\varepsilon.
	\end{align*}  
	If we denote $\widetilde{B}_i:=B_{\delta}(x_i)$ and $\widetilde{K}_{\delta}=\bigcup_{i=1}^{N(\delta)}\overline{\widetilde{B}}_i$, then we have $\widetilde{B}_i$ are pairwise disjoint and
	\begin{align*}
	\mathrm{cap}_F(\widetilde{K}_{\delta}, B_{2r}) \leq \mathrm{cap}_F(L, B_{2r})+ \varepsilon.
	\end{align*}
	On the other hand, for simplicity, we suppose that the homogeneous solution $V$ is given by
	\begin{align*}
	V(x)=|x|^{-\alpha^{\ast}}
	\end{align*}
	and $\alpha^{\ast}(F) \in (0,1)$. Note that if $\alpha^{\ast}<0$, then a single point has a positive capacity (Lemma \ref{sinpoint}) and the result immediately follows. Other cases can be shown by similar argument as in Lemma \ref{haus1}. For each $i=1,2, \cdots, N(\delta)$, write
	\begin{align*}
	W_i(x):=\Big(\frac{|x-x_i|}{\delta}\Big)^{-\alpha^{\ast}} \quad \text{and} \quad W(x)=\sum_{i=1}^{N(\delta)}W_i(x).
	\end{align*}
	Since $F$ is concave, $W$ is $F$-subharmonic in $\mathbb{R}^n \setminus \bigcup_{i=1}^{N(\delta)}\{x_i\}$.
	\begin{enumerate}[(i)]
		\item (On $\partial \widetilde{K}_{\delta}$) For $y \in \partial \widetilde{K}_{\delta}$, let $y \in \partial B_i$ for some $i$. Then for $j \neq i$, we have
		\begin{align*}
		|y-x_j|\geq |x_i-x_j| -|y-x_i|=|x_i-x_j|-\delta,
		\end{align*}
		and so
		\begin{align*}
		W(y) \leq 2(1+2^{-\alpha^{\ast}}+\cdots+N(\delta)^{-\alpha^{\ast}}) \leq 2\Big(1+\int_2^{N(\delta)}\frac{1}{s^{\alpha^{\ast}}}\,\mathrm{d}s\Big) \leq c\, N(\delta)^{1-\alpha^{\ast}}.
		\end{align*}
		Here we used the condition $\alpha^{\ast}<1$.
		
		\item (On $\partial B_{2r}$) For $z \in \partial B_{2r}$, 
		\begin{align*}
		|z-x_i| \geq 2r-br=(2-b)r,
		\end{align*}
		and so
		\begin{align*}
		W(z) \leq \Big(\frac{(2-b)r}{\delta}\Big)^{-\alpha^{\ast}} \cdot N(\delta).
		\end{align*}

	\end{enumerate}
	Therefore, for 
	\begin{align*}
	\widetilde{W}(x):=\frac{W(x)-\Big(\frac{(2-b)r}{\delta}\Big)^{-\alpha^{\ast}} \cdot N(\delta)}{c\, N(\delta)^{1-\alpha^{\ast}}},
	\end{align*}
	we have 
	\begin{align*}
	\text{$\widetilde{W}$ is $F$-subharmonic in $B \setminus \widetilde{K}_{\delta}$, \quad $\widetilde{W} \leq 0$ on $\partial B_{2r}$,\quad and \quad $\widetilde{W} \leq 1$ on $\partial \widetilde{K}_{\delta}.$}
	\end{align*}
	Note that since $\widetilde{K}_{\delta}$ and $B_{2r}$ are regular domains, the capacity potential $\hat{R}_{\widetilde{K}_{\delta}}^1(B_{2r})$ satisfies: 
	\begin{align*}
	\text{$\hat{R}_{\widetilde{K}_{\delta}}^1(B_{2r}) = 0$ on $\partial B_{2r}$,\quad and \quad $\hat{R}_{\widetilde{K}_{\delta}}^1(B_{2r})= 1$ on $\partial \widetilde{K}_{\delta}.$}
	\end{align*} 
	Hence, the comparison principle yields that 
	\begin{align*}
	\hat{R}_{\widetilde{K}_{\delta}}^1(B_{2r}) \geq \widetilde{W} \quad \text{in $B_{2r} \setminus \widetilde{K}_{\delta}.$}
	\end{align*}
	In particular, putting $x=x_0+\frac{3}{2}re$, we conclude that
	\begin{align*}
	|x-x_i| \leq 3r/2-ar=\Big(\frac{3}{2}-a\Big)r,
	\end{align*}
	and so
	\begin{align*}
	\hat{R}_{\widetilde{K}_{\delta}}^1(B_{2r}) \Big(x_0+\frac{3}{2}re\Big) \geq \widetilde{W}\Big(x_0+\frac{3}{2}re\Big) &\geq \frac{\Big[\Big(\frac{(3/2-a)r}{\delta}\Big)^{-\alpha^{\ast}}-\Big(\frac{(2-b)r}{\delta}\Big)^{-\alpha^{\ast}}\Big] \cdot N(\delta)}{c\, N(\delta)^{1-\alpha^{\ast}}}\\
	& \geq c_1 (b-a)^{\alpha^{\ast}} \Big[\Big(\frac{3}{2}-a\Big)^{-\alpha^{\ast}}-\Big(2-b\Big)^{-\alpha^{-\ast}}\Big].
	\end{align*}

	Finally, by applying Harnack inequality for $\hat{R}_{\widetilde{K}_{\delta}}^1(B_{2r})$ on $\partial B_{3r/2}$, we have
	\begin{align*}
	\varepsilon+\mathrm{cap}_F(L, B_{2r}) \geq \mathrm{cap}_F(\widetilde{K}_{\delta}, B_{2r}) \geq c_2\,(b-a)^{\alpha^{\ast}} \Big[\Big(\frac{3}{2}-a\Big)^{-\alpha^{\ast}}-\Big(2-b\Big)^{-\alpha^{-\ast}}\Big]>0.
	\end{align*}
	Since $\varepsilon>0$ is arbitrary, we finish the proof.
\end{proof}

The idea of the previous lemma can be modified to derive the `spherical symmetrization' result:
\begin{lemma}[Spherical symmetrization]\label{symm}
	Suppose that $F$ is concave and $\alpha^{\ast}(F)<1$. Let $K$ be a compact subset in $B_{r}(x_0)$ such that $K$ meets $S(t):=\{x \in \mathbb{R}^n : |x-x_0|=t\}$ for all $t \in (ar, br)$, where $0<a<b<1$ are constants satisfying $b<\frac{1}{4}$. Then there exists a constant $c=c(n, F, a, b)$ such that
	\begin{align*}
	\mathrm{cap}_{F}(K, B_{2r}) \geq c(n, F, a, b)>0.
	\end{align*}
\end{lemma}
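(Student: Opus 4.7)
My plan is to follow the scheme of Lemma \ref{haus2}, now with the points of concentration chosen on the spheres $S(t_i)$ meeting $K$, and to recover a positive lower bound by summing scaled homogeneous solutions and invoking the comparison principle against the capacity potential. First I would dispose of the case $\alpha^{\ast}(F)\le 0$: for $\alpha^{\ast}<0$, Lemma \ref{sinpoint} gives $\mathrm{cap}_F(\{x\},B_{2r})>0$ for any $x\in K$, and monotonicity (Lemma \ref{brelot2}(ii)) finishes the proof; for $\alpha^{\ast}=0$ the construction below still works after replacing $|\cdot|^{-\alpha^{\ast}}$ by the logarithmic homogeneous solution. Accordingly I may assume $0<\alpha^{\ast}<1$ from here on.

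Given $\varepsilon>0$, the proof of Lemma \ref{capacitable} produces a small $\delta\in(0,(b-a)r)$ with $\mathrm{cap}_F(K_{\delta},B_{2r})\le \mathrm{cap}_F(K,B_{2r})+\varepsilon$, where $K_{\delta}:=\{x:d(x,K)\le\delta\}$. Next I would pick a $2\delta$-net $t_0<\dots<t_N$ in $(ar,br)$ with $N\sim(b-a)r/(2\delta)$, choose $x_i\in K\cap S(t_i)$, and set $\widetilde K_{\delta}:=\bigcup_i\overline{B_{\delta}(x_i)}$. The crucial observation is that
\begin{align*}
|x_i-x_j|\ge\bigl||x_i-x_0|-|x_j-x_0|\bigr|=|t_i-t_j|\ge 2|i-j|\delta,
\end{align*}
so the balls $B_{\delta}(x_i)$ are pairwise disjoint and $\widetilde K_{\delta}\subset K_{\delta}\subset B'$ for small $\delta$. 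Normalizing the homogeneous solution so that $V(x)=|x|^{-\alpha^{\ast}}$ (the directional case is handled by an $F$-harmonic sandwich as in Lemma \ref{capball}), I would define $W_i(x):=(|x-x_i|/\delta)^{-\alpha^{\ast}}$ and $W:=\sum_{i=0}^{N}W_i$; each $W_i$ is $F$-harmonic off $x_i$, so concavity of $F$ renders $W$ an $F$-subharmonic function on $\mathbb{R}^n\setminus\{x_0,\dots,x_N\}$, exactly as in Lemma \ref{haus2}.

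The separation $|x_i-x_j|\ge 2|i-j|\delta$ together with $\alpha^{\ast}<1$ would give $W\le C_1(n,\alpha^{\ast})\,N^{1-\alpha^{\ast}}$ on $\partial\widetilde K_{\delta}$, by summing $\sum_{k=1}^{N}k^{-\alpha^{\ast}}$; meanwhile, the triangle inequality at $y_0=x_0+\tfrac{3}{2}re_1$ and at $z\in\partial B_{2r}$ yields
\begin{align*}
W(z)\le N\Big(\tfrac{(2-b)r}{\delta}\Big)^{-\alpha^{\ast}},\qquad W(y_0)\ge N\Big(\tfrac{(3/2+b)r}{\delta}\Big)^{-\alpha^{\ast}}.
\end{align*}
Then $\widetilde W:=(W-\sup_{\partial B_{2r}}W)/\sup_{\partial\widetilde K_{\delta}}W$ satisfies $\widetilde W\le 0$ on $\partial B_{2r}$, $\widetilde W\le 1$ on $\partial\widetilde K_{\delta}$, and is $F$-subharmonic in $B_{2r}\setminus\widetilde K_{\delta}$. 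Since $\widetilde K_{\delta}$ and $B_{2r}$ are regular domains, $\hat R_{\widetilde K_{\delta}}^{1}(B_{2r})$ attains the boundary values $1$ and $0$ continuously, so the comparison principle forces $\hat R_{\widetilde K_{\delta}}^{1}(B_{2r})\ge\widetilde W$ between the two. Evaluating at $y_0$ and using $N\delta\sim(b-a)r$ and $b<1/4$ (which gives $3/2+b<7/4<2-b$, hence $(3/2+b)^{-\alpha^{\ast}}-(2-b)^{-\alpha^{\ast}}>0$), I would arrive at
\begin{align*}
\mathrm{cap}_F(\widetilde K_{\delta},B_{2r})\ge C_1^{-1}\Big(\tfrac{N\delta}{r}\Big)^{\alpha^{\ast}}\bigl[(3/2+b)^{-\alpha^{\ast}}-(2-b)^{-\alpha^{\ast}}\bigr]\ge c(n,F,a,b)>0,
\end{align*}
which combined with monotonicity and the capacitability estimate gives $\mathrm{cap}_F(K,B_{2r})\ge c-\varepsilon$; letting $\varepsilon\to 0$ closes the proof. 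The hardest step will be the uniform bound $W\le C_1 N^{1-\alpha^{\ast}}$ on $\partial\widetilde K_{\delta}$, since it is here that both the radial separation furnished by the spherical hypothesis and the standing assumption $\alpha^{\ast}<1$ are consumed simultaneously.
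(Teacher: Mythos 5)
Your proposal is correct and follows essentially the same route as the paper's own proof: discretize the radii into a $2\delta$-net, pick points of $K$ on the corresponding spheres (whose radial separation forces $|x_i-x_j|\ge 2|i-j|\delta$), sum scaled homogeneous solutions, use concavity of $F$ for subharmonicity of the sum, and compare with the capacity potential of the union of small balls before passing to $K$ via outer capacity. Your explicit treatment of the cases $\alpha^{\ast}\le 0$ and your remark that $b<1/4$ is exactly what makes $(3/2+b)^{-\alpha^{\ast}}-(2-b)^{-\alpha^{\ast}}>0$ are welcome clarifications of points the paper leaves implicit.
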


\begin{proof}
	The proof is similar to the one of Lemma \ref{haus2}. By assumption, we can choose $x_{(t)} \in K \cap S(t)$ for all $t \in (ar, br)$. In particular, for small $\delta>0$, we define $x_i:=x_{(ar+2\delta i)}$ for $i=1,2, \cdots, N(\delta)$ so that 
	\begin{align*}
	ar+2\delta \cdot N(\delta) < br \leq ar+2\delta \cdot(N(\delta)+1).
	\end{align*} 
	Note that $N(\delta) \sim \frac{(b-a)r}{\delta}$. Moreover, for $\delta>0$, we define a set $K_{\delta}$ by
	\begin{align*}
		K_{\delta}=\bigcup_{i=1}^{N(\delta)} \overline{B_i},
	\end{align*}
	where $B_i=B_{x_i}(\delta)$. Again recalling Lemma \ref{capacitable} and its proof, for any $\varepsilon>0$, there exists a sufficiently small $\delta>0$ such that
	\begin{align*}
		\mathrm{cap}_F(K_{\delta}, B_{2r}) \leq \mathrm{cap}_F(K, B_{2r})+\varepsilon.
	\end{align*}
		On the other hand, for simplicity, we suppose that the homogeneous solution $V$ is given by
	\begin{align*}
	V(x)=|x|^{-\alpha^{\ast}}
	\end{align*}
	and $\alpha^{\ast}(F) \in (0,1)$. For each $i=1,2, \cdots, N(\delta)$, write
	\begin{align*}
	W_i(x):=\Big(\frac{|x-x_i|}{\delta}\Big)^{-\alpha^{\ast}} \quad \text{and} \quad W(x)=\sum_{i=1}^{N(\delta)}W_i(x).
	\end{align*}
	Since $F$ is concave, $W$ is $F$-subharmonic in $\mathbb{R}^n \setminus \bigcup_{i=1}^{N(\delta)}\{x_i\}$.
	\begin{enumerate}[(i)]
		\item (On $\partial {K}_{\delta}$) For $y \in \partial {K}_{\delta}$, let $y \in \partial B_i$ for some $i$. Then for $j \neq i$, we have
		\begin{align*}
		|y-x_j|\geq |x_i-x_j| -|y-x_i|=|x_i-x_j|-\delta\geq 2|i-j|\delta-\delta,
		\end{align*}
		and so
		\begin{align*}
		W(y) \leq 2(1+2^{-\alpha^{\ast}}+\cdots+N(\delta)^{-\alpha^{\ast}}) \leq 2\Big(1+\int_2^{N(\delta)}\frac{1}{s^{\alpha^{\ast}}}\,\mathrm{d}s\Big) \leq c\, N(\delta)^{1-\alpha^{\ast}}.
		\end{align*}
		Here we used the condition $\alpha^{\ast}<1$.
		
		\item (On $\partial B_{2r}$) For $z \in \partial B_{2r}$, 
		\begin{align*}
		|z-x_i| \geq 2r-br=(2-b)r,
		\end{align*}
		and so
		\begin{align*}
		W(z) \leq \Big(\frac{(2-b)r}{\delta}\Big)^{-\alpha^{\ast}} \cdot N(\delta).
		\end{align*}
		
	\end{enumerate}
	Therefore, for 
	\begin{align*}
	\widetilde{W}(x):=\frac{W(x)-\Big(\frac{(2-b)r}{\delta}\Big)^{-\alpha^{\ast}} \cdot N(\delta)}{c\, N(\delta)^{1-\alpha^{\ast}}},
	\end{align*}
	we have 
	\begin{align*}
	\text{$\widetilde{W}$ is $F$-subharmonic in $B \setminus {K}_{\delta}$, \quad $\widetilde{W} \leq 0$ on $\partial B_{2r}$,\quad and \quad $\widetilde{W} \leq 1$ on $\partial {K}_{\delta}.$}
	\end{align*}
	Note that since ${K}_{\delta}$ and $B_{2r}$ is regular domains, the capacity potential $\hat{R}_{{K}_{\delta}}^1(B_{2r})$ satisfies: 
	\begin{align*}
	\text{$\hat{R}_{{K}_{\delta}}^1(B_{2r}) = 0$ on $\partial B_{2r}$,\quad and \quad $\hat{R}_{{K}_{\delta}}^1(B_{2r})= 1$ on $\partial {K}_{\delta}.$}
	\end{align*} 
	Hence, the comparison principle yields that 
	\begin{align*}
	\hat{R}_{{K}_{\delta}}^1(B_{2r}) \geq \widetilde{W} \quad \text{in $B_{2r} \setminus {K}_{\delta}.$}
	\end{align*}
	In particular, putting $x=x_0+\frac{3}{2}re_1$, we conclude that
	\begin{align*}
	|x-x_i| \leq 3r/2+br=\Big(\frac{3}{2}+b\Big)r,
	\end{align*}
	and so
	\begin{align*}
	\hat{R}_{{K}_{\delta}}^1(B_{2r}) \Big(x_0+\frac{3}{2}re_1\Big) \geq \widetilde{W}\Big(x_0+\frac{3}{2}re_1\Big) &\geq \frac{\Big[\Big(\frac{(3/2+b)r}{\delta}\Big)^{-\alpha^{\ast}}-\Big(\frac{(2-b)r}{\delta}\Big)^{-\alpha^{\ast}}\Big] \cdot N(\delta)}{c\, N(\delta)^{1-\alpha^{\ast}}}\\
	& \geq c_1 (b-a)^{\alpha^{\ast}} \Big[\Big(\frac{3}{2}+b\Big)^{-\alpha^{\ast}}-\Big(2-b\Big)^{-\alpha^{-\ast}}\Big].
	\end{align*}
	Hence,
	\begin{align*}
	\varepsilon+\mathrm{cap}_F(K, B_{2r}) \geq \mathrm{cap}_F({K}_{\delta}, B_{2r}) \geq c_2\,(b-a)^{\alpha^{\ast}} \Big[\Big(\frac{3}{2}+b\Big)^{-\alpha^{\ast}}-\Big(2-b\Big)^{-\alpha^{\ast}}\Big]>0.
	\end{align*}
	Since $\varepsilon>0$ is arbitrary, we finish the proof.
\end{proof}

Let $E$ be a regular set in a ball $B_{2r}$. Let $u=\hat{R}^1_{E}(B_{2r})$ be the capacity potential. For $\gamma \in (0, 1)$, let
\begin{align*}
A_{\gamma}=\{x \in B_{2r} : u(x) < \gamma \}.
\end{align*} 

\begin{lemma}\label{linecap}
	Suppose that $F$ is concave and $\alpha^{\ast}(F)<1$. Then, there exists a constant $c_1>0$ depending only on $n, \lambda, \Lambda$ such that: if 
	\begin{align*}
	\gamma \geq c_1 \mathrm{cap}_F (E, B_{2r}),
	\end{align*}
	then the set $A_{\gamma}$ contains a sphere $S(t):=\{x \in \mathbb{R}^n: |x-x_0|=t\}$ for some $t \in (r/10, r/5)$.
\end{lemma}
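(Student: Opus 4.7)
The plan is a contradiction argument, coupling the spherical-symmetrization lower bound of Lemma \ref{symm} with the level-set capacity identity of Lemma \ref{est0}. Suppose the conclusion fails: for every $t \in (r/10, r/5)$ the sphere $S(t)$ meets $\{u \geq \gamma\}$. I will show that this collection of witnesses carries a universal capacity lower bound which is incompatible with $\gamma \geq c_1 \mathrm{cap}_F(E, B_{2r})$ once $c_1$ is chosen sufficiently large.

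Concretely, I would first gather the witnesses into
\begin{align*}
K' := \{x \in \overline{B_{r/5}(x_0)} \setminus B_{r/10}(x_0) : u(x) \geq \gamma\},
\end{align*}
which is compact since $u$ is continuous on $B_{2r}$ (using the regularity of $E$). By the contradiction hypothesis $K'$ meets $S(t)$ for every $t \in (r/10, r/5)$, so Lemma \ref{symm} applies with ambient radius $r$ and parameters $a = 1/10$, $b = 1/5$ (which satisfy $0 < a < b < 1$ and $b < 1/4$). Since $F$ is concave with $\alpha^{\ast}(F) < 1$, this yields a universal bound $\mathrm{cap}_F(K', B_{2r}) \geq c_0 > 0$ for some constant $c_0$.

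Next, set $K_\gamma := \{x \in B_{2r} : u(x) \geq \gamma\}$, so that $K' \subset K_\gamma$. To apply Lemma \ref{est0} I need $K_\gamma \subset B_{7r/5}(x_0)$. Since $E \subset B_{7r/5}$ (implicit in the capacity setup), $u$ is $F$-harmonic on $B_{2r} \setminus \overline{B_{7r/5}}$ and vanishes on $\partial B_{2r}$; the maximum principle on this annulus, combined with a Harnack chain connecting $\partial B_{7r/5}$ to the evaluation point $y_0 = x_0 + \tfrac{3}{2}re_1$, gives
\begin{align*}
\sup_{B_{2r} \setminus B_{7r/5}} u \leq C\,u(y_0) = C\,\mathrm{cap}_F(E, B_{2r})
\end{align*}
for a universal constant $C$. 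Choosing $c_1 > C$ therefore forces $\gamma > \sup_{B_{2r} \setminus B_{7r/5}} u$, which places $K_\gamma$ inside $B_{7r/5}(x_0)$ as required.

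Chaining monotonicity (Lemma \ref{brelot2}(ii)) with Lemma \ref{est0}, I then obtain
\begin{align*}
c_0 \leq \mathrm{cap}_F(K', B_{2r}) \leq \mathrm{cap}_F(K_\gamma, B_{2r}) = \frac{1}{\gamma}\,\mathrm{cap}_F(E, B_{2r}),
\end{align*}
whence $\gamma \leq c_0^{-1}\,\mathrm{cap}_F(E, B_{2r})$, contradicting the hypothesis as soon as $c_1 > c_0^{-1}$. Taking $c_1 := \max(C+1,\, 2/c_0)$ handles both the localization of $K_\gamma$ and the final contradiction. The main obstacle is precisely the containment $K_\gamma \subset B_{7r/5}$: without it Lemma \ref{est0} cannot be invoked, and the form of the hypothesis $\gamma \geq c_1\,\mathrm{cap}_F(E, B_{2r})$ is calibrated exactly so that the Harnack/maximum-principle bound produces this containment.
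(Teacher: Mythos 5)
Your architecture coincides with the paper's: contradiction, Lemma \ref{symm} with $a=1/10$, $b=1/5$ for the universal lower bound, Lemma \ref{est0} for the upper bound, and $c_1$ chosen large. The one step you add beyond the paper --- forcing $K_\gamma\subset B_{7r/5}$ so that Lemma \ref{est0} applies --- is precisely where your argument has a genuine gap. The claimed bound $\sup_{B_{2r}\setminus B_{7r/5}}u\leq C\,u(y_0)$ with a universal $C$ is false: $u$ is $F$-harmonic only in $B_{2r}\setminus\overline{E}$, and $E$ may come arbitrarily close to $\partial B_{7r/5}$, so there is no Harnack chain with universally controlled constants joining $\partial B_{7r/5}$ to $y_0$. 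Concretely, for $\alpha^{\ast}(F)\in[0,1)$ take $E=\overline{B_{\delta}(w)}$ with $w$ at distance $2\delta$ from $\partial B_{7r/5}$: comparison with the capacity potential of $\overline{B_{\delta}(w)}$ in $B_{4\delta}(w)$ shows $u\geq c>0$ at the nearest point of $\partial B_{7r/5}$, while $u(y_0)=\mathrm{cap}_F(E,B_{2r})\to0$ as $\delta\to0$. Hence for $\gamma=c_1\,\mathrm{cap}_F(E,B_{2r})$ the set $K_\gamma$ genuinely escapes $B_{7r/5}$, and no choice of $c_1$ repairs the containment. (Your step would be sound if one additionally assumed $\overline{E}\subset B_{\theta r}$ for some fixed $\theta<7/5$, as happens in the application to Theorem \ref{wienernec}, but not for general $E\subset B'$.)

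The containment is, however, dispensable: the contradiction only needs the one-sided inequality
\begin{align*}
\hat{R}^1_{K_\gamma}(B_{2r})(y_0)\leq\frac{1}{\gamma}\,u(y_0)=\frac{1}{\gamma}\,\mathrm{cap}_F(E,B_{2r}),
\end{align*}
which is exactly step (i) of the proof of Lemma \ref{est0}: by (F2) the function $u/\gamma$ is $F$-superharmonic, non-negative, and $\geq1$ on $K_\gamma$, so $u/\gamma\in\Phi^1_{K_\gamma}(B_{2r})$ and $\hat{R}^1_{K_\gamma}\leq R^1_{K_\gamma}\leq u/\gamma$; no localization of $K_\gamma$ enters. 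Chaining this with monotonicity $\mathrm{cap}_F(K',B_{2r})\leq\hat{R}^1_{K_\gamma}(B_{2r})(y_0)$ and with the lower bound $c_0$ from Lemma \ref{symm} yields $\gamma\leq c_0^{-1}\,\mathrm{cap}_F(E,B_{2r})$ and the desired contradiction for $c_1>c_0^{-1}$. (For what it is worth, the paper's own proof applies Lemma \ref{est0} to $E_\gamma$ without verifying $E_\gamma\subset B'$ either; the honest repair is the one just described, not the Harnack bound.)
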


\begin{proof}
	For $0<\gamma<1$, let $E_{\gamma}:=\{x \in B_{2r}:u(x) \geq \gamma\}.$ We argue by contradiction: suppose that $A_{\gamma}$ does not contain any $S(t)$ for $t \in (r/10, r/5)$. Then the set $E_{\gamma}$ meets $S(t)$ for all $t \in (r/10, r/5)$ and  we have
	\begin{align*}
		\mathrm{cap}_F(E_{\gamma}, B_{2r}) \geq c(n, F)>0,
	\end{align*}
	by employing Lemma \ref{symm} for $a=1/10$ and $b=1/5$. 
	
	On the other hand, by Lemma \ref{est0}, we have
	\begin{align*}
	\mathrm{cap}_F(E_{\gamma}, B_{2r})=\frac{1}{\gamma}\mathrm{cap}_F(E, B_{2r}).
	\end{align*}
	Combining two estimates above, we obtain
	\begin{align*}
	\gamma \leq \frac{1}{c(n, F)} \, \mathrm{cap}_F(E, B_{2r}).
	\end{align*}
	Therefore, by choosing $c_1=\frac{1}{c(n, F)}+1$, we arrive at a contradiction.
\end{proof}

Now we are ready to prove the necessity of the Wiener criterion, Theorem \ref{wienernec}.
\begin{proof}[Proof of Theorem \ref{wienernec}]
	For simplicity, we write $B_r=B_r(x_0)$.
	Suppose that $\Omega^c$ is not $F$-thick at $x_0 \in \partial \Omega$, i.e.
	\begin{align*}
	\int_0^{1} {\mathrm{cap}_{{F}} (\overline{B_t} \setminus \Omega, B_{2t})}\frac{\mathrm{d}t}{t}<\infty.
	\end{align*}	
	For $\varepsilon>0$ to be determined, choose $r_1>0$ small enough so that 
	\begin{align*}
		\int_0^{r_1}{\mathrm{cap}_{{F}} (\overline{B_t} \setminus \Omega, B_{2t})}\frac{\mathrm{d}t}{t}<\varepsilon.
	\end{align*}
	Set $r_{i+1}=r_i/2$ and 
	\begin{align*}
	a_i={\mathrm{cap}_{{F}} (\overline{B_{r_i}} \setminus \Omega, B_{2r_i})}.
	\end{align*}
	Applying Lemma \ref{capest}, 
	\begin{align*}
		\sum_{i=2}^{\infty}a_i \leq c_0(n, \lambda, \Lambda)\,\varepsilon.
	\end{align*}
	Next, by Lemma \ref{exhaust} and Lemma \ref{capacitable}, for each $i$, choose a regular domain $E_i$ such that $\overline{B_{r_i}} \setminus \Omega \subset E_i$ and
	\begin{align*}
	b_i:={\mathrm{cap}_{{F}} (E_i, B_{2r_i})} <a_i+\varepsilon \cdot 2^{-i}.
	\end{align*}
	Then we have
	\begin{align*}
	\sum_{i=2}^{\infty}b_i \leq (c_0+1) \, \varepsilon
	\end{align*}
	and so $b_i \leq (c_0+1) \, \varepsilon$ for $i=2,3,\cdots.$	
	Moreover, let $u_i:=\hat{R}^1_{E_i}(B_{2r_i})$ be the capacity potential. By Lemma \ref{linecap}, for $\gamma_i=c_1 \cdot b_i$, the set 
	\begin{align*}
	A_i=\{x \in B_{2r_i} :  u_i(x) <\gamma_i\}
	\end{align*}
	contains $S(t_i)$ for some $t_i \in (r_i/10, r_i/5)$. Now by selecting $\varepsilon=\frac{1}{2(c_0+1)c_1}>0$, we have $\gamma_i<1$. In particular, since $u_2=1$ on $E_2$ and $S(t_2) \subset A_2$, we conclude that $S(t_2) \subset \Omega$.
	
	Next, let $f \in C(\partial \Omega)$ be the boundary function defined by
	\begin{align*}
	f(x)=
	\left\{ \begin{array}{ll} 
	1 & \text{if $x \in B_{t_2} \cap \partial \Omega$},\\
	0 & \text{if $x \in \partial \Omega \setminus B_{t_2}$}.
	\end{array} \right.
	\end{align*}
	Then we have the following results for the lower Perron solution $\underline{H}_f=\underline{H}_f(\Omega)$:
	\begin{enumerate}[(i)]
		\item $\underline{H}_f \not\equiv 1$: Choose $r>0$ large enough so that $\Omega \subset B_r$. Moreover, set a domain $\Omega_0:=B_{r} \setminus (B_{t_2} \cap \Omega)$ and a boundary function $f_0 \in C(\partial \Omega_0)$ by
		\begin{align*}
		f_0(x)=
		\left\{ \begin{array}{ll} 
		1 & \text{if $x \in B_{t_2} \cap \partial \Omega$},\\
		0 & \text{if $x \in \partial B_r$}.
		\end{array} \right.
		\end{align*} 
		Then since $B_r$ is regular, we have $\overline{H}_{f_0}(\Omega_0)<1$ in $B_r \setminus B_{t_2}$. On the other hand, for any $v \in \mathcal{L}_f(\Omega)$ and $w \in \mathcal{U}_{f_0}(\Omega_0)$, one can check that $v \leq w$ in $\Omega$ using the comparison principle. Therefore, we conclude that $\underline{H}_f(\Omega) \leq \overline{H}_{f_0}(\Omega_0)$ and so $\underline{H}_f(\Omega) \not\equiv 1$.
		
		\item $\max_{S(t_2)}\underline{H}_f=:M<1$: This is an immediate consequence of the strong maximum principle for $\underline{H}_f$ and part (i).
	\end{enumerate}
	For $u:=\frac{\underline{H}_f-M}{1-M}$ which is $F$-harmonic in $\Omega$ and $u \leq 0$ in $S(t_2)$, we claim that 
	\begin{align} \label{claim}
	\liminf_{\Omega \ni x \to x_0}u(x) < \frac{1}{2}.
	\end{align}
	Indeed, since $S(t_2) \subset B_{r_3}$ and $E_3$ is a regular domain, we have
	\begin{align*}
	u(x) \leq 0 \leq \liminf_{y \to x}u_3(y) \quad \text{for any $x \in \partial B_{t_2}=S(t_2)$}, \\
	\limsup_{y \to x} u(y) \leq 1 = \liminf_{y \to x}u_3(y) \quad \text{for any $x \in \partial E_3$}.
	\end{align*}
	Thus, the comparison principle yields that $u \leq u_3$ in $B_{t_2} \setminus E_3$. In particular, since $S(t_3) \subset A_3$, we observe that 
	\begin{align*}
	u \leq u_3 <\gamma_3 \quad \text{on $S(t_3)$.}
	\end{align*}
	Iterating this argument (for example, consider $u-\gamma_3$ instead of $u$), we conclude that
	\begin{align*}
	u \leq \sum_{k=3}^{i}\gamma_k\leq\sum_{k=3}^{\infty} \gamma_k= c_1 \cdot \sum_{k=3}^{\infty} b_i \leq c_1 \, (c_0+1) \, \varepsilon=\frac{1}{2}	
	 \quad \text{on each $S(t_i)$},
	\end{align*}
	which leads to \eqref{claim}. 
	
	Finally, recalling the definition of $u$, the estimate \eqref{claim} is equivalent to 
	\begin{align*}
	\liminf_{\Omega \ni x \to x_0} \underline{H}_f(x) < 1=f(x_0),
	\end{align*}
	which implies that $x_0 \in \partial \Omega$ is an irregular boundary point.
\end{proof}

\noindent {\bf Acknowledgement:} 
Ki-Ahm Lee was supported by the National Research Foundation of Korea (NRF) grant funded by the Korea government (MSIT): NRF-2021R1A4A1027378.  Se-Chan Lee is supported by Basic Science Research Program through the National Research Foundation of Korea (NRF) funded by the Ministry of Education (2022R1A6A3A01086546).

\end{document}